\def\argmin{\mathop{\rm argmin}}
\newcommand{\bfi}{\bfseries\itshape}
\newcommand{\vc}[1]{\mbox{\textbf{{$\mathsf #1$}}}}
\newcommand{\defeq}{\mbox{ \ $\overset{\text{def}}{=}$ \ }}
\newcommand{\figspace}{\vspace{-1cm}}
\newcommand{\vr}[1]{\mbox{$\bm{#1}$}}
\newtheorem{define}{Definition}
\title{Geometric Numerical Integration \\ of Inequality Constrained, Nonsmooth Hamiltonian Systems}
\author{Danny M. Kaufman$^{\dagger,\ddag}$ and Dinesh K. Pai$^\ddag$ \\ {\tiny $\dagger$ Columbia University \quad $\ddag$ University of British Columbia}}
\begin{document}

\setlength{\tabcolsep}{0.3mm}

\maketitle

\begin{abstract} We consider the geometric numerical integration of Hamiltonian systems subject to both equality and \emph{``hard'' inequality} constraints. As in the standard geometric integration setting, we target long-term structure preservation. We additionally, however, also consider invariant preservation over persistent, simultaneous and/or frequent boundary interactions. Appropriately formulating geometric methods to include such conditions has long-remained challenging due the inherent nonsmoothness they impose. To resolve these issues we thus focus both on symplectic-momentum preserving behavior \emph{and} the preservation of additional structures, unique to the inequality constrained setting. Leveraging discrete variational techniques, we construct a family of geometric numerical integration methods that not only obtain the usual desirable properties of momentum preservation, approximate energy conservation and equality constraint preservation, but also enforce multiple simultaneous inequality constraints, obtain smooth unilateral motion along constraint boundaries and allow for both nonsmooth and smooth boundary approach and exit trajectories. Numerical experiments are presented to illustrate the behavior of these methods on difficult test examples where both smooth and nonsmooth active constraint modes persist with high frequency.
\end{abstract}

\section{Introduction}

Recent attention has focused on geometric numerical integration methods that closely preserve invariants of continuous systems~\citep{Sanz94,Hairer03,Leimkuhler04}. In particular, symplectic-momentum preserving integration schemes have been successfully applied over a wide range of applications. In the unconstrained setting, these methods exactly preserve the symplectic form and momenta, and maintain good long term energetic behavior by approximately conserving energy (up to an additive constant) over long spans of simulation. 

While extensions of symplectic methods to equality constrained systems have been extensively investigated~\citep{Hairer03,Leimkuhler04}, appropriately formulating symplectic methods to include inequality constraints continues to remain a challenging problem that has 
received limited attention in the literature. 
The difficulty in treating these latter cases stems from the inherent nonsmoothness imposed by such conditions.  \citet{Stewart00}, in particular, notes that, despite good conservative properties in the smooth setting, symplectic methods do not necessarily translate easily to nonsmooth problems. Direct extensions of existing symplectic methods (e.g., implicit midpoint method, Newmark methods, 
etc.) loose their good energetic properties and often generate surprisingly nondeterministic errors when subject to inequality constraints~\citep{Stewart00}.

In this work we consider Hamiltonian systems subject to both equality and \emph{inequality} constraints. As in the standard geometric integration setting, we target long-term structure preservation. In the inequality constrained setting, however, we additionally consider 
structure preservation over persistent, simultaneous and/or frequent boundary interactions.

With these goals in mind we focus both on symplectic-momentum preserving behavior \emph{and} the preservation of additional structures, unique to the inequality constrained setting.
In particular, we develop geometric numerical integration methods that \emph{not only} obtain the usual desirable properties of momentum preservation, approximate energy conservation and equality constraint preservation, \emph{but also} enforce many simultaneous inequality constraints, obtain smooth unilateral motion along constraint boundaries and allow for both nonsmooth and smooth boundary approach and exit trajectories.

\subsection{Problem Statement}
\label{sec:problem_statement}

We consider the general case of a Hamiltonian system subject to generalized constraints that restrict system configurations, $\vc q$, to an \emph{admissible} set, $\vc A$ (i.e., $\vc q \in \vc A$). In general, this admissible set 
can be both nonconvex and nonsmooth.

More concretely, when the admissible set is given by a manifold, it will often be defined by a set of constraint equations of the form
\begin{equation}
\label{eq:equality}
\vc f(\vc q) = (f_0(\vc q), ... , f_n(\vc q))^T = 0.
\end{equation}
For these cases the resulting holonomic Hamiltonian system reduces to a set of differential algebraic equations (DAE) for which a wide range of existing geometric approaches may be suitable~\citep{Hairer03,Leimkuhler04}. 
 
In the following, we consider generalizations where the admissible set, $\vc A$, can always be described by both a set of constraint equalities (as above) \emph{and} constraint inequalities, given by functions of the configuration variables,
\begin{equation}
\label{eq:inequality}
\vc g(\vc q) = (g_0(\vc q), ... , g_m(\vc q))^T \geq 0.
\end{equation}

We start by considering the underlying, unconstrained system, with its \emph{natural} Hamiltonian\footnote{Throughout, to simplify discussion and visualization, we consider the case of separable Hamiltonians with a flat metric on configuration. } and Lagrangian functions, 
\begin{align}
H(\vc q, \vc p) = \frac{1}{2} \vc p^T \vc M^{-1} \vc p + V(\vc q) \quad \text{and} \quad 
L(\vc q, \dot{\vc q}) = \frac{1}{2} \dot{\vc q}^T \vc M \dot{\vc q} - V(\vc q).
\end{align}

We can then compactly include constraint forces by adding an additional potential term that, when extremized, will penalize all nonadmissible configurations to ensure constraint compliance. In particular, to enforce so-called ``hard'' or ``exact'' constraints it is standard to consider the most extreme penalization available; we apply the extended value indicator function that \emph{infinitely} penalizes noncompliance.
For an arbitrary admissible set $\vc A$, the indicator penalty function is given by
\begin{equation}
    I_{\vc{A}}(\vc x) = \left\{
\begin{array}{l}
        0, \> \> \vc x \in \vc{A} \\
      \infty , \> \vc x \notin \vc{A}. \end{array}  \right.
\end{equation}
The full, nonsmooth, inequality constrained, Hamiltonian and Lagrangian formulations for the system are then obtained 
by augmenting the unconstrained system's natural potential function, $V$, with the indicator function on the admissible set, $I_{\vc A}$~\citep{Clarke}.

The corresponding generalized equations of motion for the constrained system are then given by the Euler-Lagrange differential inclusion (DI), 
\begin{equation}
\label{eq:DIEL}
\vc M \ddot{\vc q} + \nabla V(\vc q) \in -\partial I _{\vc A}(\vc q).
\end{equation}
This inclusion, obtained from the application of $\partial$, the generalized gradient operator~\footnote{Note that here and in the following we reserve the $\partial$ notation to exclusively designate the generalized gradient operator~\citep{RockWets98}.}, to the nonsmooth penalty function~\citep{RockWets98}, indicates that feasible constraint forces must lie in the inward pointing normal cone to the admissible set at $\vc q$, given by $-\partial I _{\vc A}(\vc q)$. 
On the interior of the feasible set the normal cone is trivially the origin. In this case the
Euler-Lagrange DI simply reduces to the standard unconstrained equations of motion. Elsewhere, on the boundary, where constraint forces may be necessary, the normal cone is 
is given by a nontrivial, negative span of the active constraint gradients at $\vc q$. See Figure \ref{fig:admissibleSet}. 

\begin{figure}
[h]
\centering 
\hspace{-14mm}
\includegraphics[height=0.25\hsize]{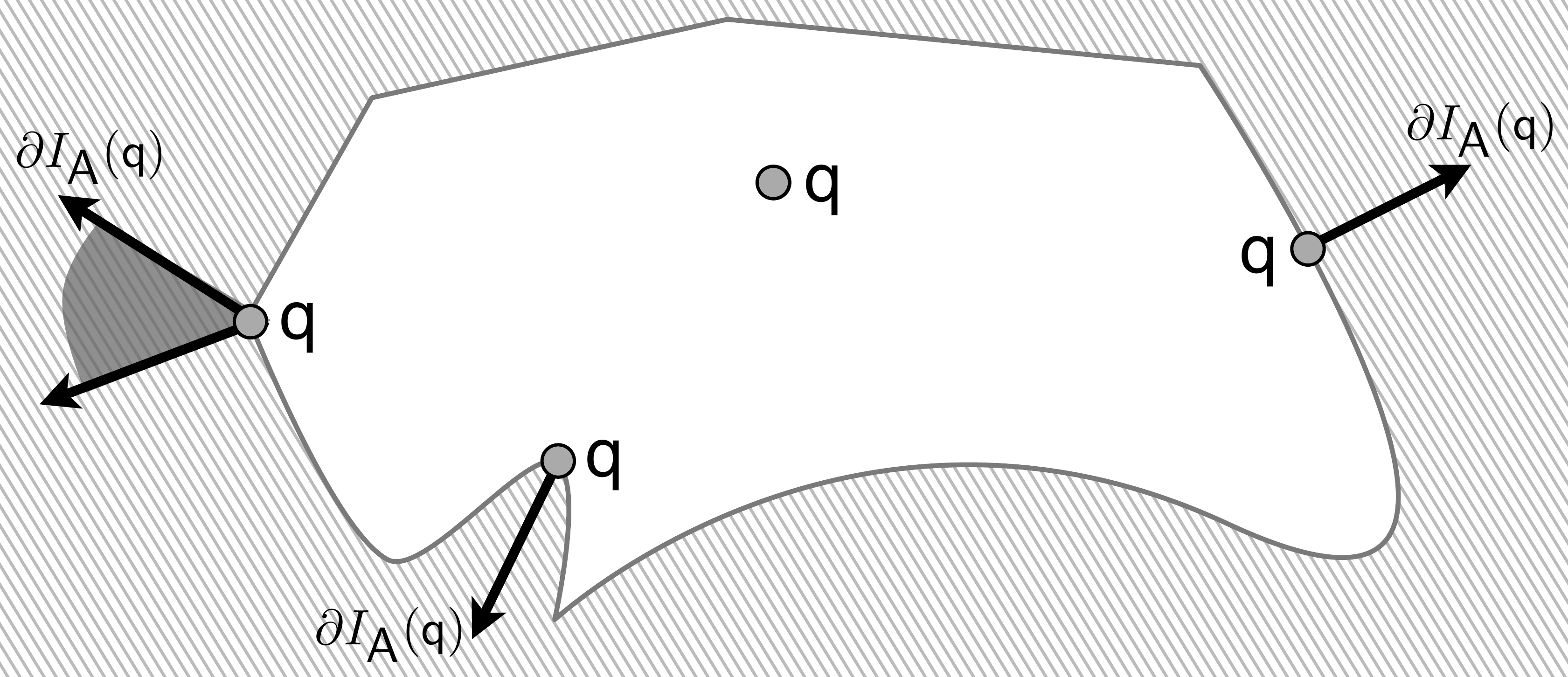}
\caption{ An example of an admissible set and a few evaluations of its associated normal cones. \label{fig:admissibleSet}}
\end{figure}

Our problem is then to derive geometric numerical schemes that integrate the generalized Euler-Lagrange system in  (\ref{eq:DIEL}) forward in time while preserving momenta \emph{and} maintaining the good approximate energy preserving properties that are standard for symplectic integration schemes in the unconstrained setting~\citep{Hairer03}. 
In the inequality constrained setting, however, there is additional critical behavior to consider:

\begin{enumerate}
\item \textbf{ Smooth trajectories along the admissible set boundary.}  
Consider, as a simple example, a mass-particle moving with a tangential velocity across a smooth region of the boundary (Figure \ref{fig:traj_ex} (a)). If a force is pushing
the particle into the boundary, the particle's trajectory should remain smooth along the boundary. No upward bounce or other type of normal motion should result. Likewise, unless dissipative forces are additionally applied, constraints should not apply ``boundary capturing''-type forces that slow the particle's tangential motion. Nevertheless, existing symplectic methods generally fail in one or both of these categories. 
This has important implications 
since, in the general setting,
many important physical phenomena are covered by the abstraction of smooth boundary motion. Such smooth boundary systems include oscillatory elastica resting on foundations, twisted elastic knots (such as DNA filaments), mechanical and biological grasping models, masonry structures, granular flow, as well as many other boundary relative systems that are important across structural engineering, robotics, mechanics, biology and other critical application areas.

\item \textbf{Nonsmooth exit and approach trajectories.}
Many trajectories in the hard constraint context \emph{can not be resolved by finite forces}. For instance, consider a mass-particle approaching a smooth boundary in the normal direction (Figure \ref{fig:traj_ex}  (b)). To resolve the impact, the mass-particle's trajectory must be reflected about the surface normal and thus we require a nonsmooth transition. More generally, whenever a trajectory is non-tangential to the admissible set boundary, an instantaneous nonsmooth jump in state is demanded. Nevertheless many symplectic-based approaches require, by construction, that constraints always be resolved by forces. Nonsmooth motion, however, is an intrinsic aspect of many important physical models applied to better understand the complex behaviors exhibited by mechanical collisions, multi-impact systems, shock-wave propagation in granular media, molecular dynamics (using hard-sphere constraints), geophysical phenomena (e.g., earthquakes, iceberg-calving, etc.) as well as other difficult, nonlinear systems. As such, appropriately resolving nonsmooth motion is likewise critical.

\item \textbf{Smooth exit and approach trajectories.}
Consider once again the simple example of a mass-particle traveling smoothly along a boundary subject to a force pushing into the boundary (Figure \ref{fig:traj_ex}  (c)). If the particle encounters a sufficiently large concavity, it must be able to ``break contact'' and leave the surface. Likewise, if a similar mass-particle is on a ballistic trajectory, that approaches the boundary tangentially (Figure \ref{fig:traj_ex}  (d)), it must not bounce back off, but instead maintain a smooth on-surface trajectory. More generally, smooth trajectories, on the admissible set boundary, should not, in any way, be constrained to remain on the boundary, while initially off-boundary trajectories, that encounter the boundary tangentially, should remain smooth. 
\end{enumerate}

\begin{floatingfigure}[r]{74mm}
\vspace{-4mm}
\hspace{-4mm}
\begin{tabular}{cccc}
\includegraphics[width=16mm]{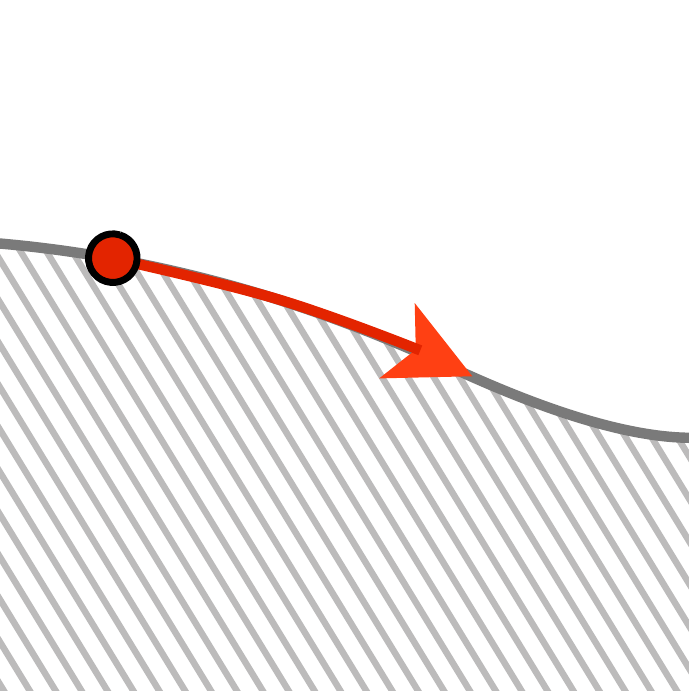}  &
\includegraphics[width=16mm]{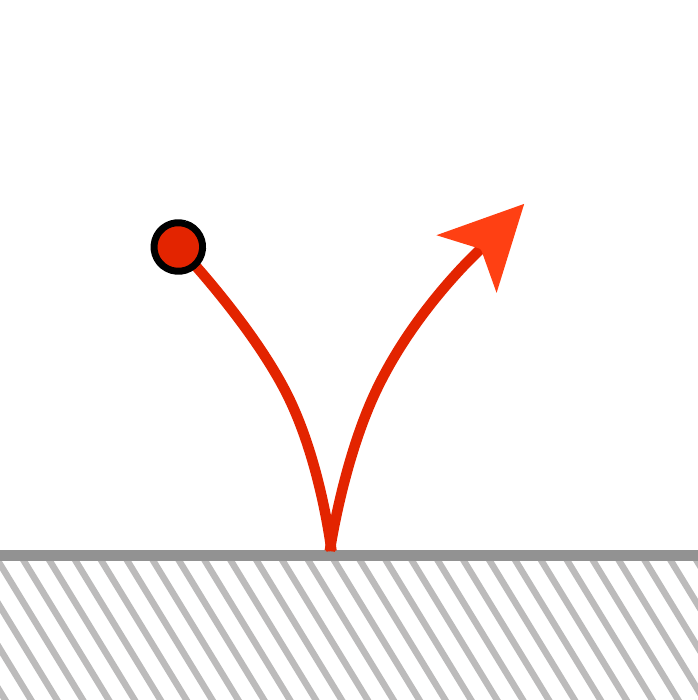}  &
\includegraphics[width=16mm]{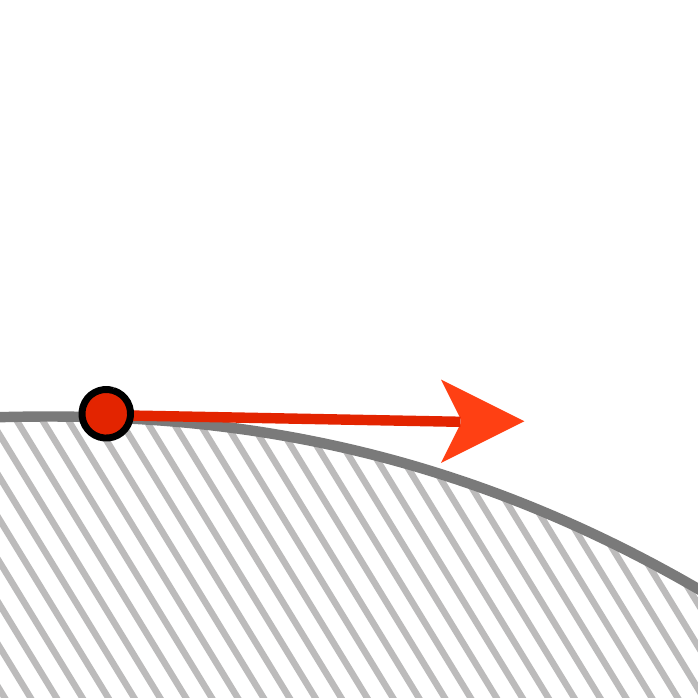} &
\includegraphics[width=16mm]{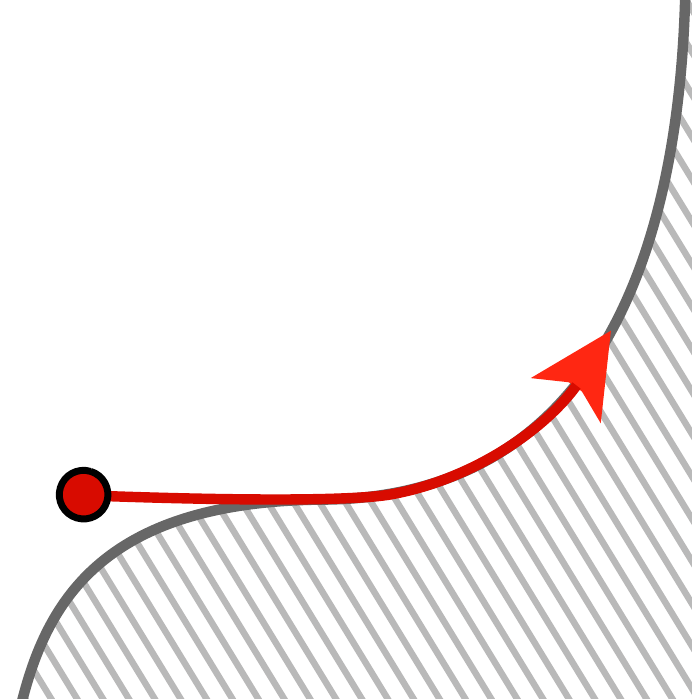}\\
{\small (a)} & {\small (b)}  &  {\small (c)} &  {\small (d)}\\
\end{tabular}
\vspace{-2mm}
\caption{Examples of (a) smooth-boundary, (b) nonsmooth, (c) smooth-exiting and (d) smooth-approaching trajectories. \label{fig:traj_ex}}
\vspace{8mm}
\end{floatingfigure}
%
%
Finally, we note that many fundamental applications of inequality constrained systems (e.g., granular systems, biomechanical locomotion, structural engineering) intrinsically \emph{combine} both smooth and nonsmooth modes and thus require the appropriate resolution of both modes within the same 
framework. 
\vspace{3mm}
\subsection{Overview}
In the following sections we start by discussing related approaches (\S\ref{sec:related}) and then begin to incrementally construct our numerical integration methods, step by step. We first consider the discrete variational perspective (\S\ref{sec:VAR}) and note that a nonsmooth, discrete Hamilton's Principle for unilateral constraints has not been previously considered. To address this gap we propose a Discrete Nonsmooth Hamilton's Principal that leads to a Discrete Euler-Lagrange Inclusion (DELI) formulation (\S\ref{sec:DELI}). We then observe that standard discrete variational structure is not sufficient, on its own, to define a well-posed integrator in this setting (\S\ref{sec:under}). The proposed DELI formulation thus provides a framework for numerical integration in addition to which further structure is required to compose a fully specified integration method.
To instantiate the first such DELI-based method we next consider the structure-preserving behavior of both smooth (\S\ref{sec:smooth_motion}) and nonsmooth (\S\ref{sec:nonsmooth_motion}) time-continuous constrained trajectories. With these observations in place, we then return to the time-discrete problem (\S\ref{sec:gcdvi}), and show that  introducing time-discrete analogues of smooth (\S\ref{sec:discr_smooth}) and nonsmooth (\S\ref{sec:discr_nonsmooth}) boundary structures to DELI allows us to construct a corresponding, symplectic-momentum preserving, smooth-discrete integrator and a momentum preserving, nonsmooth-discrete integrator with observed good long-term energy behavior. Finally, in \S\ref{sec:full}, we derive a discrete, generalized variational integrator (GVI) for generalized inequality-equality constrained systems that resolves both smooth and nonsmooth modes. Further numerical examples illustrating these methods are presented and discussed in \S\ref{sec:ex}.

\subsection{Related Work}
\label{sec:related}

To date, approaches for the structure preserving integration of such inequality constrained, nonsmooth Hamiltonian systems can effectively be divided into two strategies:

\begin{figure}
[h]
\centering
\hspace{-14mm}
\includegraphics[height=0.325\hsize]{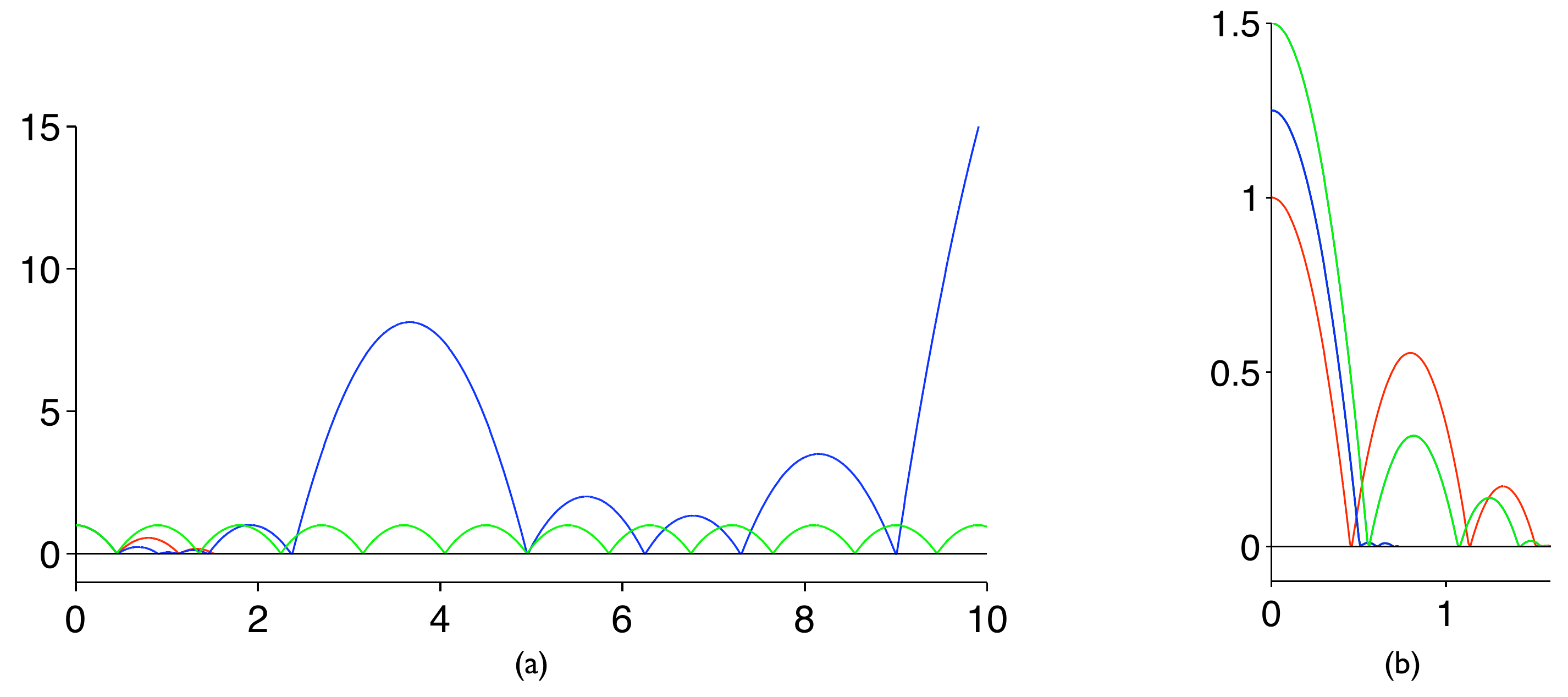}
\caption{{\bf Point-Mass Example:} In these figures we examine the behavior of direct substitution methods on the simple example of a one-dimensional point-mass subject to gravity and a single ground plane constraint at zero. We plot position along the y-axis and time along the x-axis. All examples shown here begin with implicit midpoint as the base, unconstrained numerical method.  In {\bf (a)}, starting with $q(0) =1$, $p(0) =0$, we plot the trajectories of the midpoint-constrained direct method in blue, the endpoint-constrained direct method in red, and
the exact solution's trajectory is plotted in green.
Note, as well, that all 
trajectories
are indentical until the first impact. In {\bf (b)} we plot the the trajectories of the end-point constrained direct method for three small perturbations in initial position. Note that while the endpoint-constrained direct method generates fully dissipative solutions for all cases, the degree of dissipation varies in an energetically inconsistent fashion. \label{fig:1D_particle_gravity}}
\end{figure}

\subsubsection{Direct-Substitution Methods} %
\label{sec:direct_methods}
Energy-momentum and symplectic-momentum methods have been extended to the inequality case by 
the direct substitution of a nonsmooth constraint force~\citep{Laursen97,Kane99,Stewart00,Laursen02,Pandolfi02,Deuflhard07,Khenous08,KrauseWolloth09,Doyen11}. Following the time continuous case discussed above, 
the extended value Hamiltonian for inequality-constrained systems is naturally composed by concatenating the unconstrained 
system's potential with the extended value indicator function. Nonsmooth extensions of standard geometric methods are then composed by direct substitution of the obtained nonsmooth constraint force, $-\partial I_{\vc A} (\vc q)$, as a standard force, handled as dictated by each numerical method.
(See \S \ref{sec:directEx} for examples.) 

Unfortunately, as foreshadowed above, direct substitutions generally destroy energy conservation and generate time-step dependent, nondeterministic behaviors when applied to symplectic methods. In particular, as we prove in Appendix B,
\emph{all direct-substitution, nonsmooth, one-step, symplectic methods can not, in general, preserve, either approximately, nor exactly, the energy of the Hamiltonian system}.
In practice, this results in undesirable time-step-dependent and position-based restitution errors that can produce \emph{both} energy growth \emph{and} dissipation behaviors, while momentum similarly drifts. 

Energy-momentum based direct methods and a variety of extensions and stabilizations are similarly an active and promising area of research~\citep{Doyen11}. Although comparable behaviors to direct symplectic schemes are currently observed in practice~\citep{Doyen11}, drift and stability properties of these methods remain under investigation.

\paragraph{\bf Example: A One Degree of Freedom Particle System}

To illustrate some of these issues consider the following simple example of 
a one dimensional particle with mass, $m=1$, configuration, $\vc q \in \mathbb{R}$, subject to a simple unilateral constraint $\vc g( \vc q) = \vc q \geq 0$, under gravity such that $V(\vc q) = 9.8 \vc q$. 
(Also see Figure \ref{fig:1D_particle_gravity}.)
\citet{Stewart00} uses this simple example to illustrate some of the potential issues involved in applying direct methods to existing symplectic integration methods. Poor energy conservation behaviors are observed for this case by the author for both the (symplectic) implicit midpoint rule~\citep{Hairer03} and the IMEX Newmark scheme from~\citet{Kane99}. In particular, \citet{Stewart00} notes that the energy of these systems is not conserved and that the effective coefficient of restitution varies with state for both methods.

We start by dropping the particle from a height of $q(0) =1$. In Figure \ref{fig:1D_particle_gravity}(a), we plot obtained position for a range of algorithms along the $y$-axis and time along the $x$-axis. In blue we plot  the midpoint-constrained direct method; e.g., using constraint forces of the form $-\partial I_{\vc A} ({\small \frac{\vc q^{t+1} + \vc q^t}{2}})$. This generates, within the same simulation, effective coefficients of restitution that vary widely between dissipation and unstable energy-growth. In general these effects vary the with time-step size and relative position with respect to the constraint boundary. In green we plot the exact solution. Note that both standard collision integration methods (discussed below) and our new method proposed in Section \ref{sec:full} generate trajectories that closely match this solution.

 Finally, we also plot the endpoint-constrained direct method, e.g., using constraint forces of the form $-\partial I_{\vc A} (\vc q^{t+1})$, in red. Note that while the end-point method likewise does not preserve energy, it generates purely dissipative energy errors. For this reason, end-point constraints and/or additional dissipative stabilizations have often been favored for direct methods in the literature~\citep{Kane99,Pandolfi02,Deuflhard07,KrauseWolloth09,Doyen11}. The degree of dissipation, however, varies in a non-physical manner. In particular, as with the mid-point constraint, restitution for the end-point constrained direct method is likewise dependent on time-step size and the relative position of the particle with respect to the constraint boundary. This is illustrated in Figure \ref{fig:1D_particle_gravity}(b), were we plot the the trajectory of the end-point constrained direct method over small changes in initial position. Note that while these three trajectories are all clearly dissipative, the degree of dissipation continues to vary in an energetically inconsistent fashion.

\begin{figure}
[h]
\centering
\begin{tabular}{cc}
\includegraphics[width=0.49\hsize]{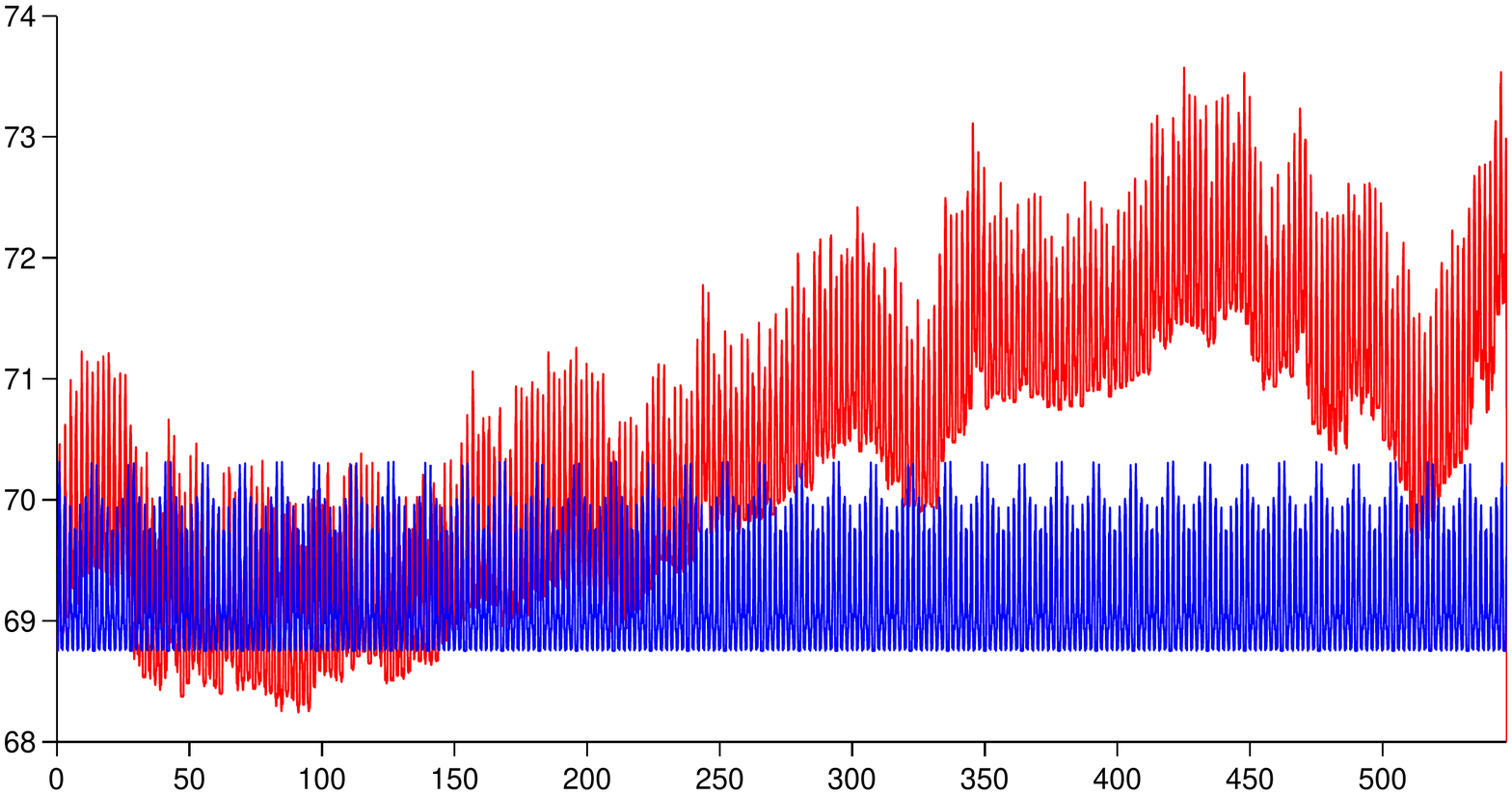} &
\includegraphics[width=0.49\hsize]{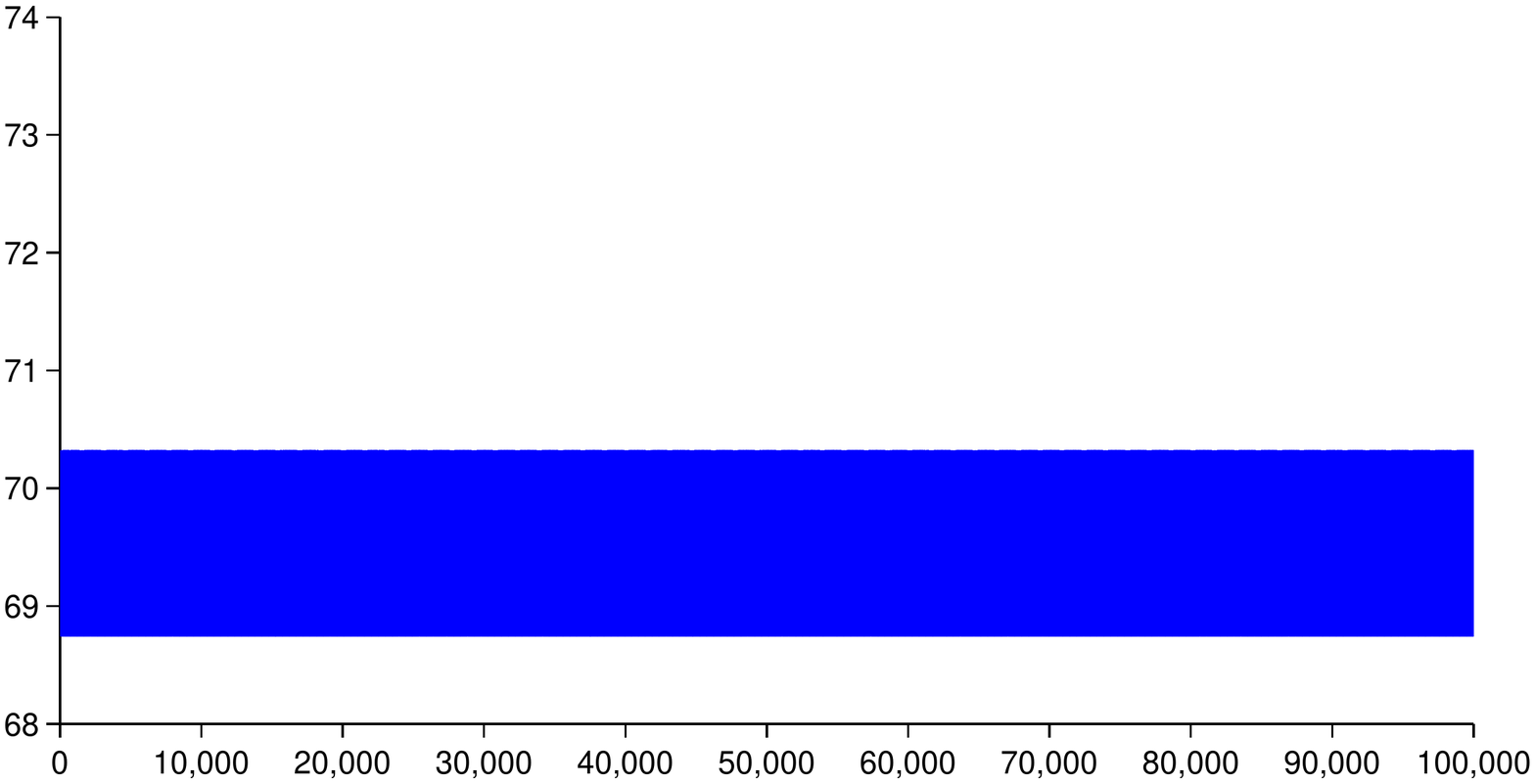}
\figspace
\\ (a) & (b)
\end{tabular}
\caption{{\bf Pogo-Stick Example:} In these figures we plot the energy for a simple, two-dimensional, two-point, mass-spring system vertically oriented, subject to gravity, bouncing on a ground plane constraint. Both methods investigated here begin with St\"{o}rmer-Verlet as the base, unconstrained numerical method. We set $m =1$ for both mass particles. The linear spring stiffness is set to $k=10$, spring length is $l = 5$, and gravity is $g = 9.8213$. In (a) we plot the energy for the first 546 seconds of simulation time, stepping at $h = 10^{-1}$, for both the standard collision integration method in red and our generalized discrete variational integrator (GVI) in blue. After this step the energy in the collision integration blows up. In (b) we continue the GVI energy plot out for 100,000 seconds of simulation time. \label{fig:pogo_Verlet}}
\end{figure}

\subsubsection{Collision Integrators} 
\label{sec:hybrid}

Alternately, \emph{collision integration} methods have also been proposed~\citep{Stratt81,Macneil82,Heyes82,Suh90,Houndonougbo00,Fetecau03,Bond07} in which unconstrained symplectic schemes interrupt fixed-size steps each time a unique constraint is identified as becoming active (i.e., about to be violated). For each such event, an impulsive constraint force (in the form of a reflection) is computed which conserves (depending on the scheme) either a continuous~\citep{Houndonougbo00}, discrete~\citep{Fetecau03} or numerical Hamiltonian~\citep{Bond07}. These methods thus all effectively apply adaptive time-step subdividivision to resolve constraints. Given these variable-step modifications, a trajectory-shadowing, numerical Hamiltonian system can no longer be defined and thus energy conservation can not be expected in the long-term~\citep{Hairer03}.

A novel stabilization scheme, based on the application of two complementary strategies, was recently proposed
to improve
the energetic behavior of collision integrators~\citep{Bond07}. Stabilization is primarily obtained by applying reflections that preserve a second order approximation of the selected integration scheme's (generally St\"{o}rmer-Verlet's) numerical Hamiltonian. A secondary improvement is then also obtained by stepping to and away from constraint events using consistent, higher-order integration schemes. This latter modification, however, is motivated by applications in which constraint events are infrequent. The cost of higher-order integration is thus ameliorated by the assumption that the majority of time-steps are taken constraint free using the base, lower order method. In our development we now consider highly constrained systems where we expect most, if not all, steps to encounter inequality constraints. In this context, any potential stabilization advantage is then given almost exclusively by the higher-order energy reflection. We will examine this further in \S \ref{sec:pogo}.

While these developments can improve the performance of collision integrators, they still demand that only a single constraint be active at any time. Moreover, all collision integrators require expensive root-finding routines to determine each such constraint event. Many applications (e.g., surgical simulation, structural engineering, robotic grasping, granular flow, etc.), however, demand the simultaneous resolution of potentially large numbers of simultaneously active constraints. Individual treatment of constraints in these contexts can lead to ordering bias, unacceptable stability issues, and generally have an especially large detrimental impact in implicit methods.  
Even when such ordering issues are acceptable, (e.g., small time-step, explicit methods) the individual time-ordered evaluation of all active constraint events will be, as \citet{Cirak05} note, computationally intractable for many simulation problems (consider, for example, Zeno's paradox). 

As a generalization it is reasonable, however, to expect that extensions of collision integration methods should be possible, even if not practical, which simultaneously resolve many active constraints. Indeed, one such possible generalization is suggested in \citet{Fetecau03}. Even in this paradigm, however, the interleaving of fixed-step symplectic-momentum methods with a variable-step, energy method still generates drift.

To understand the practical implications of these issues in applying collision integration methods, consider the energy behaviors observed in Figure \ref{fig:pogo_Verlet}(a), above, as well as Figures \ref{fig:pogo_StabVerlet}(a) and \ref{fig:oscillator}(b) and the related discussion in \S \ref{sec:ext_r} and \S \ref{sec:nonlnrOsc}.

\section{Variational Integration and DELI Integrators}
\label{sec:VAR}

Despite the above negative results, nonsmoothly constrained Hamiltonian systems \emph{are} invariant preserving in the usual senses~\citep{Kane99}.
Thus it is still desirable to find geometric integration methods that \emph{do not} introduce numerical artifacts into simulations (dissipative or otherwise). One fruitful direction of research, that we do not pursue here, is the ongoing investigation in extending the discrete \emph{energy-momentum} conserving framework to the nonsmooth setting. Instead we pursue the complementary research program of extending \emph{symplectic-momentum} preserving schemes to resolve inequality constrained dynamics. In particular, observing the natural variational structure implied by inequality constraints~\citep{RockWets98}, we revisit the application of \emph{Discrete Variational Integration}~\citep{MarsdenWest01A} to the inequality constrained setting.  

To date, two approaches towards extending Variational Integrators to the nonsmooth, inequality constrained setting have been considered:

In the first approach, \emph{Nonsmooth Variational Integrators} are generated~\citep{Kane99,Pandolfi02}  by extremizing the standard discrete-action obtained directly from the quadrature of the underlying, smooth, unconstrained system. This obtains a standard Variational Integrator into which a nonsmooth constraint force is then directly substituted. We observe that this approach then obtains nothing other than a direct-substitution symplectic method and is thus covered by Theorem B.1 and subject to the same drift and stability issues as discussed above in \S\ref{sec:direct_methods}. Moreover, we observe that this approach violates the fundamental ``discretize-first, extremize-second'' mantra of discrete variational methodology.

In the second approach, \citet{Fetecau03} observe that nonsmoothness in the variational formulation can be avoided altogether by adding an additional degree-of-freedom that parameterizes time of collision. Composing and extremizing a corresponding discrete variational principle then generates \emph{Variational Collision Integrators} which can be shown to be symplectic~\citep{Fetecau03}. As observed by \citet{Bond07}, however, resulting schemes exactly obtain a standard collision integrator, as discussed above in \S\ref{sec:hybrid}, composed so that each collision preserves a discrete (rather than the continuous or numerical) energy at each reflection.

With these observations in place we then note that, to the best of our knowledge, a \emph{nonsmooth, discrete Variational Principle for  inequality constrained systems remains unexplored}. 
We thus begin our development of nonsmooth geometric methods by developing these initial steps in the fully ``discretize-first'' manner. In particular, we apply Variational Integration (VI) methods~\citep{MarsdenWest01A,Hairer03} as a prescriptive approach for generating a  family of geometric methods for inequality constrained dynamics.

\subsection{VI Methods}
Recall that VI methods, mimicking the continuous derivation of the Lagrangian equations of motion, 
extremize a discrete action on the time interval $t \in [0, T]$, over all possible discrete paths of the form $\{ \vc q^{0} ,...,\vc q^{k} , ..., \vc q^{N} \} $, where $N = T/h$. We begin with the discrete Lagrangian 
\begin{align}
L_d (\vc q^k, \vc q^{k+1}) \simeq \int_k^{k+1} L(\vc q,\dot {\vc q}) dt,
\end{align} 
that gives a quadrature of the natural Lagrangian over a finite interval.

The sum of the discrete Lagrangians, over all intervals, then gives the corresponding standard discrete action, 
\begin{align}
\sum^{N-1}_{k =0} L_d (\vc q^k, \vc q^{k+1}) .
\end{align}
Extremizing this associated discrete action then generates a unique, symplectic-momentum preserving integration scheme~\citep{Hairer03}, with accuracy specified by the order of the quadrature applied~\citep{West04}.

\subsection{DELI}
\label{sec:DELI}
As a point of simple yet, to the best of our knowledge, novel departure we now 
add the nonsmooth penalty term to the discrete action and formulate a discrete, nonsmooth, constrained Hamilton's Principle that enforces hard inequality constraints on all endpoints of the discrete trajectory,
\begin{align}
\delta_k \sum^{N-1}_{k =0} \Big( L_d (\vc q^k, \vc q^{k+1}) -  I_{\vc A} (\vc q^{k+1}) \Big) \ni 0.
\end{align}   
Stationarity then gives us our constrained, Discrete Euler-Lagrange Inclusion (DELI),
\begin{align}
\label{eq:DEL_inclusion}
D_2 L_d ( \vc q^{t-1}, \vc q^t) + D_1L_d ( \vc q^t, \vc q^{t+1}) - \partial I_{\vc A}( \vc q^t) \ni 0,\\
\vc q^{t+1} \in \vc A.
\end{align}   
Given the last two configurations, $\vc q^{t}$ and $ \vc q^{t-1}$,
the DELI integrator advances state to a new configuration, $\vc q^{t+1}$. Here, in this recursive form, we observe that discrete constraint forces, applied to enforce constraints at the end of time-steps, are generated by normal cones constructed at the beginning of time-steps.

We can further clarify this relationship by applying a momentum matching argument~\citep{West04} (or, if preferred, a discrete Legendre-Fenchel transform) to obtain the corresponding discrete momentum variables, $\vc p^t$ and $\vc p^{t+1}$, as well as the discrete phase-space map composed of a position-map inclusion,
\begin{align}
\label{eq:VI_nonsmooth_position}
D_1L_d (\vc q^t, \vc q^{t+1}) + \vc p^t - \partial I_{\vc A}(\vc q^t) &\ni 0,\\
\label{eq:VI_pos_constr}
\vc q^{t+1} &\in \vc A,
\end{align}
and a corresponding momentum-map update equation,
\begin{align}
\label{eq:gcdvi_momentum_map0}
\vc p^{t+1} = D_2 L_d (\vc q^{t}, \vc q^{t+1}).
\end{align}

A one-step formulation of the DELI then follows. Given the discrete phase-space pair, $(\vc p^t, \vc q^t)$, at time $t$, we now apparently can solve the forward position map system,  (\ref{eq:VI_nonsmooth_position}) and (\ref{eq:VI_pos_constr}), for the new constrained configuration $\vc q^{t+1}$. Then, with the new configuration in hand, the discrete momentum update follows directly from  (\ref{eq:gcdvi_momentum_map0}).

\subsection{The Underdetermined Forward Map}
\label{sec:under}

We are now left with an interesting question of causality with respect to constraints. In our above DELI we require that corrective forces, that appear to be generated by the configuration at the beginning of the time-step (i.e, at $\vc q^t $), 
enforce constraints on the configuration at the \emph{end} of the time step. 

In the direct methods we discussed above in Section \ref{sec:direct_methods}, constraint forces are instead 
generated by configurations defined either somewhere in the middle of the time-step, i.e. $\vc q^t + \alpha \vc q^{t+1}, \alpha \in (0,1)$, or else at the end, i.e., $\vc q^{t+1}$. When solving inclusions with these terms, the variational structure of the generalized gradient then \emph{entirely} specifies the constraint force. This, in turn, leads to the destruction of the 
invariant preserving properties of the underlying unconstrained method. See Appendix B,
for further discussion.

To retain the geometric properties of unconstrained methods we thus 
apparently require  some additional degree(s) of freedom in choosing our constraint forces. Due to the odd time-causality noted above, the variational structure of our DELI formulation no longer fully specifies constraint forces and thus provides such 
latitude for their selection. 
To see this, we first observe that in the DELI system merely requires constraint forces to lie in the span of the normal cone generated by the configuration at time $t$,  i.e, $-\partial I_{\vc A}(\vc q^t)$.
Since this cone is independent of final configuration, (\ref{eq:VI_nonsmooth_position}) only specifies the span of possible constraint force directions but does not, unlike in direct-substitution methods, pin down a corresponding force magnitude.

Thus, in the inequality constrained setting, we observe that standard discrete variational structure is not sufficient, on its own, to define a well-posed integrator. We find that constraint forces must be applied along the directions positively spanned by the normal cone and, likewise, all inequalities must be satisfied, but fundamentally, nothing further is specified. In particular, an underdetermined system is composed with any number of solutions, most of which will not generate symplectic-momentum preserving maps. \emph{Our proposed DELI formulation thus provides a framework for numerical integration in addition to which further structure is required to compose a fully specified integration method.} 

In the following sections we will propose the first instantiation of a DELI-based method. We specifically note that the dichotomy between nonsmooth and smooth trajectories is not well-resolved by the discrete variational framework. We will formulate our DELI-based integrator by first considering both the structure of smooth trajectories and the corresponding nonsmooth case. We will then show how imposing discrete analogues of these nonstandard structures upon the DELI formulation composes a fully defined DELI-based integration method and consider the behavior obtained by their application.

\section{Time-Continuous Setting}

In order to consider how to design such methods, we first focus on the ways in which the flow of the constrained Hamiltonian system is transported in the time-continuous setting.

\subsection{Tangency and Normality}
\label{sec:TandN}

As a first step we introduce a few useful notational conventions and then briefly discuss variational structures that allow us to generalize notions of tangency and normality for cases involving multiple active constraints. 

\subsubsection{Notation}

For convenience, throughout the remaining sections we adopt a constraint subset notation. Presuming, as above, that the a full constraint set is indexed by $i \in \{0,...,m\}$, we then define, for each index subset, $\mathbb{K} \subset\{0,...,m\}$, the corresponding constraint subset,
\begin{align}
\vc g_{_{ \scriptsize \mathbb{K}}}( \vc q) \defeq \Big( \vc g_{_{k_0}}( \vc q),..., \vc g_{_{k_l}}( \vc q) \Big)^T, \quad \{k_0,..,k_l \} \equiv \mathbb{K},
\end{align} 
constraint  subset gradient. 
\begin{align}
\vc G_{_{ \scriptsize \mathbb{K}}}( \vc q) \defeq \Big( \nabla \vc g_{_{k_0}}( \vc q)^T,..., \nabla \vc g_{_{k_l}}( \vc q)^T \Big)^T, \quad \{k_0,..,k_l \} \equiv \mathbb{K},
\end{align} 
and the active constraint subset gradient,
\begin{equation}
\vc N_{_{ \scriptsize \mathbb{K}}}(\vc q) \defeq \vc G_{_{ \scriptsize \mathbb{AK}}}( \vc q), \quad \mathbb{AK} = \big\{ i : \>  \> g_{i}(\vc q) = 0, \> \> i \in \mathbb{K} \big\}.
\end{equation}
Consistent with the notation introduced  in Section \ref{sec:problem_statement} for the vector-valued constraint function, $\vc g( \vc q)$,  an absence of subscripting indicates that the entire constraint set is considered.
We then have $\vc G( \vc q) = \vc G_{_{\{0,...,m\}}}( \vc q) $ and $\vc N( \vc q) = \vc N_{_{\{0,...,m\}}}( \vc q)$.

\subsubsection{Normal and Tangent Cones}
\label{sec:nAndt}
The nonsmooth generalization of a normal, the \emph{inward pointing normal cone}~\citep{Rockafellar,Clarke,RockWets98} to $\vc A$ at $\vc q$,  can, in our context, be defined directly with respect to the gradients of the constraint functions, $\vc g$.
It is given, with respect to the full active constraint gradient,  $\vc N(\vc q)$, as $\{ \vc N(\vc q) \> \> \vc x: \vc x \geq 0 \}$. 
Alternately, we note that for $\vc q$ 
in
$\vc A$, the generalized gradient of the indicator function generates the \emph{outward} pointing normal cone to the feasible set. Thus, for all configurations
we have 
\begin{align}
\partial I _{\vc A}(\vc q) = \{ -\vc N(\vc q) \> \> \vc x: \vc x \geq 0 \}.
\end{align}
We note that, in particular, the normal cone reduces, on the interior of $\vc A$, to the singleton, $\{0\}$. Correspondingly, recalling the Euler-Lagrange DI, given in  (\ref{eq:DIEL}), trajectories on the interior of the admissible set remain governed by the standard Euler-Lagrange equations of motion.

Polar to the normal cone, \emph{the tangent cone} to $\vc A$ at $\vc q$, can also be defined with respect to the active constraint gradients, $\vc N$, as
\begin{equation}
\vc T(\vc q) \defeq \{ \vc y : \vc N(\vc q)^T \vc y \geq 0 \}.
\end{equation}
When the admissible set is locally regular, the tangent cone, $\vc T(\vc q)$, encodes the set of feasible directions at $\vc q$, along which infinitesimal motion is locally permissible while, more generally, as we will discuss below, tangent cones generate useful generalizations of tangent spaces to nonsmooth regions of the boundary.

\begin{figure}
[t]
\centering
\includegraphics[height=0.22\hsize]{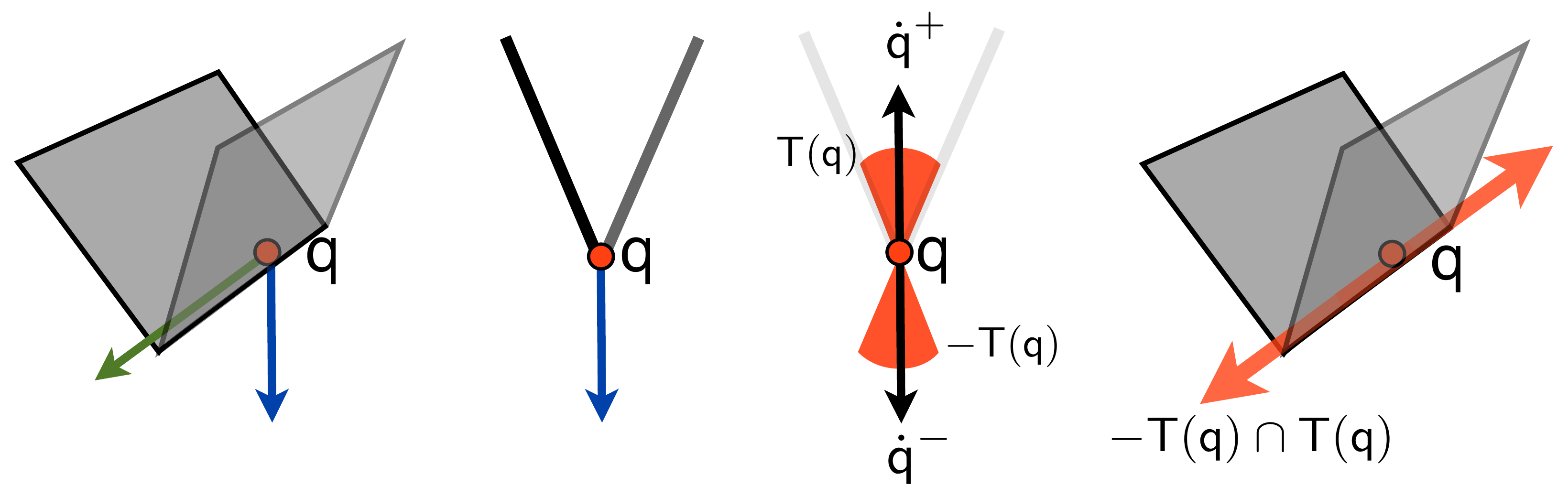} \\
(a) \quad  \quad \quad  \quad    \quad   (b) \quad \quad  \quad   \quad   (c)\quad \quad  \quad   \quad   \quad  \quad  (d) 
\caption{A simple motivating example of smooth and nonsmooth trajectories for constrained systems. See \S \ref{sec:constrained} and \S \ref{sec:smooth_motion} for discussion.
\label{fig:moreauMotivation}}
\end{figure}

\subsection{Constrained Trajectories}
\label{sec:constrained}
In the inequality-constrained setting there is a critical dichotomy between nonsmooth and smooth motion. Consider the simple example of a mass-particle constrained to lie in a nonsmooth valley illustrated in Figure \ref{fig:moreauMotivation} (a).
If the particle lies on the valley floor, then it could have arrived via some motion that is locally tangent to the valley's boundaries, such as moving along the valley floor, e.g., the green arrow in Figure \ref{fig:moreauMotivation} (a). In this case smooth trajectories, such as continuing along the valley floor, exist that can continue to satisfy admissibility. Alternately, the particle could have arrived at the valley floor with a downward pointing velocity, e.g., the blue arrows in Figure \ref{fig:moreauMotivation} (a) and in the cut-away view in Figure \ref{fig:moreauMotivation} (b). In this case there are no possible smooth trajectories that can continue to satisfy admissibility. Some sort of nonsmooth jump at the boundary is then required.

To 
allow for the 
possibility of nonsmooth jumps we consider the left and right limits of velocity,
\begin{align}
 \begin{split}
\dot {\vc q}^-(t) = \lim_{\tau \uparrow t} \dot {\vc q}(\tau) \quad \text{and} \quad 
\dot {\vc q}^+(t) = \lim_{\tau \downarrow t} \dot {\vc q}(\tau),
 \end{split}
\end{align}
where we assume that $\dot {\vc q}$ has bounded variation on finite time intervals; limits on momentum follow similarly. We then consider trajectory intervals and events of interest with respect to the constraint set:
\begin{define}
(Smooth Trajectory Intervals and Nonsmooth Trajectory Events).  We identify {\bfi smooth trajectory intervals} as time intervals $[a,b] \subseteq \mathbb{R}$ where $\dot {\vc q} (t) = \dot {\vc q}^-(t)  = \dot {\vc q}^+(t)$ for all $t \in [a,b]$ and {\bfi nonsmooth trajectory events} as times $t \in \mathbb{R}$ where $\dot {\vc q}^-(t)  \ne \dot {\vc q}^+(t)$.
\end{define}

\subsection{Kinematic Conditions}

Now consider the cut-away view of our point-mass example again, shown in Figure \ref{fig:moreauMotivation} (c) with both the tangent and negated tangent cones depicted in red. If the mass-particle lies on the boundary at time $t$, then it must have approached the boundary (in the left time limit) with a velocity in the negated tangent cone, since no other possible trajectory exists that could have reached the boundary. Likewise, in order to remain an admissible trajectory, the particle must leave the boundary (in the right time limit) with a velocity that is non-negative with respect to all active constraint gradients; thus the right-limit velocity should lie in the positive tangent cone.

This intuition is made rigorous by a Lemma due to~\citet[Proposition 2.2]{Moreau88}, which says that a trajectory lies in the admissible region \emph{for all time} if and only if left velocities always lie in the negated tangent cone and right velocities lie in the positive tangent cone:
\begin{lemma}
\begin{align}
\vc q(t) \in \vc A, \> \> \> \>  \text{for all} \> \> t \> \> \> \> \Longleftrightarrow \> \> \> \>  \dot {\vc q}^-(t)  \in - \vc T(\vc q) \> \> \> \>  \text{and} \> \> \> \>\dot {\vc q}^+(t)  \in  \vc T(\vc q), \> \> \> \> \text{for all} \> \> t.
\end{align}
\end{lemma}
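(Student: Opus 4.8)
The plan is to prove the two implications separately, working locally in time and exploiting the definition of the tangent cone $\vc T(\vc q) = \{ \vc y : \vc N(\vc q)^T \vc y \geq 0\}$ in terms of active constraint gradients.

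For the forward direction ($\Rightarrow$), suppose $\vc q(t) \in \vc A$ for all $t$. Fix a time $t_0$ and an index $i$ that is active at $t_0$, i.e. $g_i(\vc q(t_0)) = 0$; for all other $i$ the constraint is inactive in a neighborhood and imposes no condition on the one-sided velocities. For the active $i$, consider the scalar function $\phi_i(t) \defeq g_i(\vc q(t)) \geq 0$ with $\phi_i(t_0) = 0$. First I would show that $\phi_i$ is differentiable from the left and right whenever $\dot{\vc q}^\pm$ exist, with $\dot\phi_i^\pm(t_0) = \nabla g_i(\vc q(t_0))^T \dot{\vc q}^\pm(t_0)$ (chain rule for a $C^1$ constraint function composed with a bounded-variation path; one may need to note $\vc q$ itself is Lipschitz/absolutely continuous since $\dot{\vc q}$ has bounded variation). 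Since $\phi_i \geq 0$ everywhere and attains a minimum value $0$ at $t_0$, the right derivative must satisfy $\dot\phi_i^+(t_0) \geq 0$ and the left derivative $\dot\phi_i^-(t_0) \leq 0$. Collecting over all active $i$ gives $\vc N(\vc q(t_0))^T \dot{\vc q}^+(t_0) \geq 0$, i.e. $\dot{\vc q}^+(t_0) \in \vc T(\vc q)$, and $\vc N(\vc q(t_0))^T \dot{\vc q}^-(t_0) \leq 0$, i.e. $\dot{\vc q}^-(t_0) \in -\vc T(\vc q)$.

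For the reverse direction ($\Leftarrow$), suppose the one-sided velocity conditions hold at every time. I would argue by contradiction: if $\vc q(\bar t) \notin \vc A$ for some $\bar t$, then some $g_i(\vc q(\bar t)) < 0$. Let $t_* \defeq \sup\{ t < \bar t : g_i(\vc q(t)) \geq 0 \}$ (well-defined if $\vc q$ starts admissible; otherwise take an infimum over times after $\bar t$, or simply localize). By continuity $g_i(\vc q(t_*)) = 0$, so constraint $i$ is active at $t_*$, and $g_i(\vc q(t)) < 0$ on $(t_*, \bar t]$. Then the right derivative $\dot\phi_i^+(t_*) \leq 0$; combined with the hypothesis $\dot{\vc q}^+(t_*) \in \vc T(\vc q)$, which forces $\nabla g_i(\vc q(t_*))^T \dot{\vc q}^+(t_*) \geq 0$, we get $\dot\phi_i^+(t_*) = 0$. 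This alone is not yet a contradiction, so I would strengthen the argument: integrate, using that $\phi_i$ is absolutely continuous (or at least that its one-sided derivatives control its increments), to write $g_i(\vc q(t)) = \int_{t_*}^{t} \dot\phi_i(s)\, ds$ for $t$ slightly larger than $t_*$, and show the integrand is $\geq 0$ near $t_*$ — because at every active time the right velocity lies in $\vc T$, and just after $t_*$ the constraint may briefly become inactive (contradiction, it was assumed negative) or remains active with nonnegative right derivative. The cleanest route is Moreau's original: show the set $\{ t : \vc q(t) \in \vc A\}$ is both open and closed in the relevant interval, or directly that $t \mapsto \operatorname{dist}(\vc q(t), \vc A)$ has nonpositive right derivative wherever it vanishes and hence stays zero.

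The main obstacle I expect is the reverse direction, specifically the measure-theoretic bookkeeping: making the chain rule rigorous for $g_i \circ \vc q$ when $\dot{\vc q}$ only has bounded variation (so $\vc q$ is Lipschitz and $\phi_i$ is Lipschitz, hence absolutely continuous and differentiable a.e., with the a.e. derivative equal to $\nabla g_i(\vc q)^T \dot{\vc q}$ wherever the latter is defined), and then upgrading ``$\dot\phi_i \geq 0$ at every active time'' to ``$\phi_i$ is nondecreasing while active, hence cannot go negative.'' This requires care because the set where constraint $i$ is active need not be an interval. I would handle it by the standard device of showing that $\psi(t) \defeq \min_i g_i(\vc q(t))$ (or $-\operatorname{dist}(\vc q(t),\vc A)$) satisfies $\dot\psi^+(t) \geq 0$ at every $t$ with $\psi(t) = 0$, and invoking a one-sided-derivative comparison lemma (a Gronwall-type / Dini-derivative argument) to conclude $\psi \geq 0$ for all $t$ given $\psi(0) \geq 0$. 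Since this is precisely \citet[Proposition 2.2]{Moreau88}, I would cite that result for the technical core and present the above as the conceptual proof sketch, with the tangent/normal cone translation being the only genuinely new (and entirely routine) ingredient.
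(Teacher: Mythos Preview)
The paper does not actually prove this lemma; it simply attributes it to \citet[Proposition~2.2]{Moreau88} and states it without argument. Your proposal is therefore strictly more detailed than the paper itself: you give a correct conceptual sketch for both directions (the forward direction via one-sided derivatives of $g_i\circ\vc q$ at a minimum, the reverse via a Dini-derivative/comparison argument on $\min_i g_i(\vc q(t))$), and, like the paper, you ultimately defer the measure-theoretic technicalities of the reverse implication to Moreau's original result.
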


For the nonsmooth case, as in the simple example
discussed above,  
we expect that the left limit will not equal the right and thus we encounter a nonsmooth event requiring an instantaneous jump. We will discuss the time-continuous, nonsmooth case more thoroughly below in Section \ref{sec:nonsmooth_motion}. First, however, we consider the time-continuous, smooth case in greater detail.

\subsection{Smooth Trajectories}
\label{sec:smooth_motion}
We note that smooth motion, with respect to inequality constraints, corresponds at all times to either (unconstrained) motion on the interior of the admissible set, or to any combination of the following boundary cases: (1) encountering constraint boundaries tangentially, (2) leaving constraint boundaries, or (3) tangential motion along constraint boundaries.

More concretely, smooth trajectory intervals are characterized by a \emph{tangent subspace condition}:
\begin{lemma}
If $[a,b]$ is a smooth trajectory interval then  
\begin{align}
\label{eq:TSC}
\dot {\vc q}^-(t) = \dot {\vc q}^+(t) \in -\vc T \big(\vc q(t) \big) \cap \vc T \big(\vc q(t) \big), \> \> \> \> \text{for all} \> \> t \in [a,b].
\end{align}
\end{lemma}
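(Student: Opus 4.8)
The plan is to derive the tangent subspace condition \eqref{eq:TSC} directly from the kinematic conditions of the preceding Lemma (Moreau's Proposition 2.2) together with the definition of a smooth trajectory interval. The key observation is that, on a smooth trajectory interval $[a,b]$, we have $\dot{\vc q}(t) = \dot{\vc q}^-(t) = \dot{\vc q}^+(t)$ for every $t \in [a,b]$ \emph{by definition}, so the left and right velocity limits coincide. Since $\vc q(t) \in \vc A$ for all $t$ (the trajectory is admissible), Moreau's Lemma applies and yields $\dot{\vc q}^-(t) \in -\vc T(\vc q(t))$ and $\dot{\vc q}^+(t) \in \vc T(\vc q(t))$ simultaneously. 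Substituting the equality of the one-sided limits, the common value $\dot{\vc q}(t)$ must therefore lie in both cones, i.e. in the intersection $-\vc T(\vc q(t)) \cap \vc T(\vc q(t))$, which is exactly the claimed inclusion.

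First I would state explicitly that on $[a,b]$ the hypothesis gives a single well-defined velocity $v(t) := \dot{\vc q}(t)$ agreeing with both one-sided limits. Next I would invoke Moreau's Lemma in the forward direction: admissibility of the whole trajectory implies $v(t) = \dot{\vc q}^-(t) \in -\vc T(\vc q(t))$ and $v(t) = \dot{\vc q}^+(t) \in \vc T(\vc q(t))$ for each $t \in [a,b]$. Then I would simply conjoin the two memberships to conclude $v(t) \in -\vc T(\vc q(t)) \cap \vc T(\vc q(t))$. One should be slightly careful at the endpoints $a$ and $b$ and about whether Moreau's statement is phrased for all $t \in \mathbb{R}$ or on an interval; since the ambient trajectory is assumed admissible everywhere it is defined, this is not a real issue, but I would note it. It is also worth remarking, for the reader's intuition, that $-\vc T(\vc q) \cap \vc T(\vc q) = \{\vc y : \vc N(\vc q)^T \vc y = 0\}$ is the largest linear subspace contained in the tangent cone — hence the name ``tangent subspace condition'' — which recovers the familiar tangent space when the active constraints are independent; this connects the three enumerated smooth boundary cases (tangential approach, departure, and sliding) to the single algebraic condition.

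The proof is essentially a one-line consequence once the definitions are unpacked, so there is no serious technical obstacle. The only point requiring minor care is the logical direction of Moreau's Lemma: the Lemma is stated as a biconditional, and here we use only the ``$\Rightarrow$'' implication (admissibility forces the cone conditions on the one-sided limits), which is unambiguous. If one wanted a fully self-contained argument one could instead re-derive the needed implication from first principles — the left limit cannot point strictly out of $\vc A$ or the trajectory would have been infeasible just before $t$, and the right limit cannot point strictly out or it would be infeasible just after $t$ — but invoking the cited Lemma is cleaner and is permitted here. So the main ``obstacle'' is purely expository: making clear that the content of the statement is the identification, on smooth intervals, of the velocity with an element of the tangent cone \emph{and} its negative, i.e. with a genuinely tangent (two-sided) direction, as opposed to the merely one-sided cone conditions that hold at nonsmooth events.
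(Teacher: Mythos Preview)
Your proposal is correct and matches the paper's own proof, which simply states that the result ``follows directly from Lemma 3.1 and Definition 1.'' You have unpacked exactly this: Definition 1 gives $\dot{\vc q}^-(t)=\dot{\vc q}^+(t)$ on $[a,b]$, and Lemma 3.1 (Moreau) gives the two cone memberships, whose conjunction is the claimed inclusion.
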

\begin{proof}
Follows directly from Lemma 3.1 and Definition 1.
\end{proof}

 The intersection of the tangent cone and the negated tangent cone thus defines the \emph{tangent subspace} at nonsmooth portions of the admissible set boundary 
 (see our example Figure \ref{fig:moreauMotivation} (d)), 
along which smooth motion is possible and trivially generates the tangent space to smooth regions of the boundary. Conversely, we note that if the left 
velocity lies in the tangent subspace then, locally, an admissible smooth trajectory 
exists.

In particular, while the structure of the normal cone implies that the standard (pointwise) Signorini-Fichera~\citep{Kikuchi88} complementarity\footnote{Note that here and in the following we are using the standard complementarity notation, $\vc x \perp \vc y$, to indicate $x_i y_i = 0$ holds componentwise for the matching entries of $\vc x, \vc y \in \mathbb{R}^{m+1}$. } condition, $\vc g(\vc q) \perp \lambda$, holds everywhere, 
over smooth trajectory intervals Lemma 3.2 and the Euler-Lagrange DI in (\ref{eq:DIEL}) imply that a stronger \emph{constraint-force continuity condition} also holds per constraint: 
\begin{theorem}
Let  $g_i \big(\vc q(a)\big ) = 0$ and  $\nabla g_i\big(\vc q(a)\big)^T \dot{\vc q}^-(a) = 0$; if $\lambda_i(t) > 0$ holds for all $t \in [a,b]$ then
\begin{itemize}
 \item[(a)] $g_i\big(\vc q(t)\big) =  0$,  and 
 \item[(b)] $\nabla g_i\big(\vc q(t)\big)^T \dot{\vc q}^+(t) = 0$,
\end{itemize}
\quad \quad for all $t \in [a,b]$.
\end{theorem}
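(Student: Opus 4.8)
The plan is to get part (a) directly from the pointwise Signorini--Fichera complementarity condition together with the hypothesis $\lambda_i(t)>0$, and then to get part (b) by differentiating the identity from (a) along the smooth interval. Nothing beyond (\ref{eq:DIEL}), Definition 1, and the complementarity condition stated just above the theorem is needed.

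For (a) I would argue as follows. At every $t$ the constraint force in (\ref{eq:DIEL}) lies in $-\partial I_{\vc A}(\vc q(t))=\{\vc N(\vc q(t))\,\vc x:\vc x\ge 0\}$, so it decomposes as $\vc N(\vc q(t))\lambda(t)$ with $\lambda(t)\ge 0$ and the componentwise complementarity $g_j(\vc q(t))\,\lambda_j(t)=0$ holding for every $j$. Taking $j=i$ and using $\lambda_i(t)>0$ on $[a,b]$ forces $g_i(\vc q(t))=0$ throughout $[a,b]$. Equivalently, in contrapositive form: if $g_i(\vc q(t_0))>0$ at some $t_0\in[a,b]$, complementarity would give $\lambda_i(t_0)=0$, contradicting the hypothesis --- a constraint carrying a strictly positive contact force cannot be separating. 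I would note that feasibility $g_i(\vc q(t))\ge 0$ is consistent with this but is not needed.

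For (b) I would use that, $[a,b]$ being a smooth trajectory interval, Definition 1 makes $t\mapsto\vc q(t)$ differentiable on $[a,b]$ with $\dot{\vc q}(t)=\dot{\vc q}^-(t)=\dot{\vc q}^+(t)$ there; since $g_i$ is smooth, $\phi(t):=g_i(\vc q(t))$ is then differentiable with $\phi'(t)=\nabla g_i(\vc q(t))^T\dot{\vc q}(t)$. By (a), $\phi\equiv 0$ on $[a,b]$, hence $\phi'\equiv 0$, i.e. $\nabla g_i(\vc q(t))^T\dot{\vc q}^+(t)=0$ for all $t\in[a,b]$, using one-sided derivatives at the endpoints and recovering the assumed tangency $\nabla g_i(\vc q(a))^T\dot{\vc q}^-(a)=0$ at $t=a$. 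That gives (b).

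Since the argument is this short, I do not expect a real obstacle; the points needing care are bookkeeping. First, I must justify that $\vc q(\cdot)$ is genuinely differentiable on the smooth interval so the chain rule applies --- precisely the role of the ``smooth trajectory interval'' hypothesis (plus enough regularity of the constrained flow), as foreshadowed in the sentence preceding the theorem. Second, I must pin down the identification between the scalar $\lambda_i(t)$ in the statement and the $i$-th component of the normal-cone decomposition of the constraint force in (\ref{eq:DIEL}), which is what carries the complementarity used in (a). I would also remark that the hypotheses $g_i(\vc q(a))=0$ and $\nabla g_i(\vc q(a))^T\dot{\vc q}^-(a)=0$ are themselves consequences of $[a,b]$ being smooth with $\lambda_i>0$ (via Lemma 3.2), so they mainly serve to fix the scenario rather than to drive the proof.
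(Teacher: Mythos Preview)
Your proposal is correct and follows essentially the same route as the paper: part (a) comes directly from the Signorini--Fichera complementarity $\lambda_i>0\Rightarrow g_i(\vc q)=0$ built into the normal-cone representation of the constraint force, and part (b) follows by applying the chain rule to the resulting identity $g_i(\vc q(t))\equiv 0$ on $[a,b]$. Your added discussion of why differentiability holds (via the smooth-trajectory-interval context) and of the identification of $\lambda_i$ with the normal-cone coefficient is sound and in fact more explicit than the paper's terse treatment.
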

\begin{proof}
We can equivalently define the inward-pointing normal cone with respect to \emph{all} constraint gradients as $\vc G(\vc q) \lambda = - \partial I_{\vc A} (\vc q)$ with $g_i(\vc q) > 0 \implies \lambda_i = 0$ and $\lambda_i > 0 \implies g_i(\vc q) = 0$. In turn, this implies that if $\lambda_i(t)>0$ on an interval, i.e., $t \in [a,b]$, we must then correspondingly have $g_i(\vc q(t)) = 0$ for all $t \in [a,b]$. The chain rule then gives $\nabla g_i\big(\vc q(t)\big)^T \dot{\vc q}^+(\vc q(t)) = 0$ on the same interval. 
\end{proof}

\subsection{Nonsmooth Motion}
\label{sec:nonsmooth_motion}

With these observations on smooth motion in place, we turn to consider nonsmooth motion.  As discussed above, when new constraints are encountered non-tangentially, i.e., from a normal direction, a nonsmooth trajectory event occurs (see Figure \ref{fig:moreauMotivation} (c)). In the general setting this corresponds to the case where the left limit velocity lies strictly in the interior of the negated tangent cone and requires the satisfaction of \emph{jump conditions}:
\begin{theorem}
If $\dot{\vc q}^-(t) \notin  \vc T(\vc q)$ then  
\begin{itemize}
\item[(a)] $\dot{\vc q}^+(t)  \neq \dot{\vc q}^-(t)$,
\item[(b)] $\vc p^+(t)  = \vc p^-(t) + dp$,
\item[(c)] $dp \in - \partial I_{\vc A} (\vc q)$, and 
\item[(d)]  $ \dot{\vc q}^+(t) \in \vc T(\vc q)$.
\end{itemize}
\end{theorem}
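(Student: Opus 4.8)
The plan is to read the Euler--Lagrange differential inclusion~(\ref{eq:DIEL}) as a \emph{measure} differential inclusion, exploit that $\vc q$ is continuous while $\dot{\vc q}$ (hence $\vc p = \vc M \dot{\vc q}$) is only of bounded variation, and then read off the four claims from Lemma~3.1 together with a Radon--Nikodym argument localized at the atom $\{t\}$.

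First I would dispatch the structural parts. Since $\dot{\vc q}$ has bounded variation on finite intervals, $\vc q$ is continuous, so $\vc q(t) = \vc q^-(t) = \vc q^+(t)$ and, the trajectory being admissible, $\vc q(t) \in \vc A$; in particular $-\partial I_{\vc A}(\vc q(t))$ is a well-defined closed convex cone (the negative span of the active constraint gradients $\vc N(\vc q(t))$, by \S\ref{sec:nAndt}). Claim~(d) is then immediate from Lemma~3.1: admissibility for all time forces $\dot{\vc q}^+(t) \in \vc T(\vc q(t))$. Claim~(a) follows at once, since the hypothesis $\dot{\vc q}^-(t) \notin \vc T(\vc q(t))$ together with~(d) gives $\dot{\vc q}^+(t) \neq \dot{\vc q}^-(t)$. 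Claim~(b) is just the definition $dp \defeq \vc p^+(t) - \vc p^-(t)$, which is nonzero by~(a) because $\vc M$ is invertible.

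The substantive step is~(c). I would interpret~(\ref{eq:DIEL}) as $d\vc p + \nabla V(\vc q)\,dt \in -\partial I_{\vc A}(\vc q)$ in the sense of vector Stieltjes measures, where $d\vc p$ is the differential measure of the BV function $\vc p(\cdot)$. Picking a nonnegative measure $\mu$ that dominates both the total-variation measure $|d\vc p|$ and Lebesgue measure --- e.g.\ $\mu = dt + |d\vc p|$ --- and passing to Radon--Nikodym densities, one gets $(d\vc p / d\mu)(s) + \nabla V(\vc q(s))\,(dt/d\mu)(s) \in -\partial I_{\vc A}(\vc q(s))$ for $\mu$-a.e.\ $s$, in particular at the atom $s=t$. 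There $(dt/d\mu)(t) = 0$ since Lebesgue measure carries no atoms, so $(d\vc p/d\mu)(t) \in -\partial I_{\vc A}(\vc q(t))$; and $(d\vc p/d\mu)(t) = dp/\mu(\{t\})$ is a strictly positive multiple of $dp$, so, the normal cone being a cone, $dp \in -\partial I_{\vc A}(\vc q(t))$, which is~(c).

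The main obstacle I anticipate is exactly this last maneuver: ``evaluating the differential inclusion at a jump'' forces one to commit to the measure-differential-inclusion (sweeping-process) reading of~(\ref{eq:DIEL}) and to check that the density $d\vc p/d\mu$ is, $\mu$-a.e., a genuine selection of the set-valued map $s \mapsto -\partial I_{\vc A}(\vc q(s))$. The two facts that make it work --- the vanishing of the $\nabla V\,dt$ contribution at an atom, and the positive homogeneity (cone structure) of $-\partial I_{\vc A}$ --- are precisely what decouple the impulsive balance from the smooth one; everything else is bookkeeping with Lemma~3.1 and Definition~1.
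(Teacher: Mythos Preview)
Your proposal is correct and follows the same route as the paper: parts (a), (b), (d) are read off from Lemma~3.1 and the definition of $dp$, and part (c) is obtained by interpreting (\ref{eq:DIEL}) as a measure differential inclusion and evaluating it at the atom $\{t\}$. The paper dispatches (c) by citing \citet{Brogliato99} for the reduction of the Euler--Lagrange DI to the jump inclusion $dp \in -\partial I_{\vc A}(\vc q)$, whereas you spell out the underlying Radon--Nikodym/sweeping-process mechanism (vanishing of $dt/d\mu$ at atoms, cone homogeneity of $-\partial I_{\vc A}$); this is exactly the standard argument behind that citation, so the approaches coincide.
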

\begin{proof}
By Lemma 3.1 since $\dot{\vc q}^-(t) \notin  \vc T(\vc q)$, $\dot{\vc q}^-(t)$ must lie strictly in the interior of $-\vc T(\vc q)$ and, again by Lemma 3.1, right feasibility satisfaction of $\dot{\vc q}^+(t) \in \vc T(\vc q)$ then requires a nonsmooth trajectory event at time $t$ where $\dot{\vc q}^+(t)  \neq \dot{\vc q}^-(t)$. To satisfy right feasibility, a jump, $dp$, in momentum is required to obtain $\vc p^+(t)  = \vc p^-(t) + dp \in \vc M \vc T(\vc q)$. Finally, at $t$, the Euler-Lagrange DI reduces~\citep{Brogliato99} to the measure-valued, jump inclusion, $dp \in - \partial I_{\vc A} (\vc q)$, which fully determines the feasible set of possible jumps.
\end{proof}

At nonsmooth trajectory events, however, although the measure-valued Euler-Lagrange system, $dp \in - \partial I_{\vc A} (\vc q)$, continues to enforce momentum conservation, it no longer specifies energy behavior. Conservation of energy is then \emph{explicitly} invoked by the side condition
\begin{align}
\label{eq:jump4}
H(\vc q,\vc p^+) = H(\vc q, \vc p^-).
\end{align}

The combined criteria of  
the \emph{jump conditions} and \emph{energy conservation} are satisfied by all conservative, nonsmooth jumps at the boundary of the admissible set. 
In particular, these jump conditions uniquely define an energy and momentum preserving reflection whenever a single constraint is considered. In the general, multi-constraint setting, however, although the jump conditions effectively delimit the space of solutions, an invariant-preserving, constraint-enforcing, impulse is no longer uniquely given~\citep{Glocker04}. In these cases, jumps can effectively be resolved by any of an infinite set of feasible corrective impulses.

We note, however, that a geometric treatment of inequality constrained systems requires a well-posed resolution of jumps with respect to multiple simultaneously active constraints. Methods that satisfy well-posedness by providing uniquely defined, physically motivated, multi-impact solutions for the multi-constraint setting are thus desirable~\citep{ChatRuina98B}. 
In particular, in the following examples, we employ the generalized reflection operator of~\citet{KaufmanPaiGrinspun10}. Alternately, the multi-impact solutions of~\cite{Moreau88} or \citet{ChatRuina98B} can also be applied.

\section{Discrete Setting}
\label{sec:gcdvi}

Given the above analysis, we observe that nonsmoothly constrained Hamiltonian systems correspond to smooth Hamiltonian systems almost everywhere. At instants of transition, where new constraints are encountered or left behind, \emph{either} a smooth trajectory is maintained across the transition \emph{or} a 
jump 
provides a nonsmooth mapping
between two consistent smooth trajectories.
For smooth transitions, motion is given by the Euler-Lagrange DI while, for nonsmooth transitions, jumps instantaneously reflect to a new, energetically consistent, symmetry preserving, smooth Hamiltonian flow. As discussed above in \S\ref{sec:smooth_motion} and \S\ref{sec:nonsmooth_motion}, the conditions specifying the mode of each such transition are entirely given by the relationship between the left limit velocity and the tangent cone. 

To carry these observations into the discrete setting, we will thus first consider compatible, discrete-predicates for distinguishing between smooth and nonsmooth transition modes at each time-step.
With these predicates in place, we will then generate compatible discrete analogues of the time-continuous, smooth and nonsmooth maps.

As a prescriptive guide for constructing these maps, we next observe that, \emph{if} we require our discrete maps to reduce to symplectic steps in the unconstrained case, any obtained \emph{unconstrained} step ending \emph{on} the constraint boundary is always trivially shadowed by an unconstrained numerical Hamiltonian system~\citep{Hairer03}. If the successive \emph{constrained} time-step is \emph{discrete-smooth}, then we should require the corresponding discrete-smooth map to be \emph{symplectic}, so that it is likewise consistently shadowed.  Similarly, if a successive constrained time-step is \emph{discrete-nonsmooth}, then the corresponding discrete-nonsmooth jump-map should be constructed so that it transitions to a new, uniquely defined, invariant preserving numerical Hamiltonian system that is similarly consistent. Following this guide, we conjecture that a piecewise-smooth, numerical Hamiltonian system, for the full inequality constrained system, can then be constructed by gluing together the corresponding smooth numerical Hamiltonians over successive smooth and nonsmooth transitions.

Below we will employ the preceding time-continuous analysis to show one way in which such discrete maps can be generated. The resulting integrator will, by construction,  attempt to preserve correspondence to such a composite, shadowing Hamiltonian system. We note that one fundamental requirement of maintaining such a correspondence is that all new constraint boundaries be considered, in the discrete setting, only at the end (and/or beginning) of each time-step. Indeed, it is specifically the explicit violation of this particular criterion that causes collision integrators to perform so poorly in the nonsmooth case\footnote{Of course, by construction, collision integrators are explicitly designed only for the nonsmooth case.}. 
With these observations in place we now begin the development of our proposed, DELI-based, nonsmooth geometric methods.

\section{Discrete Smooth Setting}
\label{sec:discr_smooth}

In the discrete setting, smooth-trajectory intervals are determined by considering whether the discrete-analogue of the \emph{tangent subspace condition} in (\ref{eq:TSC}) is satisfied at interval endpoints along the discrete path such that
\begin{align}
\label{eq:discrete_smoothness}
\vc M^{-1} \vc p^{t} \in -\vc T(\vc q^{t}) \cap \vc T(\vc q^{t}).
\end{align}
We then identify each DELI forward-map with a discrete-smooth interval on the discrete path. 
Making all constraint functions and their gradients explicit in (\ref{eq:discrete_smoothness}) we find that the discrete left-limit 
\begin{align}
\label{eq:discrete_left_smoothness}
g_i(\vc q^t) = 0 \Longleftrightarrow \nabla g_i(\vc q^t)^T \vc M^{-1} \vc p^t = 0,
\end{align}
and discrete right-limit 
\begin{align}
\label{eq:discrete_right_smoothness}
g_i(\vc q^{t+1}) = 0  &\Longleftrightarrow \nabla g_i(\vc q^{t+1})^T \vc M^{-1} \vc p^{t+1} = 0,
\end{align}
subspace tangency conditions simply require momentum to lie in the cotangent spaces of all active constraints at the beginning and end of all smooth intervals.

The discrete-analogue of the \emph{constraint-force continuity condition} in Theorem 3.3 is then similarly given by 
\begin{align}
\label{eq:discrete_CFC}
\lambda_i > 0 &\Longrightarrow g_i(\vc q^{t+1}) = 0.
\end{align}

\subsection{Discrete-Smooth Integrator}

Reconsidering our derived DELI in (\ref{eq:VI_nonsmooth_position}) we first make constraint functions and their corresponding gradients explicit to obtain the equivalent forward map system,
\begin{align}
\begin{split}
\label{eq:DVI_nonsmooth}
 D_1L_d (\vc q^t, \vc q^{t+1}) + \vc p^t + \vc N(\vc q^t) \lambda &= 0, \quad \lambda \geq 0,\\
\vc g(\vc q^{t+1}) &\geq 0,\\
\vc p^{t+1} &= D_2 L_d (\vc q^{t}, \vc q^{t+1}).
\end{split}
\end{align} 

We then augment DELI with discrete left-limit tangency as a precondition, discrete right-limit tangency as a postcondition and discrete constraint-force continuity over each smooth interval to obtain a DELI-based, \emph{Discrete-Smooth Integrator}:
\begin{align}
\label{eq:ds1}
 D_1L_d (\vc q^t, \vc q^{t+1}) + \vc p^t + \vc N(\vc q^t) \lambda &= 0,\\
 \label{eq:ds2}
0 \leq \lambda \perp \> \vc g(\vc q^{t+1}) &\geq 0,\\
 \label{eq:ds3}
\vc p^{t+1} &= D_2 L_d (\vc q^{t}, \vc q^{t+1}) + \vc N(\vc q^{t+1}) \> \> \mu,\\
 \label{eq:ds4}
\vc N(\vc q^{t+1})^T \vc M^{-1} \vc p^{t+1} &= 0.
\end{align} 
An update step for the Discrete-Smooth Integrator is then generated by first solving the position update equations, (\ref{eq:ds1}) and (\ref{eq:ds2}), for $\vc q^{t+1}$ and $\lambda$. A momentum update then follows from solving equations (\ref{eq:ds3}) and (\ref{eq:ds4}), for $\vc p^{t+1}$ and $\mu$. 

We then observe that
\begin{theorem}
The Discrete Smooth Integrator given in (\ref{eq:ds1}) -- (\ref{eq:ds4}) is symplectic-momentum conserving. 
\end{theorem}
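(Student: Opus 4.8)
The plan is to show that over any discrete-smooth interval the one-step map $(\vc q^t,\vc p^t)\mapsto(\vc q^{t+1},\vc p^{t+1})$ cut out by (\ref{eq:ds1})--(\ref{eq:ds4}) is nothing but a constrained discrete-variational (RATTLE-type) integrator for an \emph{equality}-constrained discrete Lagrangian, and then to read off symplecticity and momentum conservation from the corresponding fundamental one-form identity, exactly as in the unconstrained and holonomically constrained variational-integrator theory~\citep{MarsdenWest01A,Hairer03,Leimkuhler04}. The first step is a reduction: by the complementarity in (\ref{eq:ds2}) a constraint with $\lambda_i=0$ contributes nothing to (\ref{eq:ds1}), while any constraint with $\lambda_i>0$ is active at $\vc q^{t+1}$ by (\ref{eq:ds2}) and, on a discrete-smooth interval, active at $\vc q^t$ as well, with the discrete velocity-level tangency conditions (\ref{eq:discrete_left_smoothness})--(\ref{eq:discrete_right_smoothness}) in force at both ends. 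Hence over the interval the active index set $\mathbb{AK}$ is frozen, the indicator $I_{\vc A}$ may be traded for $I_{\{\vc q\,:\,\vc g_{\mathbb{AK}}(\vc q)=0\}}$ (the active constraints act bilaterally), and (\ref{eq:ds1})--(\ref{eq:ds4}) become the standard discrete-variational RATTLE system for the augmented discrete Lagrangian $L_d(\vc q^k,\vc q^{k+1})-\vc g_{\mathbb{AK}}(\vc q^{k+1})^T\lambda^{k+1}$: (\ref{eq:ds1})--(\ref{eq:ds2}) is the position-level discrete Euler--Lagrange equation with multiplier attached to the normal cone $\vc N(\vc q^t)$, (\ref{eq:ds3}) is the constrained discrete Legendre transform, and (\ref{eq:ds4}) is the hidden cotangent-bundle constraint that determines $\mu$ and returns the image to the constrained momentum phase space $\vc P_{\mathbb{AK}}\defeq\{(\vc q,\vc p)\,:\,\vc g_{\mathbb{AK}}(\vc q)=0,\ \vc N_{\mathbb{AK}}(\vc q)^T\vc M^{-1}\vc p=0\}$ --- precisely the set carved out by the discrete smoothness predicates at time $t$, so that domain and codomain coincide and ``symplectic'' is a well-posed assertion.

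The second step is the one-form computation. Solving (\ref{eq:ds1}) for $\vc p^t$ and (\ref{eq:ds3}) for $\vc p^{t+1}$ and using $D_1L_d\,d\vc q^t+D_2L_d\,d\vc q^{t+1}=dL_d(\vc q^t,\vc q^{t+1})$ gives
\[
\vc p^{t+1}d\vc q^{t+1}-\vc p^t d\vc q^t \;=\; dL_d(\vc q^t,\vc q^{t+1}) \;+\; \lambda^T\vc N(\vc q^t)^Td\vc q^t \;+\; \mu^T\vc N(\vc q^{t+1})^Td\vc q^{t+1}.
\]
Each entry of $\vc N(\vc q^t)^Td\vc q^t$ carrying a nonzero $\lambda_i$ is $\nabla g_i(\vc q^t)^Td\vc q^t=d\big(g_i(\vc q^t)\big)$ for an $i$ active at $\vc q^t$, whose pullback to $\vc P_{\mathbb{AK}}$ vanishes since $g_i(\vc q^t)\equiv0$ there; the $\mu$-term vanishes identically by the same argument. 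Hence on $\vc P_{\mathbb{AK}}$ the left side equals $dL_d$, it is therefore closed, and taking exterior derivatives yields $d\vc q^{t+1}\wedge d\vc p^{t+1}=d\vc q^{t}\wedge d\vc p^{t}$: the step preserves the canonical symplectic form, i.e.\ it is symplectic. (Equivalently, one may simply invoke the known symplecticity of constrained discrete variational integrators, e.g.~\citep{MarsdenWest01A,Leimkuhler04}.)

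The third step is discrete Noether. If $L_d$ and the active constraint functions $g_i$ (hence $\vc A$) are invariant under a Lie group $G$ acting on configuration space, contract the displayed identity with the infinitesimal generator $\xi_Q$ of a Lie-algebra element $\xi$: the right side drops, since $dL_d\cdot(\xi_Q,\xi_Q)=0$ by invariance of $L_d$ and $\nabla g_i(\vc q)^T\xi_Q(\vc q)=0$ by invariance of $g_i$, leaving $\langle\vc p^{t+1},\xi_Q(\vc q^{t+1})\rangle=\langle\vc p^t,\xi_Q(\vc q^t)\rangle$, i.e.\ the discrete momentum map is conserved by each step. Since the full integrator is a composition of such one-step maps --- even across changes of active set, which by construction occur only at interval endpoints --- and compositions of symplectic maps are symplectic with a common conserved momentum map, the theorem follows.

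I expect the bookkeeping of steps two and three to be routine, as it reproduces the usual variational-integrator argument. The real obstacle is step one: rigorously justifying that on a discrete-smooth interval the active set is genuinely frozen and that the nonsmooth indicator $I_{\vc A}$ may be replaced by the smooth bilateral constraint $I_{\{\vc g_{\mathbb{AK}}=0\}}$ there, and verifying that (\ref{eq:ds4}) together with the discrete left-limit tangency (\ref{eq:discrete_left_smoothness}) is exactly what makes the map act between two copies of the same constrained phase space $\vc P_{\mathbb{AK}}$. Without this the statement ``symplectic'' is not even meaningful, and it is precisely here that the nonstandard DELI causality --- corrective forces generated by $\vc N(\vc q^t)$ but constraints enforced at $\vc q^{t+1}$ --- must be reconciled with the RATTLE template.
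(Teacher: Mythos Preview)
Your proposal is correct and follows essentially the same strategy as the paper: reduce the Discrete-Smooth Integrator step to an equality-constrained discrete variational (RATTLE-type) step, and then read off symplecticity and momentum conservation from the standard holonomic theory of~\citet{MarsdenWest01A}. The paper, however, dispatches your self-identified ``real obstacle'' in step one more cleanly than your heuristic appeal to a frozen active set: it introduces an \emph{Oracle Set} $\mathbb{O}(\vc q^{t+1})=\{i:g_i(\vc q^{t+1})=0\}$, intersects it with the smooth set to obtain $\mathbb{I}=\mathbb{O}(\vc q^{t+1})\cap\mathbb{S}(\vc q^t,\vc p^{t^+})$, and then simply observes that the optimal $(\lambda^-,\mu)$ from (\ref{eq:ds1})--(\ref{eq:ds4}) also solve the bilateral system with $\vc g_{\mathbb{I}}(\vc q^{t+1})=0$ replacing the complementarity, after which symplecticity and momentum conservation are quoted directly from~\citet[(3.5.2a)--(3.5.2d)]{MarsdenWest01A}, with the smooth-set restriction furnishing exactly the initial position/momentum feasibility needed for that citation. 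Your steps two and three reprove by hand what the paper cites; they are fine but unnecessary once the reduction is in place.
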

{\it{Proof.}}
See \S\ref{sec:last}.
\\
By construction, the method likewise preserves the subspace tangency of discrete smooth trajectories when moving along, leaving and approaching constraint boundaries. Likewise, the integrator trivially reduces to the unconstrained symplectic method, given by the discrete Lagrangian, on the interior of the admissible set.

\subsection{Spring and Sphere Example}
\label{sec:smooth_ex}
\begin{figure}
[t]
\centering
\begin{tabular}{ccc}
\includegraphics[width=0.15\hsize]{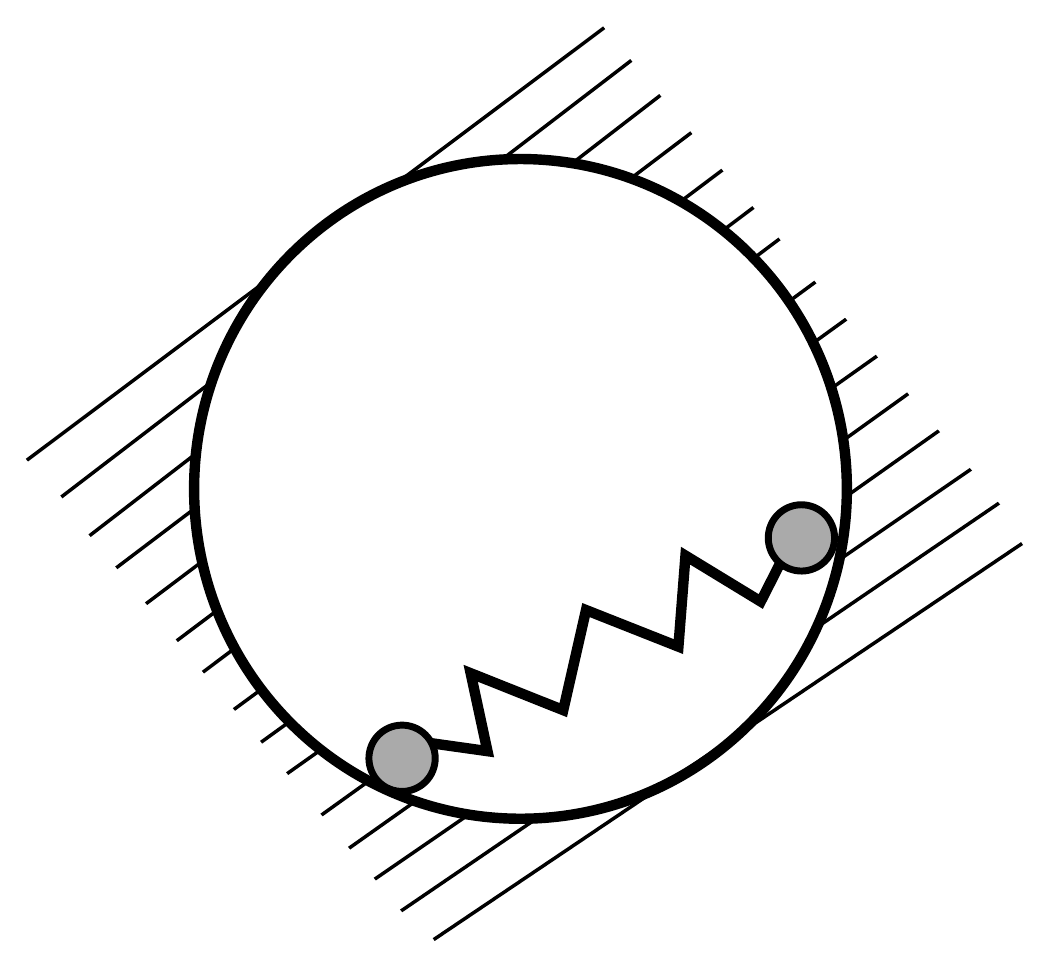}  &
\includegraphics[width=0.285\hsize]{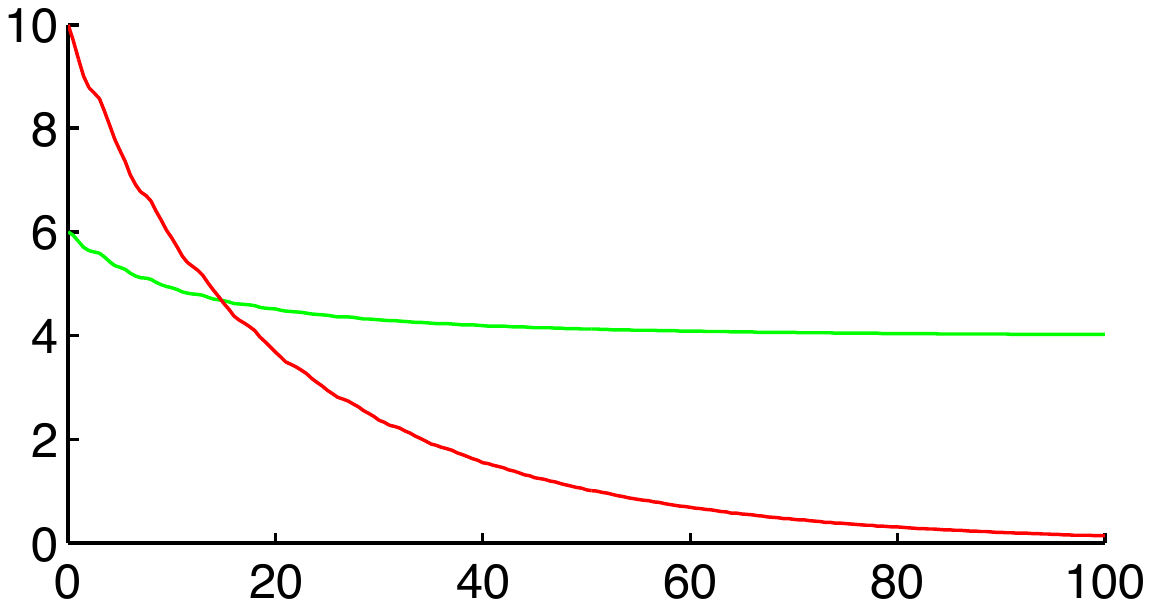}  &
\includegraphics[width=0.535\hsize]{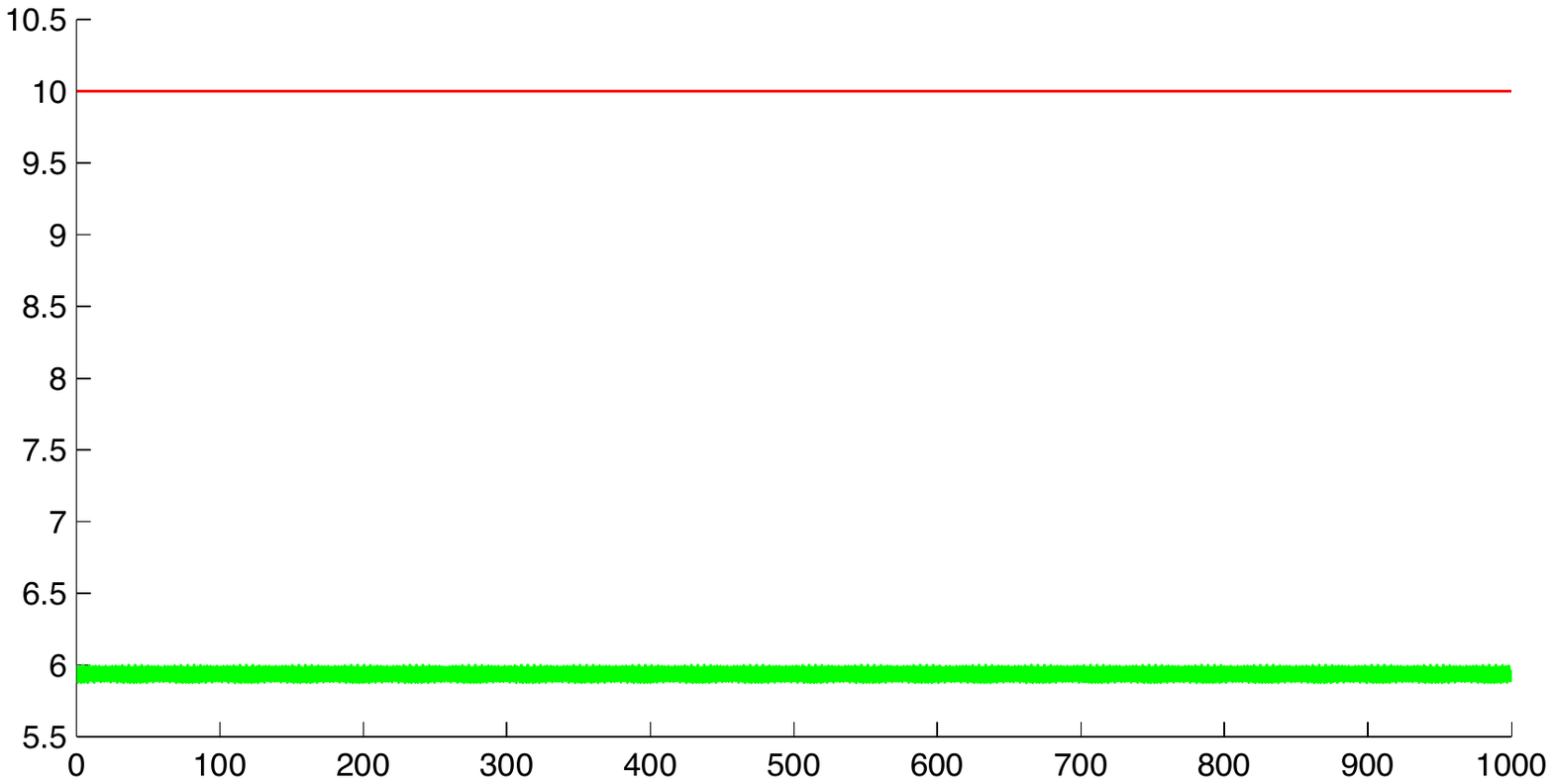} 
\\ (a) & (b) & (c)
\end{tabular}
\vspace{-2mm}
\caption{ 
As illustrated in (a), we consider a smooth trajectory of a constrained system composed of a simple, four-dimensional, two-point, mass-spring system, subject to a circle inequality constraint and an outward-pulling, radial potential. We initialize both particles to a rotational trajectory, initially tangent to the boundary and note that all constraints and potentials, in this example, are rotationally invariant and thus angular momentum should be preserved. Both methods investigated here begin with implicit-midpoint as the base, unconstrained numerical method and are stepped at $h = 5 \times 10^{-1}$. In (b) we plot the energy (in green) and the angular momentum (in red) of the direct, end-point constrained method for the first 100 units of simulation time. We note the decay in both angular momentum and energy. In (c) we similarly plot the (approximate) preservation of energy (in green) and (exact) conservation of angular momentum (in red) obtained by our Discrete Smooth Integrator out to 1000 units of simulation time. See \S \ref{sec:smooth_ex} below for further discussion.  \label{fig:smooth_sphere}}
\end{figure} 

As an initial consideration of the Discrete Smooth Integrator and in comparison to the direct-substitution schemes discussed in \S\ref{sec:direct_methods}, we examine the behavior of a simple, smooth trajectory system. We consider the simple example of a four-degree of freedom, mass-spring system. Configuration is given by the positions of two mass particles,  $\vc q = (\vc q_1,\vc q_2) \in \mathbb{R}^4$, connected by a spring. The system is subject to a standard spring potential between particles and an outward pulling, radial potential, i.e., $V(\vc q) = \sum_{i=1}^2 25/{(\vc q_i^T \vc q_i})  +  \frac{1}{2}(\parallel \vc q_1-\vc q_2 \parallel - l)^2$, with $l = 2\sqrt{2}$. In addition, we apply inequality constraints that require the particles to lie in a sphere, $\vc g_i(\vc q_i) = \parallel \vc q_i \parallel - r \geq 0$, of radius $r = 5$. See Figure \ref{fig:smooth_sphere} (a). Note that all constraints and potentials, in this example, are rotationally invariant and thus angular momentum should be preserved by the system.

We initialize the example system to a smooth, rotational trajectory: both particles start, at time $t=0$, on the sphere boundary, $\vc q_1(0) = (4,-3)$, $\vc q_2(0) = (3,-4)$, with corresponding unit length momenta that point counter-clockwise and tangent to the boundary. This initiates an on-boundary, counter-clockwise rotation of the particles, as well as an oscillatory motion between them.
Both methods investigated here begin with implicit-midpoint as the base, unconstrained numerical method and are stepped at $h = 5 \times 10^{-1}$.

 In Figure \ref{fig:smooth_sphere} (b) we plot the energy (in green) and the angular momentum (in red) of the direct, end-point constrained method for the first 100 units of simulation time. We note that the decay in both angular momentum and energy for the direct method corresponds to the destruction of both the rotational and oscillatory modes of the system. 

In Figure \ref{fig:smooth_sphere} (c) we similarly plot the (approximate) preservation of energy, in green, and the (exact) conservation of angular momentum, in red, obtained by our Discrete Smooth Integrator out to 1000 units of simulation time. Throughout this simulation both particles maintain the smooth, on boundary, rotational and oscillatory modes of the system. 

Finally we note that collision integrators are not suitable for application in such examples since they are, as discussed above, formulated to resolve only nonsmooth boundary interactions. Nevertheless if we do apply a collision integration scheme in this smooth setting (effectively applying a micro-collision model \citep{Brogliato99}), we observe that while energy behavior is not compelling (see \S \ref{sec:pogo} below for a similar example) the trajectory is also qualitatively wrong; the nonsmooth reflections applied by the collision integrator introduce an artificial, nonsmooth, normal, bouncing mode (off the sphere boundary) that increases over time and eventually drives the system to instability.

\section{Discrete Nonmooth Setting}
\label{sec:discr_nonsmooth}

If the discrete left smoothness predicate is \emph{not} satisfied at time $t$, the left limit velocity must lie strictly in the negated tangent cone such that $ \vc M^{-1} \vc p^{t}  \notin  \vc T(\vc q^t)$. A nonsmooth jump is then required to satisfy admissibility. Following our discussion in \S \ref{sec:nonsmooth_motion}, we apply discrete, generalized, energy preserving reflections to resolve these cases. Later, in \S \ref{sec:gr}, we will focus on \emph{how} we derive such discrete generalized reflection operators. First, however, we will consider the critical issue of \emph{when} such reflections 
should be applied.

 \subsection{Extended Reflections}
 \label{sec:ext_r}
We will, in this section, temporarily ignore smooth motion and presume that all encounters with the admissible set boundary are nonsmooth and are always resolved by the application of some energy preserving, generalized reflection (to be defined below in \S \ref{sec:gr}). Presuming that we apply an 
integrator that reduces to a symplectic method in the unconstrained case, standard backwards error analysis~\citep{Hairer03} guarantees that, away from boundaries, there always exists a trajectory shadowing numerical Hamiltonian. In an ideal setting each such unconstrained, symplectic time-step would end either in the interior or \emph{exactly} on the boundary of the admissible set. 

Correspondence to the piecewise-smooth, numerical Hamiltonian, discussed above in \S\ref{sec:gcdvi}, then follows. 
Of course, such nice behavior can not be expected in our generally unsynchronized universe. Instead, we are faced with the likelihood that most, if not all, new constraint boundaries will be encountered midstep. 

The standard approach of existing collision integration schemes (see \S \ref{sec:hybrid}) is to take unconstrained symplectic steps until the admissible boundary is crossed. When a boundary is crossed, collision integrators return 
to the beginning of the time-step and then repeat the forward step using \emph{just} the time-step fraction that lands configuration exactly on the active boundary. A energy-preserving reflection is then performed using current state and then, finally, the integrator completes the remaining portion of the full time-step. 

A simple alternate approach, that, to the best of our knowledge, we introduce for the first time here, is to \emph{always take full time-steps}, using 
augmented or \emph{extended reflections} to incorporate otherwise missed constraint events. 
More concretely, we apply smooth symplectic steps whenever the admissible boundary is not encountered from a normal approach. Anytime current configuration, $\vc q^t$, indicates that a constraint is active and/or a predictor configuration, $\vc q^p$, indicates that a constraint violation is imminent or alternately, depending on the choice of predictor, has already occurred, we perform a generalized reflection at the beginning (equivalently end) of the time step, with respect to both active constraints and \emph{predicted} active constraints. Once the reflection is applied we then take the full smooth, symplectic step. This ensures correspondence between numerical Hamiltonians (as discussed above in \S\ref{sec:gcdvi}), at the cost of inducing errors in the representation of the local constraint boundary geometry. 

\begin{figure}
[t]
\centering
\begin{tabular}{cccc}
\includegraphics[height=0.25\hsize]{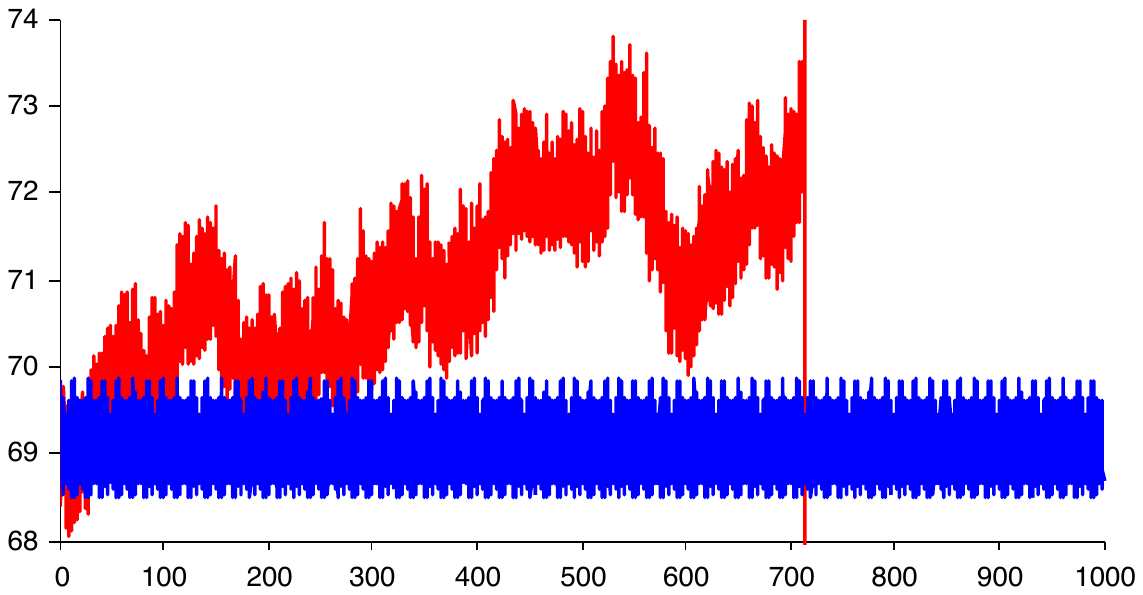}  &
\includegraphics[height=0.2\hsize]{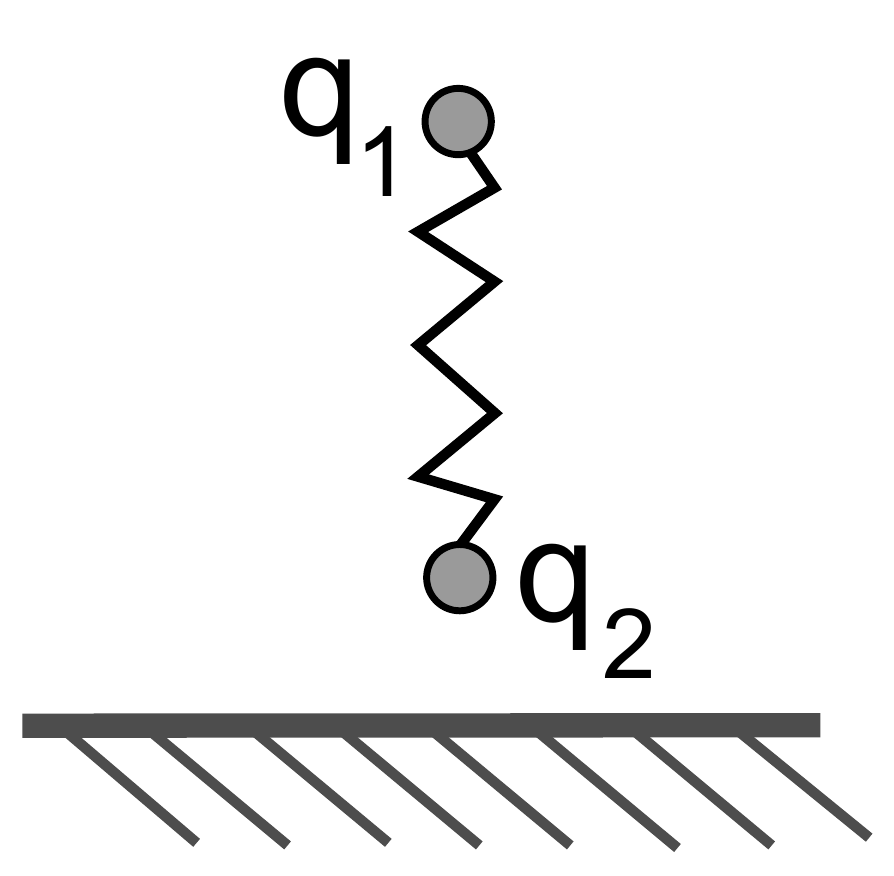}
\\ (a) & (b) 
\end{tabular}
\vspace{-2mm}
\caption{ In (a) we plot the energy for the same simple, two-dimensional, two-point, mass-spring system, shown in (b), and discussed in Figure \ref{fig:pogo_Verlet} and in \S \ref{sec:pogo} below. Here we use exactly the same physical parameters and algorithms as in the original example, with the exception that reflections for both the collision integrator and our extended reflection method are formulated to conserve Verlet's second-order numerical Hamiltonian, rather than the standard continuous Hamiltonian. We plot the energy for the first 1000 units of simulation time for both the collision integrator, in red, and our extended reflection scheme, in blue. Note that, after $t=713$, the energy in the collision integration system blows up. 
\label{fig:pogo_StabVerlet}}
\end{figure}

\subsubsection{Pogo-Stick Example} 
\label{sec:pogo}
To illustrate our new approach and to compare it with collision integration schemes, we consider the simple example of a two-degree of freedom, mass-spring system, subject to a single ground constraint. See Figure \ref{fig:pogo_StabVerlet} (b). We apply St\"{o}rmer-Verlet as the base, unconstrained numerical method. Configuration is given by the positions of two mass particles, vertically oriented, such that the positions of the top and bottom mass particles are given by $\vc q = (q_1, q_2)^T \in \mathbb{R}^2$. The system is then subject to gravity and a ground-plane constraint on the bottom particle, $\vc g( \vc q) =  q_2 \geq 0$.  We set $m =1$ for both mass particles. The linear spring stiffness is set to $k=10$, spring length is $l = 5$, gravity is $g = 9.8$ and the base, unconstrained step-size is $h = 10^{-1}$. 

In Figure \ref{fig:pogo_Verlet}(a) we plot in red the energy of the standard collision integration approach and in blue we plot the energy of our new extended reflection approach using a simple forward time-step predictor. In this first example both methods apply reflections that conserve the continuous Hamiltonian. 
The collision integration method, however, applies reflections at times of contact with the floor, while, in this simple example, our extended reflection method reduces to simply applying reflections at the beginning of all time-steps that would otherwise result in the bottom mass-particle passing through the ground-plane.

In the first time-period, shown in Figure \ref{fig:pogo_Verlet}(a), energy remains fully bounded for the extended reflection method proposed here and, in particular,  varies in the expected manner of standard symplectic methods. Over the same period the hybrid solution experiences energy growth. After $t = 546$, energy in the collision integrator enters a highly oscillatory regime causing the solution to blow up. In Figure \ref{fig:pogo_Verlet}(b) we continue the energy plot of our extended reflection approach out to 100,000 units of simulation time.

Of course we are not limited to continuous Hamiltonian conserving reflections. We may, for instance, expect an improvement in behavior by applying reflections that more closely conserve the shadowing numerical Hamiltonian. In Figure \ref{fig:pogo_StabVerlet} (a) we plot the energy of exactly the same two simulations discussed above \emph{except}, in this example, we apply reflections that preserve St\"{o}rmer-Verlet's second-order numerical Hamiltonian~\citep{Bond07}. We observe a small improvement in the bounds of the energy envelope for our extended reflection method. Similarly, we note that, although the collision integration scheme still experiences energy growth, the obtained solution does not blow up until later, at $t  = 713$.  Note that we could also consider applying higher order symplectic Gauss methods for unconstrained steps leading to collisions~\citep{Bond07}. Given the persistence of constraint events, however, this is effectively the same as simply switching to a higher-order, base, unconstrained integrator for both methods.

\subsection{Discrete Limit Momenta}

We next observe that applying the constraint force $\vc N(\vc q^t) \lambda$ in   (\ref{eq:DVI_nonsmooth}) with respect to the \emph{next} time increment, $t+1$, is indistinguishable from directly modifying the discrete momentum, $\vc p^t$, at time $t$. We thus decompose the constraint multipliers $\lambda$ into right components, $\lambda^{+} = \lambda^{t^+}$, given by a previous nonsmooth reflection, and left components, $\lambda^- = \lambda^{(t+1)^-}$, given by the current discrete-smooth step.
Applying an energy conserving, generalized reflection operator at the \emph{end} of time-step $t$, we then correspondingly 
obtain discrete analogues of the left and right limit momenta,
\begin{align}
\vc p^{t^-}  &= \vc  p^t,\\  
\vc p^{t^+}  &=  \vc p^t + \vc N(\vc q^t) \lambda^{+}.
\end{align}
Here $\vc N(\vc q^t) \lambda^{+}$ gives the impulse generated by the reflection of $\vc p^{t^-}$.
Consistent with our discussion in \S\ref{sec:ext_r}, the above partition of $\lambda$ requires generalized reflections to be applied exclusively at the beginning (or equivalently end) of each time-step. Given our above analysis, we then include the extended reflection in our definition of left and right discrete momenta. 

At the beginning of each time step, we augment the active set with the set of all constraints that would be activated or violated by a predicted configuration. More concretely, we predefine a consistent \emph{predictor configuration}, $\vc q^p$,  and then define an \emph{extended active set}, 
\begin{align}
\label{eq:extendedActiveSet}
\mathbb{A}(\vc q^t, \vc q^p) \defeq \Big\{ i : \> \> \> g_i (\vc q^p) \leq 0, \> \> \> i \in \{0,..,m\}  \Big\} \cup  \Big\{ j : \> \> \> g_j (\vc q^t) = 0, \> \> \> j \in \{0,..,m\}  \Big\}.
\end{align}
The corresponding \emph{extended} discrete left and right momenta are then given by
\begin{align}
\label{eq:discr_left}
\vc p^{t^-}  &= \vc  p^t,\\  
\label{eq:discr_right}
\vc p^{t^+}  &=  \vc p^t + \vc G_{ \tiny \mathbb{A}} (\vc q^t) \lambda^{+}.
\end{align}

\subsection{Discrete Generalized Reflections}
\label{sec:gr}

To compute the discrete reflection term given in (\ref{eq:discr_right}) above, we require a discrete generalized reflection operator, $\textbf{R}$, of the form
\begin{align}
\vc p^+ = \vc p^- + \vc G_{\mathbb{K}}(\vc q) \lambda = \textbf{R}(\vc q, \vc p^-, \mathbb{K}, E),
\end{align}
that defines a \emph{unique} reflection with respect to an active constraint set $\mathbb{K}$. We require that any such reflection operator satisfy the discrete analogues of the \emph{jump conditions} 
from \S\ref{sec:nonsmooth_motion}, 
given by the discrete kinematic feasibility condition, 
\begin{align}
\label{eq:discr_jump1}
 \vc G_{\mathbb{K}}(\vc q)^T \vc M^{-1} \big( \vc p^- + \vc G_{\mathbb{K}}(\vc q) \lambda \big) \geq 0,
\end{align}
the discrete Euler-Lagrange inclusion condition,
\begin{align}
\label{eq:discr_jump2}
\vc p^+ - \vc p^- =  \vc G_{\mathbb{K}}(\vc q) \lambda, \quad \lambda \geq 0.
\end{align}
and conservation of a specified energy function, $E$, (e.g., the discrete, numerical, or continuous Hamiltonian) such that
\begin{align}
\label{eq:discr_jump3}
E(\vc q,\vc p^+) = E(\vc q, \vc p^-).
\end{align} 

As in the time-continuous case, whenever multiple active constraints are considered, i.e., $|\mathbb{K}| > 1$, this problem is underdetermined. From the infinitely many possible solutions, we select the one obtained by applying a discrete adaptation of the generalized reflection operator of \citet{KaufmanPaiGrinspun10}. This 
operator simultaneously guarantees uniqueness, determinism, symmetry-preservation, and satisfies all jump conditions when multiple constraints are active. In \S\ref{sec:discrete_refl} we briefly derive this discrete extension and provide pseudo-code for the implementation we employ. %

\section{Generalized Discrete Setting}
\label{sec:full}
Returning to the full setting where both discrete-smooth and discrete-nonsmooth modes are possible, we finally consider a fully general DELI-based system. Starting with the DELI system given by (\ref{eq:VI_nonsmooth_position}), (\ref{eq:VI_pos_constr}) and (\ref{eq:gcdvi_momentum_map0}), recall that in \S \ref{sec:discr_smooth} we presumed, \textit{a priori},  the precondition of the left discrete-smoothness criteria to obtain the Discrete-Smooth Integrator given by  (\ref{eq:ds1}) through (\ref{eq:ds4}).  In the general setting, however, at any time a subset of the active constraints may be discrete-smooth, while their complement may be discrete-nonsmooth. In such cases we can expect that momentum will be negative with respect to some constraint gradients, and thus nonsmooth modes will be present. 

To partition out constraint subsets that satisfy the discrete-smooth precondition at time $t$, we define the \emph{smooth constraint set} as the set of discrete-smooth constraints, with respect to the discrete-right time-limit at $t$,
\begin{align}
\label{eq:smoothSet}
\mathbb{S}(\vc q^t, \vc p^{t^+}) \defeq \Big\{ i : \> \> \> \nabla g_i (\vc q^t)^T \vc M^{-1} \vc p^{t^+} = 0 \> \> \> \text{and} \> \> \> g_i(\vc q^t) = 0, \> \> \> i \in \{0,..,m\}  \Big\}.
\end{align}
Then, adding the discrete left and right jump terms given by (\ref{eq:discr_left}) and (\ref{eq:discr_right}), we include reflections and obtain a full, discrete, generalized extension of DELI for inequality-constrained systems,
\begin{align}
\vc p^{t^+}  &=  \vc p^t + \vc G_{ \tiny \mathbb{A}} (\vc q^t) \lambda^{+},\\
 D_1L_d (\vc q^t, \vc q^{t+1}) + \vc p^{t^+}  + \vc N_{ \tiny \mathbb{S}} (\vc q^t) \lambda^- &= 0,\\
0 \leq \lambda^- \perp \> \vc g_{ \tiny \mathbb{S}}(\vc q^{t+1}) &\geq 0,\\
\vc p^{t+1} &= D_2 L_d (\vc q^{t}, \vc q^{t+1}) + \vc N_{ \tiny \mathbb{S}} (\vc q^{t+1}) \> \> \mu,\\
\vc N_{ \tiny \mathbb{S}} (\vc q^{t+1})^T \vc M^{-1} \vc p^{t+1} &= 0.
\end{align}

\subsection{Algorithm}
\label{sec:alg}
With the above developments in place, our proposed discrete, generalized variational integrator (GVI) for fully generalized inequality constrained systems is then given by the pseudocode below in Algorithm 1. A GVI takes as input the current state, the selected energy function and the position predictor. It then computes the GVI step and outputs the next state.

\begin{algorithm}[h]
\caption{${\tt GVI }(\vc q, \vc p,  E, \vc q^p)$  \Comment{input:  $\vc q^t$, $\vc p^t$, energy function, position predictor}}\label{alg:gvi_map} 
\begin{algorithmic}[1]
\State $\vc q^{\tiny old} \leftarrow \vc q$ \Comment{cache current position}
\State $\mathbb{A} \leftarrow \Big\{ i : \> \> \> g_i (\vc q^p) \leq 0, \> \> \> i \in \{0,..,m\}  \Big\} \cup  \Big\{ j : \> \> \> g_j (\vc q) = 0, \> \> \> j \in \{0,..,m\}  \Big\}$ \Comment{compute extended active set}
\State $\vc p \leftarrow \textbf{R}(\vc q, \vc p, \mathbb{A}, E)$ \Comment{apply generalized reflection}
\State $\mathbb{S} \leftarrow  \Big\{ i : \> \> \> \nabla g_i (\vc q)^T \vc M^{-1} \vc p= 0 \>  \> \> \text{and} \> \> \> g_i(\vc q) = 0, \> \> \> i \in \{0,..,m\}  \Big\}. $ \Comment{compute smooth set}
\State $\vc q \leftarrow \Big\{ \vc x :  \> \> \> \> D_1L_d (\vc q, \vc x) + \vc p  + \vc N_{ \tiny \mathbb{S}} (\vc q) \lambda = 0, \> \> \> \>   0 \leq \lambda \perp \> \vc g_{ \tiny \mathbb{S}}(\vc x) \geq 0 \Big\}$ \Comment{apply position update}
\State $\vc p \leftarrow \Big\{ \vc y :  \> \> \> \>  \vc y = D_2 L_d (\vc q^{\tiny old}, \vc q) + \vc N_{ \tiny \mathbb{S}} (\vc q) \> \> \mu,\> \> \> \> \vc N_{ \tiny \mathbb{S}} (\vc q)^T \vc M^{-1} \vc y = 0 \Big\}$  \Comment{apply momentum update}
\State \Return $ \vc q, \vc p$ \Comment{ output:   $\vc q^{t+1}$, $\vc p^{t+1}$}  
\end{algorithmic}
\end{algorithm}

\subsection{Bilateral Constraints} 
\label{sec:eq}
Finally, we now further presume that, in addition to the set of inequality constraints, we also wish to enforce a set of equality constraints, as given by  (\ref{eq:equality}). When we consider equality constraints, we observe that the admissible set is now given by a projection of the inequality constraints onto the manifold defined by the additional equality constraints. In turn, this implies that normal cones are now given by the negative span of the active inequality constraint gradients, \emph{projected down to the cotangent spaces} of the equality-constraint manifold. 

It then remains to note that, in the generalized constraint setting, we treat equality constraints as active at all times. We then can add the equality constraints, $\vc f$, and their corresponding gradients, which we will denote by $\vc F$, to our discrete-smooth formulation, during all time-steps. 

For our GVI methods this requires three basic changes. First, for arbitrary inequality constraint subsets, $\mathbb{K}$, the corresponding active inequality gradient subset must be projected onto the local manifold's cotangent space at $\vc q$. Thus, when equality constraints are applied, we  project\footnote{Here $\cdot^+$ indicates the standard matrix pseudoinverse.} the active constraint gradient and redefine as 
\begin{align}
\vc N_{\mathbb{K}}(\vc q) \defeq  \Big( I - \big(\vc F(\vc q)^T  \vc M^{-1} \vc F(\vc q) \big)^+ \vc F(\vc q) \> \vc F(\vc q)^T  \vc M^{-1} \Big) \> \>  \vc G_{\mathbb{AK}}(\vc q), \quad \mathbb{AK} = \big\{ i : \>  \> g_{i}(\vc q) = 0, \> \> i \in \mathbb{K} \big\}.
\end{align}
Next, we simply modify the two constrained updates in the above pseudo-code, given in Algorithm 1, to include the equality constraints. We then obtain a new generalized equality-inequality position update step (to replace line 5 in the pseudocode):
\begin{align}
\vc q \leftarrow \Big\{ \vc x :  \> \> \> \> D_1L_d (\vc q, \vc x) + \vc p  + \vc N_{ \tiny \mathbb{S}} (\vc q) \lambda + \vc F(\vc q) \nu= 0, \> \> \> \>   0 \leq \lambda \perp \> \vc g_{ \tiny \mathbb{S}}(\vc x) \geq 0, \> \> \> \> \vc f(\vc x) = 0 \Big\}
\end{align}
and a corresponding momentum update step (to replace line 6  in the pseudocode):
\begin{align}
\vc p \leftarrow \Big\{ \vc y :  \> \> \> \>  \vc y = D_2 L_d (\vc q^{\tiny old}, \vc q) + \vc N_{ \tiny \mathbb{S}} (\vc q)  \mu + \vc F(\vc q) \xi ,\> \> \> \> \vc N_{ \tiny \mathbb{S}} (\vc q)^T \vc M^{-1} \vc y = 0, \> \> \> \> \vc F(\vc q)^T \vc M^{-1} \vc y = 0 \Big\}.
\end{align}
Otherwise, the remaining pseudocode and all other previous discussion, derivation and guarantees (smooth-interval symplecticity, momentum conservation, etc.) continue to hold.

\section{Further Numerical Examples}
\label{sec:ex}
To understand and examine the behavior of the algorithms proposed here, we investigate implementations of GVI over a range of additional numerical examples. Although a wide variety of predictors can be employed, in the following examples we exclusively apply the simple, unconstrained, forward predictor,  
\begin{align}
\vc q^p = \{ \vc x : \> \> \>   D_1L_d (\vc q^t, \vc x) +\vc  p^t = 0 \},
\end{align}
and reserve an investigation of alternate predictors for later research.
We implement implicit-midpoint and St\"{o}rmer-Verlet based GVI-schemes using the implicit midpoint quadrature,
\begin{align}
L_d (\vc q^k, \vc q^{k+1} ) = h L( \frac{\vc q^{k+1} + \vc q^k}{2},\frac{\vc q^{k+1} - \vc q^k}{h}), 
\end{align}  
and the Verlet quadrature, 
\begin{align}
L_d (\vc q^k, \vc q^{k+1} ) = \frac{h}{2} \> \>\Big( \> \>  L( \vc q^k,\frac{ \vc q^{k+1} - \vc q^k}{h}) +  L( \vc q^{k+1},\frac{ \vc q^{k+1} - \vc q^k}{h}) \> \> \Big), 
\end{align}
respectively.

\begin{figure}
[t]
\centering
\includegraphics[width=1\hsize]{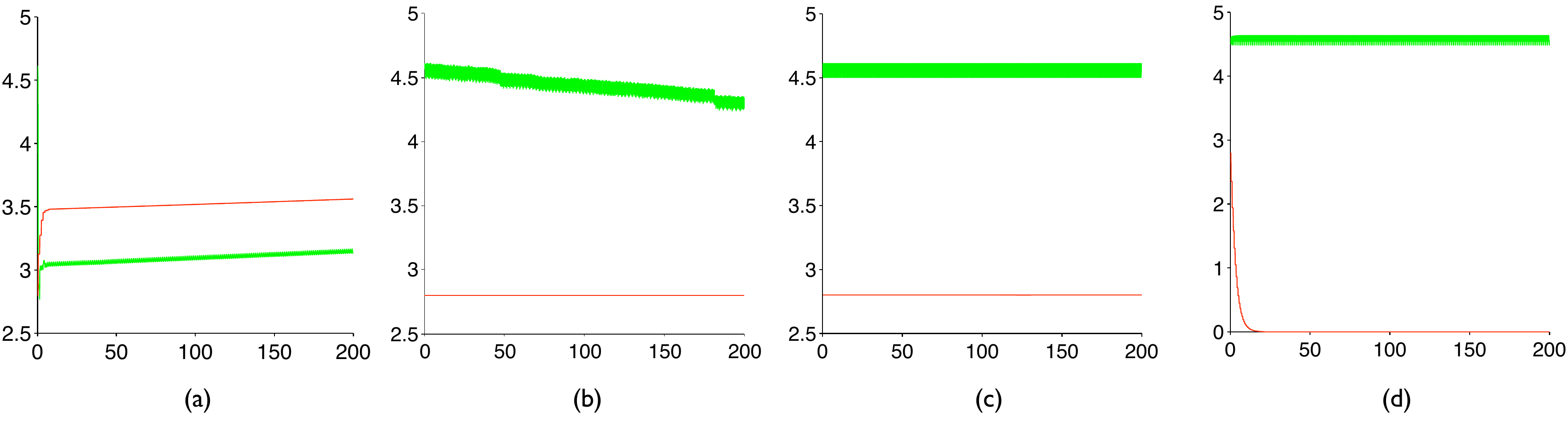}
\caption{{\bf Nonlinear Oscillator Example:} In these figures we examine the behavior of various algorithms on the nonlinear oscillator example in two dimensions.  All methods shown here begin with implicit midpoint as the base, unconstrained numerical method. See Figure \ref{fig:pogo_StabVerlet} (c) and \S\ref{sec:nonlnrOsc} for further  details. Green lines and red lines plot energy and angular momentum respectively. Note that both energy \emph{and} angular momentum should be preserved in this example.  In {\bf (a)} we plot the endpoint-constrained direct method. In {\bf (b)} we plot the standard hybrid method. In {\bf (c)} we plot the GVI integrator, where energy and angular momentum are both preserved long-term. Finally, in {\bf (d)}, to highlight the importance of enforcing nonlinear constraints appropriately, we plot the GVI integrator using linearized constraints. Note that, for this last example, the GVI method continues to preserve energy; however, linearized constraints will no longer preserve rotational symmetry and thus, correspondingly, angular momentum is no longer preserved. 
\label{fig:oscillator}}
\end{figure}

\subsubsection{Nonlinear Oscillator}
\label{sec:nonlnrOsc}
To examine a nonintregable system and momentum conservation properties, we next consider the example of a nonlinear oscillator in  $\mathbb{R}^2$. We start with two spheres in the plane, with respective positions $\vc q_1$ and $\vc q_2$ in $ \mathbb{R}^2$ and radii $r_1$ and $r_2$. The full configuration is then $\vc q = (\vc q_1, \vc q_2)^T$ with a corresponding nonlinear potential for the oscillator 
\begin{align}
V(\vc q) = \sum_{i=1}^2 \parallel \vc q_i \parallel^2 (\parallel \vc q_i \parallel^2 - 1)^2.
\end{align}
See 
the inset figure below.
We then impose the hard-sphere, non-overlap constraint between the two spheres 
using the nonlinear, inequality constraint 
\begin{align}
\label{eq:nonlnrOscConsraint}
\vc g(\vc q) = \parallel \vc q_1 - \vc q_2 \parallel - r_1 - r_2 \geq 0.
\end{align}
Note that both the potential \emph{and} constraint in this example are invariant with respect to rotations and thus angular momentum should be conserved.
%
\begin{floatingfigure}[r]{1in}
\vspace{-.7mm}
\hspace{-8mm}
\includegraphics[width=0.9in]{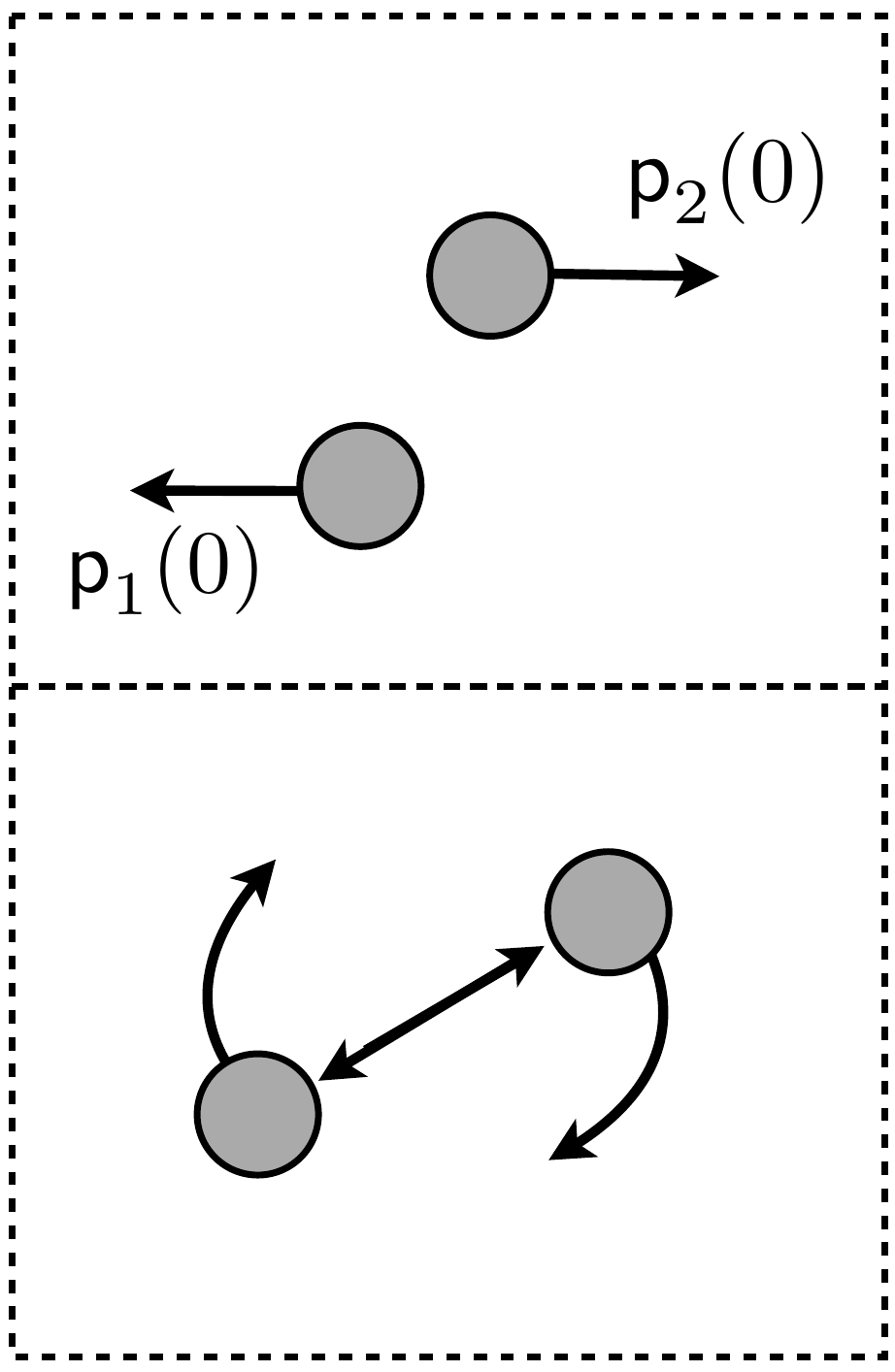}
\vspace{-5mm}
\end{floatingfigure}
%
For this example all methods considered begin with implicit-midpoint as the base, unconstrained numerical method. We then set the step-size to $h =10^{-1}$, mass and sphere radii are set to $m = 1$ and $r = 1$, respectively, for both particles. The starting configuration is given such that the particles are initially separated at the non-equilibrium position, $\vc q_1(0) =  (0, -r-4 \times 10^{-1})^T$, $\vc q_2(0) =  ( 0, r+4 \times 10^{-1})^T$, while the initial momentum initiates a counter-clockwise rotation with $\vc p_1(0) = (1,0)^T$ and $\vc p_2(0) = (-1,0)^T$. The system should then, as illustrated to the right,
enter into a relative equilibrium composed of both periodic bouncing between the particles combined with a global rotation of the full system about the origin.

We compare the results of the endpoint-constrained direct method~\footnote{The midpoint-constrained direct method ( i.e., imposing $\vc g(\frac{\vc q^{t+1} + \vc q^{t}}{2}) \geq 0$) was highly unstable for this example.} (i.e., imposing $\vc g(\vc q^{t+1}) \geq 0$), the standard hybrid method, GVI applied using the full nonlinear constraint given in  (\ref{eq:nonlnrOscConsraint}), and GVI applied using a linearization of the constraints. We include this last example to highlight the potential pitfalls associated with applying constraint linearization since this a popular simplifying strategy in the contact mechanics literature. In Figure \ref{fig:oscillator} we plot the obtained energy in green and angular momentum in red for each method. 

In Figure \ref{fig:oscillator}(a), as might be expected, the endpoint-constrained direct method quickly dissipates all normal modes of oscillation between the two spheres. After this initial dissipation, the spheres enter a relative equilibrium in which they orbit, in constant contact, with a purely smooth motion. This latter mode corresponds exactly to an endpoint-constrained direct method applied to a smooth system. As discussed in Section \ref{sec:TandN}, direct methods applied to smooth motion are likewise not symplectic and thus are still not guaranteed to preserve energy. This is reflected in the plot, where the system continues to generate correspondingly poor energy behavior. In particular, for this latter phase, we see slow but continual energy growth.

In Figure \ref{fig:oscillator}(b) the standard collision integration method likewise displays poor energy behavior. As discussed in Section \ref{sec:hybrid}, energy drift (in the form of dissipation in this case) is generated, in collision integrators, by the interleaving of reflections with variable step-size, unconstrained integration steps. The reflection impulses, however, are applied along constraint gradients evaluated at the time of constraint activation and thus, despite the poor energetic behavior, momentum \emph{is} conserved due to the rotational invariance of the constraint function. 

In Figures \ref{fig:oscillator}(c) and (d) we plot GVI using the true nonlinear constraints and linearized variants respectively. Our GVI method, plotted in Figure \ref{fig:oscillator}(c), enforces the true nonlinear constraints and correspondingly we note that both approximate energy \emph{and} exact angular momentum are conserved long-term, while the combined nonsmooth, rotational \emph{and} oscillatory modes are 
all
preserved.  Finally, we note that, when applied to linearized constraints, as in Figures \ref{fig:oscillator}(d), GVI continues to preserve energy but, due to constraint linearization, rotational invariance no longer holds and thus angular momentum is no longer conserved. This highlights the cost of linearizing constraints when applying geometric methods.

\begin{figure}
[h]
\centering
\begin{tabular}{ccc}
\includegraphics[width=0.15\hsize]{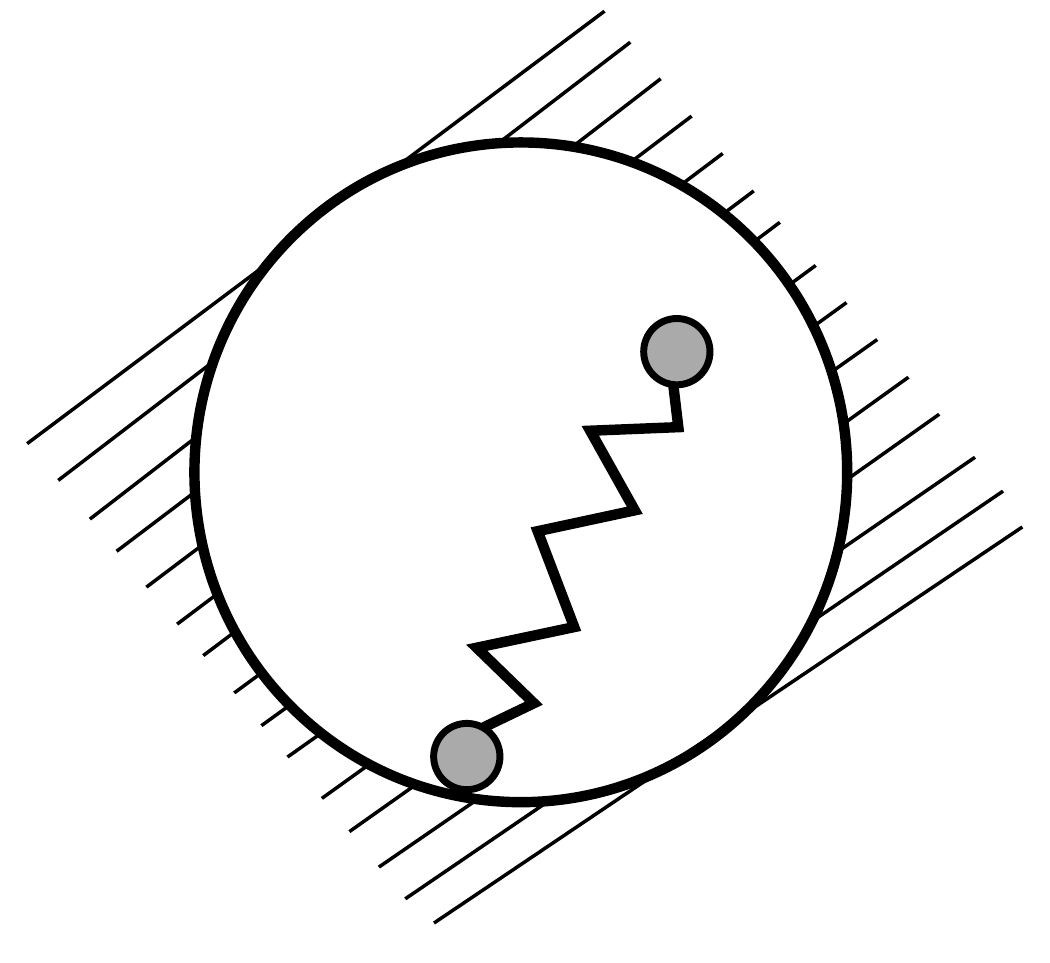} &
\includegraphics[width=0.4\hsize]{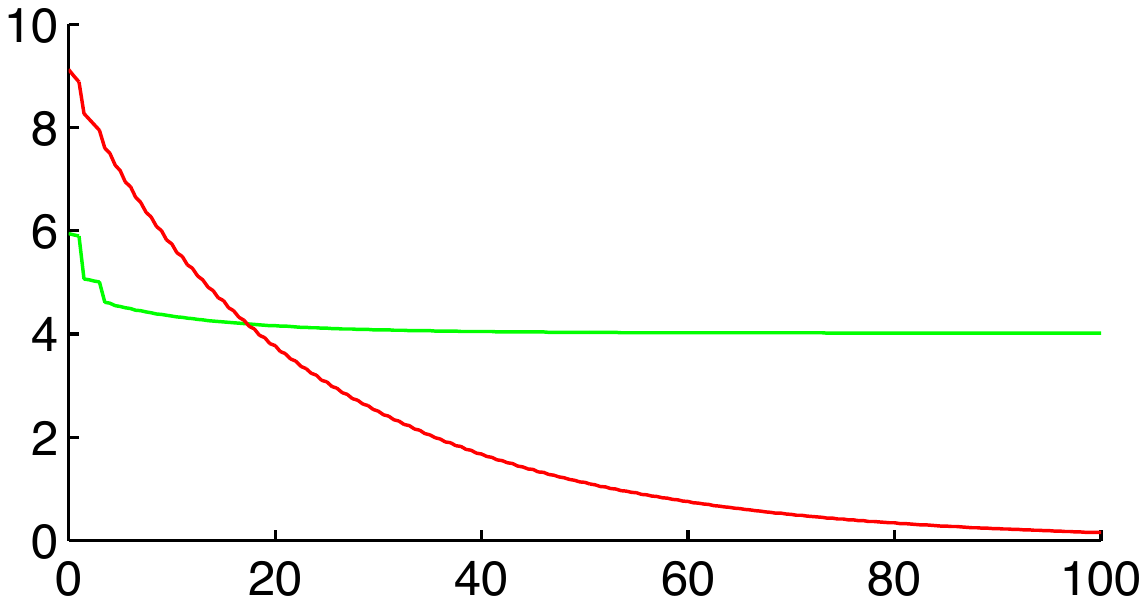}&
\includegraphics[width=0.4\hsize]{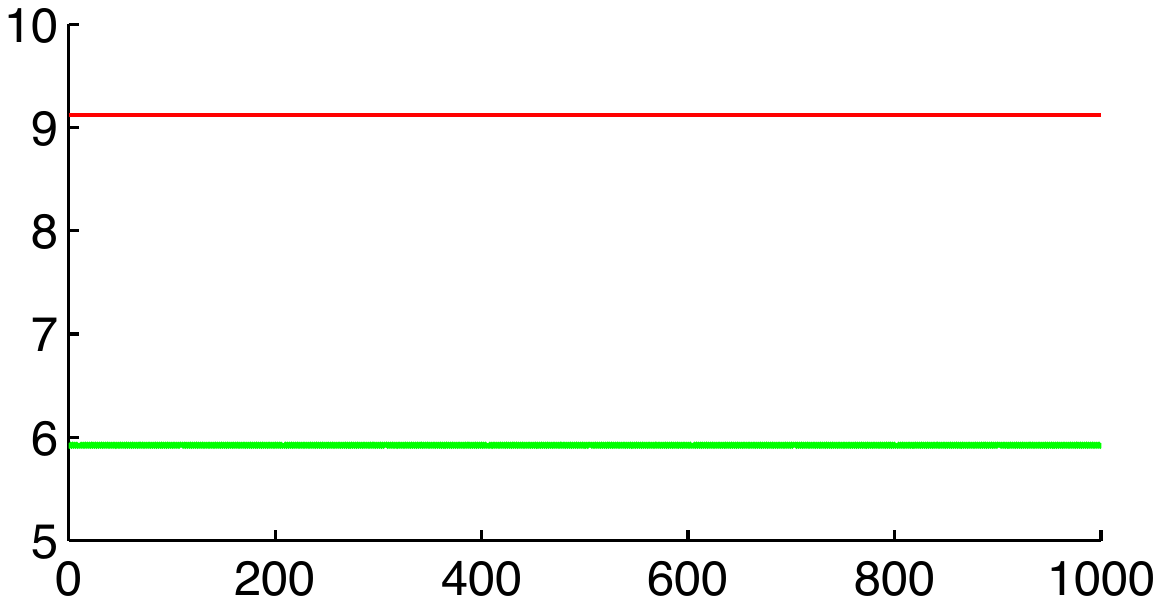}
\\ (a) & (b) & (c) 
\end{tabular}
\caption{
We consider the combined smooth \emph{and} nonsmooth trajectory of a constrained system composed of the same simple constrained mass-spring system considered in \S\ref{sec:smooth_ex} and Figure \ref{fig:smooth_sphere}.  Unlike the previous example, however, we now initialize the first particle to an off sphere boundary position and set the second particle to an on-boundary position as illustrated in (a). As in \S\ref{sec:smooth_ex} both particles are initialized with corresponding unit length momenta that point counter-clockwise and tangent to the boundary. This initiates a counter-clockwise rotation for both particle with an oscillatory motion between them via forces applied by the spring. The first particle, however, should now begin a nonsmooth oscillatory bouncing off the surface, while the trajectory of the second particle should remain smooth and on boundary throughout the simulation. Otherwise all other physical parameters remain unchanged from the last example.
In (b) we plot the energy (in green) and the angular momentum (in red) of the direct-substitution, end-point constrained, method for the first 100 units of simulation time. We note the dissipation in both angular momentum and energy. This includes a rapid  decay of all nonsmooth bouncing modes. In (c) we similarly plot the (approximate) preservation of energy (in green) and (exact) conservation of angular momentum (in red) obtained by GVI out to 1000 units of simulation time. See \S \ref{sec:smoothNonsmooth_ex} for further discussion.  
\label{fig:smoothNonsmooth}}
\end{figure}

\subsection{Smooth and Nonsmooth, Spring and Sphere}
\label{sec:smoothNonsmooth_ex}

As a first consideration of an example with combined smooth \emph{and} nonsmooth boundary modes, we reconsider our spring and sphere example from \S\ref{sec:smooth_ex}. In this example we leave all physical and time-step parameters unchanged from from \S\ref{sec:smooth_ex} \emph{except} that we now initialize the example system to a combined smooth \emph{and} nonsmooth, rotational trajectory. We initialize the first particle to an off sphere boundary position at $\vc q_1(0) = (4,-1)$. We then set the second particle to an on boundary position, $\vc q_2(0) = (3,-4)$. See Figure \ref{fig:smoothNonsmooth}(a). As in \S\ref{sec:smooth_ex} both particles are initialized with corresponding unit length momenta that point counter-clockwise and tangent to the boundary. This initiates a counter-clockwise rotation for both particles with an oscillatory motion between them via forces applied by the spring. The first particle, however, should now begin a nonsmooth oscillatory bouncing off the surface, while the trajectory of the second particle should remain smooth and on boundary throughout the simulation. 

In Figure \ref{fig:smoothNonsmooth} (b) we plot the  energy (in green) and the angular momentum (in red) of the direct, end-point constrained method for the first 100 units of simulation time. We note that decay in both angular momentum and energy for the direct method corresponds to the destruction of both the nonsmooth \emph{and} smooth modes of the particles, as well as the rotational and oscillatory modes of the system. In Figure \ref{fig:smoothNonsmooth} (c) we similarly plot the (approximate) preservation of energy, in green, and the (exact) conservation of angular momentum, in red, obtained by our GVI method out to 1000 units of simulation time. Throughout this simulation, GVI maintains the nonsmooth bouncing mode of the first particle, the smooth-on-boundary mode of the second particle, as well as the rotational and oscillatory modes of the full system.

\subsubsection{Newton's Cradle}

\begin{figure}
[h]
\centering
\begin{tabular}{cc}
\includegraphics[width=0.8\hsize]{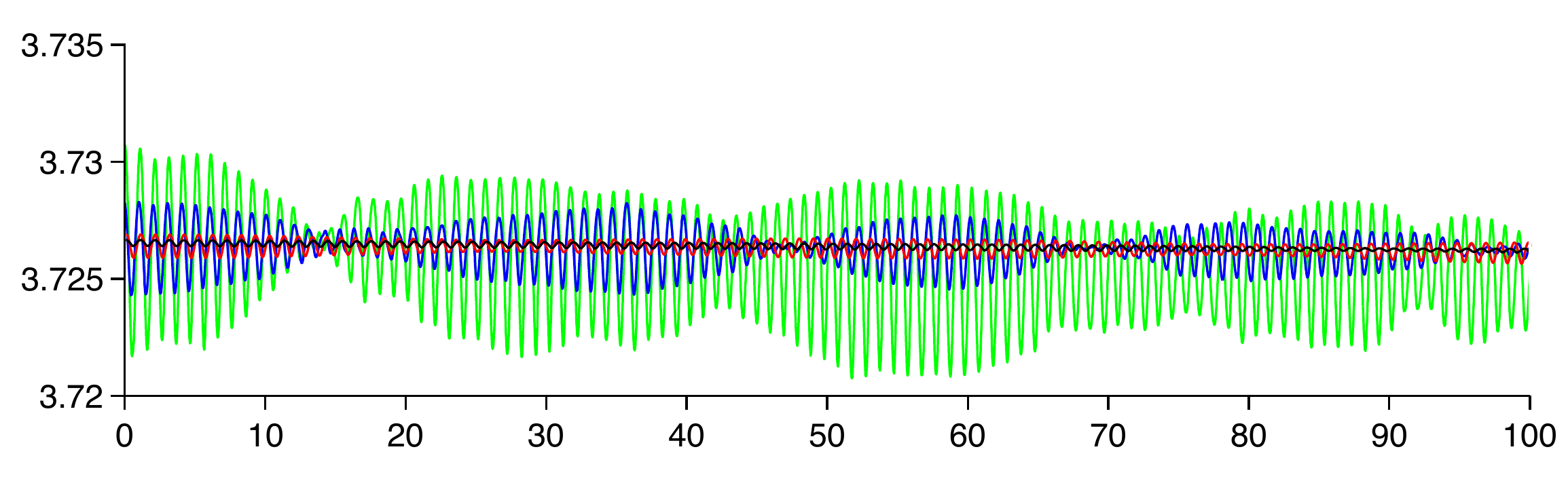} &
\includegraphics[height=0.26\hsize]{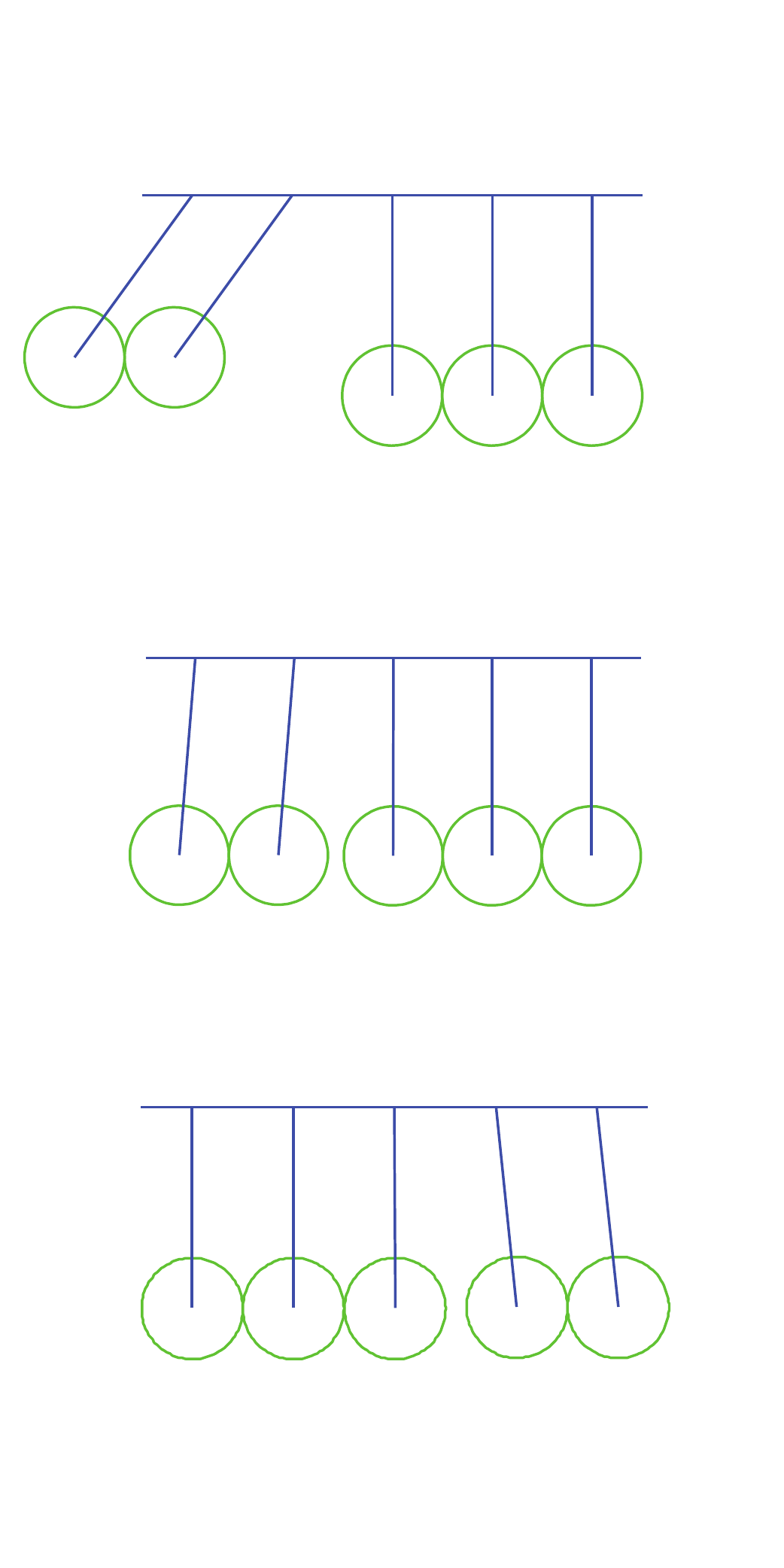}
\\ (a) & (b)
\end{tabular}
\caption{{\bf Newton's Cradle Example:} In (a) we compare the results of our implicit-midpoint-based GVI method applied to a Newton's Cradle System over a range of decreasing step sizes, $h = 3 \times 10^{-2}$ (green), $2 \times 10^{-2}$ (blue),  $10^{-2}$ (red), and $5 \times 10^{-3}$ (black). We note that in the passage from $h = 3 \times 10^{-2}$ to $h = 10^{-2}$, we obtain the classic transition to stable energy behavior that is characteristic of symplectic methods. In (b) we illustrate the behavior of the integrator with a series of snapshots from the simulation. \label{fig:newtons}}
\end{figure}

Next we examine an example with combined smooth \emph{and} nonsmooth boundary modes
\emph{and} the simultaneous enforcement of both equality and inequality constraints. We consider the classic, pendulum-based, Newton's Cradle system composed of $n$ initially touching spheres subject to gravity. Each sphere $i \in [1,n]$ is constrained by a corresponding equality constraint 
\begin{align}
\vc f_i(\vc q) = \parallel \vc q_i - \vc p_i \parallel - \> \> \vr l = 0,
\end{align}
to remain at constant length $\vr l$ from its hanging attachment point $\vc p_i$. Neighboring pairs $i,j$ of spheres are then further constrained by non-overlap constraints 
\begin{align}
\vc g_{i,j}(\vc q) = \parallel \vc q_i - \vc q_j \parallel - r_i - r_j \geq 0.
\end{align}
Thus, for an $n$-sphere cradle we enforce $n$ bilateral and $n - 1$ unilateral constraints and obtain a combination of smooth motion along the constraint boundary interfaces between neighboring spheres in moving and resting clusters, holonomic motion along the manifolds given by the pendulum constraints, and nonsmooth motion at each sphere impact.  

For this example we begin with implicit-midpoint as the base, unconstrained numerical method. 
We then set the masses, sphere radii, and pendulum lengths to $m = 1$, $r = 0.25$ and $\vr l = 1$, respectively, for all spheres. We examine a five-ball cradle. The starting configuration is given such that all but the two leftmost spheres are at rest and in contact. The two leftmost spheres are initially pulled back into a non-equilibrium, contacting position, at an angle of $-\pi/5$ from rest. The system should then enter into the characteristic Newton's cradle behavior as illustrated by the simulation snapshots shown in Figure \ref{fig:newtons} (b).

We compare the results of 
GVI over a range of decreasing step sizes.  
In Figure \ref{fig:newtons} (a) we plot the obtained energy for  $h = 3 \times 10^{-2}$, $2 \times 10^{-2}$,  $10^{-2}$, and $5 \times 10^{-3}$. We note that in the passage from $h = 3 \times 10^{-2}$ to $h = 10^{-2}$ we obtain the classic transition to stable energy behavior that is characteristic of symplectic methods. Furthermore, as step size decreases, the cradle preserves the correct cyclical behavior, expected from Newton's cradle, for increasingly longer periods. Finally, we note that both the direct methods, and the hybrid schemes, along with generating their expected energy errors, do not predict the correct physical behavior of Newton's Cradle.

\begin{figure}
[h]
\centering
\begin{tabular}{ccc}
\includegraphics[height=0.25\hsize]{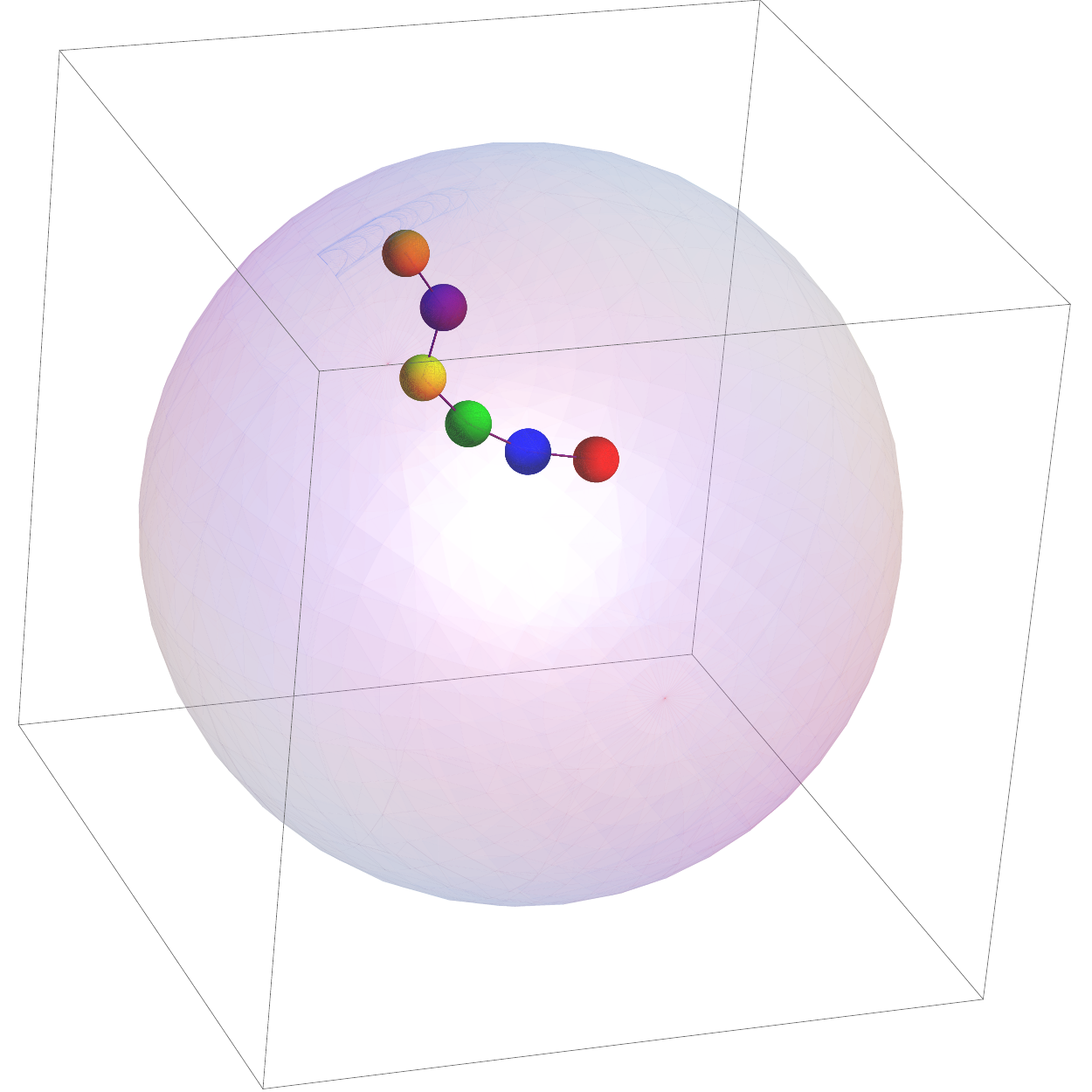} &
\includegraphics[height=0.25\hsize]{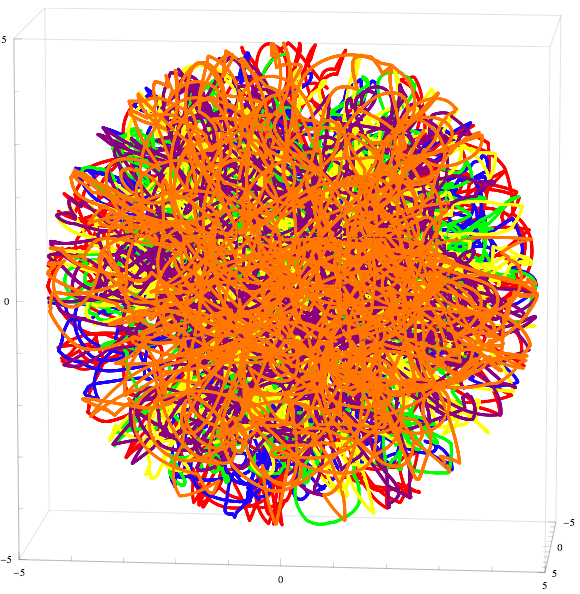} &
\includegraphics[height=0.25\hsize]{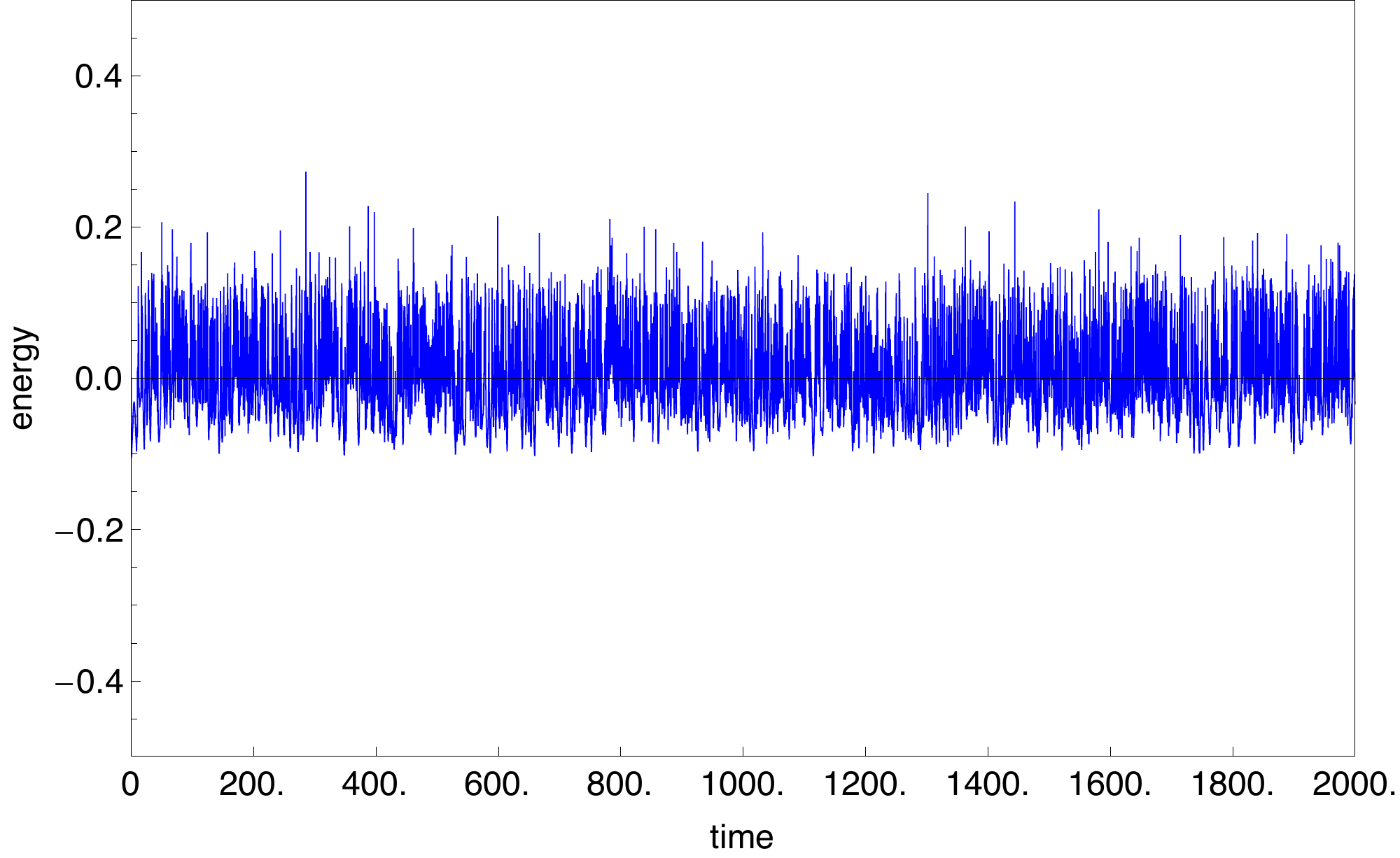}
\\ (a) & (b) & (c) 
\end{tabular}
\label{fig:LJ}
\caption{{\bf Lennard-Jones Example:} In these figures we examine the behavior of  St\"{o}rmer-Verlet-based GVI on a system of six particles in $\mathbb{R}^3$ connected sequentially by rods and interacting in a Lennard-Jones potential.
In {\bf (a)} we take a snapshot of the system configuration, {\bf (b)} traces out the entire trajectory of all particle centers over $2000$ units of simulation time, stepped at $h=10^{-2}$,  while {\bf (c)} plots the change in energy over the same trajectory.}
\end{figure}

\subsubsection{ Interacting Constrained Particles under Lennard-Jones}

Finally, motivated by potential applications in Molecular Dynamics~\citep{Frenkel01}, we consider the example of six spheres in $\mathbb{R}^3$, i.e., $\vc q = (\vc q_1^T, ..., \vc q_6^T)^T \in \mathbb{R}^{18}$,  connected sequentially by bilateral rod constraints, 
\begin{align}
\vc f_{i}(\vc q) = \parallel \vc q_i - \vc q_{i+1} \parallel - \> \> l = 0.
\end{align}
Sphere-pairs are constrained by non-overlap conditions,
\begin{align}
\vc g_{i,j}(\vc q) = \parallel \vc q_i - \vc q_j \parallel - 2 r \geq 0,
\end{align} 
interact in a Lennard-Jones potential,
\begin{align}
V(\vc q) = \sum_{i \neq j} 4 \epsilon \Big[ \Big(\frac{\sigma}{\parallel \vc q_i -\vc q_j \parallel} \Big)^{12} -  \Big( \frac{\sigma}{\parallel \vc q_i -\vc q_j \parallel} \Big)^6 \Big],
\end{align}
and are further constrained to lie in the interior of a bounding sphere of radius $r_B$ so that additionally,
\begin{align}
\vc g_{i,b}(\vc q) = -\parallel \vc q_i \parallel -  r  + r_b \geq 0.
\end{align} 
In our simulation we employ reduced time units, $(m \sigma^2/\epsilon)^{1/2}$,  with the mass of all particles set to unity, $\sigma/2 = r = 0.3$, $r_b= 5$, and $l = 1$. Figure \ref{fig:LJ} (a) shows a snapshot of the system configuration. Figure \ref{fig:LJ} (b) traces out the entire trajectory computed by St\"{o}rmer-Verlet-based GVI of all particle centers over $2000$ units of simulation time, stepped at $h=10^{-2}$,while Figure \ref{fig:LJ} (c) plots the change in energy over the same trajectory.

\section{Discussion and Conclusion}

We have introduced a fully nonsmooth, discrete Hamilton's Principle and an associated DELI integration formulation that enables the generation of VI-based, geometric methods for the numerical integration of generalized inequality/equality-constrained, nonsmooth Hamiltonian systems. Adding additional structure to this framework we have obtained the first family of DELI-based integration methods: GVIs. By construction the GVI methods proposed here preserve momentum and equality constraints. They enforce simultaneous inequality constraints, obtain smooth unilateral motion along constraint boundaries and allow for both nonsmooth and smooth boundary approach and exit trajectories. Finally, we have shown that the proposed GVIs are symplectic over smooth-trajectory intervals and are observed to conserve energy approximately throughout.

We have validated these properties in our numerical experiments and compared them against 
corresponding
direct-substitution and collision-integration schemes in the literature. We have shown that our proposed methods resolve systems subject to multiple persistent, frequent and/or simultaneously active constraints. We have further tested these methods on difficult scenarios, where both smooth and nonsmooth active constraint modes occur simultaneously with high frequency; generally, in many of the examples presented above, during the majority of time steps. 

In many of the above examples, potentially difficult, nonlinear, numerical optimization problems were solved.  In particular, we note that the complementarity conditions in our Discrete Smooth Integrator are, unlike more standard KKT-type complementarity conditions, asymmetric with respect to constraint and constraint gradient evaluations. As such, equations (\ref{eq:ds1}) -- (\ref{eq:ds4}), do not correspond to the optimality conditions of standard nonlinear optimization problems~\citep{Bertsekas} and instead require customized numerical solution strategies. In this work we have focused on investigating the challenges of inequality-constrained, numerical integration and then on developing and validating our proposed methods.  In future work we will explore efficient computational techniques for the scalable solution of these DELI-derived optimization problems. In practice, however, for the numerical experiments discussed here, we have found that relatively simple customized approaches were possible that allowed for efficient solutions, while preliminary experiments investigating the possibility of extending these approaches to larger-scale problems are promising.

Further, we look forward to examining the potential trade-offs between energy behavior and various degrees of constraint enforcement guarantees. We also expect that a rigorous constraint error analysis, based on choice of predictor, should be a useful complement to the GVI methods proposed here. Finally, we note that GVI methods are the first of many possible instantiations of DELI methods. The geometric integration of inequality-constrained systems clearly remains a challenging area of investigation and we hope that the DELI framework will lead to further fruitful explorations, and the development of new complementary numerical methods.

\bibliographystyle{plainnat}
\bibliography{/Users/danny/Desktop/bib/bib}

\appendix

\section{The Discrete Generalized Reflection Operator}
\label{sec:discrete_refl}

For an arbitrary choice of energy function, $E(\vc q, \vc p)$, we invoke the variational model from \citet{KaufmanPaiGrinspun10}, in which a generalized reflection operator is interpreted as a sequence of energy preserving, constraint satisfying projections.  Following their time-continuous case derivation we then generate the corresponding discrete generalized reflection analogue as the discrete, energy preserving, constraint satisfying projection sequence defined in Algorithm 2 below.

\begin{algorithm}
\caption{${\tt Discrete\_Generalized \_ Reflection}(\vc q, \vc p, \mathbb{K}, E)$} \label{alg:jump}
\begin{algorithmic}[1]
\While{{true}}
\State $\mathbb V \leftarrow \emptyset$
\For{$ k \> \> \> \text{in} \> \> \> \mathbb{K}$}
\If{$\nabla g_{_k}(\vc q)^T \vc M^{-1} \vc p < 0$ } 
\State $\mathbb V \leftarrow k$
\EndIf
\EndFor
\If{$\mathbb V \neq \emptyset$}
\State $\lambda \leftarrow  \Big\{ \vc y : \> \> \>  E(\vc q, \vc p + \vc G_{\mathbb V}(\vc q) \vc y) =  E(\vc q, \vc p), \> \> \> \vc G_{\mathbb V}(\vc q)^T \vc M^{-1} \big( \vc p + \vc G_{\mathbb V}(\vc q) \vc y \big) \geq 0, \> \> \> \vc y \geq 0  \Big\}$
\State $\vc p \leftarrow \vc p + \vc G_{\mathbb V}(\vc q) \lambda$
\Else 
\State \Return $\vc p$
\EndIf
\EndWhile
\end{algorithmic}
\end{algorithm}

\begin{theorem}
At termination, Algorithm 2 satisfies all discrete jump conditions. 
\end{theorem}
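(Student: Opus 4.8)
The plan is to read Algorithm 2 as producing a finite sequence of momentum iterates $\vc p^- = \vc p_0, \vc p_1, \dots, \vc p_K = \vc p^+$, where $\vc p_{j+1} = \vc p_j + \vc G_{\mathbb{V}_j}(\vc q)\lambda_j$ is the update performed in the $j$-th pass of the \texttt{while} loop (with $\mathbb{V}_j \subseteq \mathbb{K}$ the violated set detected in that pass, $\lambda_j \geq 0$, and $\vc q$ held fixed throughout), and $\vc p_K$ is the vector returned, which is reached precisely because $\mathbb{V}_K = \emptyset$. Against this sequence I would establish two loop invariants by induction on $j$ and then read the kinematic feasibility condition off the exit test.

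First, the energy invariant $E(\vc q, \vc p_j) = E(\vc q, \vc p^-)$. For the inductive step, the multiplier $\lambda_j$ used in line 9 is drawn from the solution set that explicitly imposes $E(\vc q, \vc p_j + \vc G_{\mathbb{V}_j}(\vc q)\lambda_j) = E(\vc q, \vc p_j)$, so $E(\vc q, \vc p_{j+1}) = E(\vc q, \vc p_j) = E(\vc q, \vc p^-)$; at $j=K$ this is exactly (\ref{eq:discr_jump3}). Here I would note that well-posedness of line 9 --- nonemptiness and single-valuedness of that set --- is precisely what the variational construction of \citet{KaufmanPaiGrinspun10} supplies: each pass is an energy-preserving projection onto the cone $\{\vc y : \vc G_{\mathbb{V}_j}(\vc q)^T \vc M^{-1}(\vc p_j + \vc G_{\mathbb{V}_j}(\vc q)\vc y) \geq 0,\ \vc y \geq 0\}$, and I would cite \S\ref{sec:discrete_refl} for this.

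Second, the inclusion invariant $\vc p_j - \vc p^- = \vc G_{\mathbb{K}}(\vc q)\,\zeta_j$ for some $\zeta_j \geq 0$: extending $\lambda_j$ by zeros on $\mathbb{K}\setminus\mathbb{V}_j$ turns it into a nonnegative vector $\tilde\lambda_j \geq 0$ indexed by $\mathbb{K}$ with $\vc G_{\mathbb{V}_j}(\vc q)\lambda_j = \vc G_{\mathbb{K}}(\vc q)\tilde\lambda_j$, so $\zeta_{j+1} = \zeta_j + \tilde\lambda_j \geq 0$ closes the induction. At $j = K$, setting $\lambda = \zeta_K = \sum_{j<K}\tilde\lambda_j \geq 0$ gives $\vc p^+ - \vc p^- = \vc G_{\mathbb{K}}(\vc q)\lambda$ with $\lambda \geq 0$, which is (\ref{eq:discr_jump2}). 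Finally, termination occurs only with $\mathbb{V}_K = \emptyset$, i.e. $\nabla g_k(\vc q)^T \vc M^{-1}\vc p^+ \geq 0$ for every $k \in \mathbb{K}$; stacking these scalar inequalities over $k \in \mathbb{K}$ and substituting $\vc p^+ = \vc p^- + \vc G_{\mathbb{K}}(\vc q)\lambda$ yields $\vc G_{\mathbb{K}}(\vc q)^T \vc M^{-1}(\vc p^- + \vc G_{\mathbb{K}}(\vc q)\lambda) \geq 0$, which is (\ref{eq:discr_jump1}).

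The one genuinely delicate point is finiteness of the loop: the statement is phrased ``at termination'', but a self-contained argument should justify that the \texttt{while} loop halts. I would handle this by transporting the finiteness argument of \citet{KaufmanPaiGrinspun10} to the discrete setting --- each pass is a projection on the compact energy level set $\{\vc p : E(\vc q,\vc p) = E(\vc q,\vc p^-)\}$ (compactness from the quadratic-in-$\vc p$ growth of the Hamiltonians considered), and successive violated sets are exhausted in a monotone, cyclic-projection sense, so only finitely many reflections are required. Everything else above is bookkeeping on the two loop invariants; this termination/reachability analysis, together with the nonemptiness of the line-9 projection that underlies it, is where the real content sits and where I would lean most heavily on the cited construction.
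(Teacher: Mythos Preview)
Your proposal is correct and follows essentially the same approach as the paper: the paper's proof is simply the terse two-sentence version of your loop-invariant argument, observing that each iteration preserves (\ref{eq:discr_jump2}) and (\ref{eq:discr_jump3}) by construction while termination delivers (\ref{eq:discr_jump1}). Your added discussion of finiteness of the loop and well-posedness of line~9 goes beyond what the statement requires, since the theorem is explicitly conditional on termination.
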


\begin{proof}
By construction, each iteration satisfies conditions (\ref{eq:discr_jump2}) and (\ref{eq:discr_jump3}). Termination guarantees condition (\ref{eq:discr_jump1}) .
\end{proof}

When the energy function is given by the separable continuous Hamiltonian, e.g., $E = H$, the discrete generalized reflection operator is identical to the time-continuous case considered in \citet{KaufmanPaiGrinspun10}. In this case each solve of line 9, in the above pseudo-code, is obtained by the non-negative, least-squares solution of the normal equation:
\begin{align}
 \vc G_{\mathbb V}(\vc q)^T \vc M^{-1}  \vc G_{\mathbb V}(\vc q) \lambda = -2  \vc G_{\mathbb V}(\vc q)^T \vc M^{-1} \vc p, \quad \lambda \geq 0.
\end{align}
More generally, whenever the Energy function is quadratic in $\vc p$, e.g., as in the St\"{o}rmer-Verlet's second-order numerical Hamiltonian~\citep{Bond07}, the solution to line 9 is likewise given by a non-negative, linear least-squares system. Finally, for possible energy functions containing higher order polynomials in $\vc p$, more complex, variational treatments are required.  

Finally, we note that, as discussed above, GVI is agnostic to the choice of reflection model \emph{as long as} a \emph{well-posed} solution exists that satisfies the discrete jump conditions, (\ref{eq:discr_jump1}) - (\ref{eq:discr_jump3}). As in the case of the Generalized Reflection operator, alternate existing multi-impact models can be similarly discretized to do so. As a particularly compact example, Moreau's elastic multi-impact solution~\citep{Moreau88} can be discretized to obtain the energy preserving discrete reflection given in Algorithm 3 below.

\begin{algorithm}
\caption{${\tt Discrete\_Moreau \_ Reflection}(\vc q, \vc p, \mathbb{K}, E)$} \label{alg:m_jump}
\begin{algorithmic}[1]
\State $\lambda \leftarrow  \Big\{ \vc y : \> \> \>  E(\vc q, \vc p + \vc G_{\mathbb K}(\vc q) \vc y) =  E(\vc q, \vc p), \> \> \> \vc G_{\mathbb K}(\vc q)^T \vc M^{-1} \big( \vc p + \vc G_{\mathbb K}(\vc q) \vc y \big) \geq 0, \> \> \> \vc y \geq 0  \Big\}$
\State $\vc p \leftarrow \vc p + \vc G_{\mathbb K}(\vc q) \lambda$
\State \Return $\vc p$
\end{algorithmic}
\end{algorithm}

\section{Direct Substitution  Schemes} 
\label{sec:direct_methods_proof}

As discussed above, an extended value potential for inequality constrained systems can be composed by concatenating the unconstrained Hamiltonian system's potential with the extended value indicator function.
Following this observation, the \emph{direct-substitution} approach in symplectic methods~\citep{Kane99,Stewart00,Pandolfi02,Deuflhard07}
is to compose direct extensions by treating the nonsmooth constraint force, $-\partial I_{\vc A} (\vc q)$, as simply an additional force to be handled by each numerical method in the usual manner.

\subsection{Direct Integration Examples}
\label{sec:directEx}
To illustrate this approach, in the following, we will present examples using two popular integration approaches: the implicit midpoint method and the Newmark family of integration algorithms.

\subsubsection{Implicit Midpoint Method}   

The implicit midpoint method is given by the discrete, momentum, phase-space pair
\begin{align}
\vc q^{t+1} &= \vc q^{t} + \frac{h}{2} \vc M^{-1} ( \vc p^{t+1} + \vc p^t), \\
\vc p^{t+1} &= \vc p^t - h \nabla V(\frac{ \vc q^{t+1} + \vc q^{t}}{2}).
\end{align}

A direct-substitution, nonsmooth-constrained extension to the midpoint rule is generated by adding the extended value indicator to the unconstrained system's potential. Applying a generalized gradient, this gives  

\begin{align}
\label{eq:standard_nonsmooth_midpoint1}
\vc q^{t+1} &= \vc q^{t} + \frac{h}{2}  \vc M^{-1} ( \vc p^{t+1} + \vc p^t), \\
\label{eq:standard_nonsmooth_midpoint2}
\vc p^{t+1} &\in \vc p^t - h \nabla V(\frac{ \vc q^{t+1} + \vc q^{t}}{2}) - h \partial I_{\vc A} (\frac{ \vc q^{t+1} + \vc q^{t}}{2}) .
\end{align}

An alternate, end-point constrained variant of implicit midpoint can be obtained by simply evaluating constraint forces at the end of the time-step~\citep{Stewart00}. This leads to the modified midpoint method momentum update 
\begin{align}
\label{eq:standard_nonsmooth_midpoint3}
\vc p^{t+1} &\in \vc p^t - h \nabla V(\frac{ \vc q^{t+1} + \vc q^{t}}{2}) - h \partial I_{\vc A} (\vc q^{t+1} ).
\end{align}

\subsubsection{Newmark Methods}

The Newmark family of integration schemes are given by the discrete, velocity, phase-space pair

\begin{align}
\label{eq:newmark_1} \vc q^{t+1} &= \vc q^{t} + h \dot{\vc q}^{t} -
\frac{h^2}{2} \Bigl[ \bigl(1 - 2\beta \bigr) \> \> \vc M^{-1}\nabla V({\vc q}^{t})
+2 \beta \> \> \vc M^{-1} \nabla V({\vc q}^{t+1}) \Bigl],\\
\label{eq:newmark_2} \dot{\vc q}^{t+1} &= \dot{\vc q}^{t} - h \Bigl[
\bigl(1 - \gamma \bigr)\> \> \vc M^{-1} \nabla V({\vc q}^{t}) + \gamma \> \>
\vc M^{-1} \nabla V({\vc q}^{t+1}) \Bigl].
\end{align}
Recall that Newmark methods vary $\beta \in [0, \frac{1}{2}]$ and $\gamma \in [0,1]$.
Newmark is second order accurate if and only  if $\gamma = \frac{1}{2}$, otherwise it is only consistent. Implicit Newmark with $\beta = \frac{1}{4}$ and $\gamma = \frac{1}{2}$ gives the implicit trapazoidal method,
while $\beta = 0$ and $\gamma = \frac{1}{2}$ gives explicit Newmark.

A direct-substitution, nonsmooth-constrained extension of the Newmark Family is generated by adding the extended value indicator to the unconstrained system's potential. Applying a generalized gradient, this gives  
\begin{align}
\label{eq:newmark_1_nonsmooth} \vc q^{t+1} &\in \vc q^{t} + h \dot{\vc q}^{t} -
\frac{h^2}{2} \Bigl[ \bigl(1 - 2\beta \bigr) \> \> \vc M^{-1} \Big( \nabla V({\vc q}^{t}) + \partial I_{\vc A}({\vc q}^{t}) \Big)
+2 \beta \> \> \vc M^{-1} \Big( \nabla V({\vc q}^{t+1}) + \partial I_{\vc A}({\vc q}^{t+1})  \Big) \Bigl],\\
\label{eq:newmark_2_nonsmooth} \dot{\vc q}^{t+1} &\in \dot{\vc q}^{t} 
- h \Bigl[\bigl(1 - \gamma \bigr)\> \> \vc M^{-1} \Big( \nabla V({\vc q}^{t}) + \partial I_{\vc A}({\vc q}^{t}) \Big) + \gamma \> \>
\vc M^{-1} \Big( \nabla V({\vc q}^{t+1}) + \partial I_{\vc A}({\vc q}^{t+1}) \Big) \Bigl].
\end{align}

An Implicit/Explicit (IMEX) variant of this extension was developed by \citet{Kane99} in which both $\beta$ and $\gamma$ are enforced as fully implicit values (i.e., $\beta = \frac{1}{2}, \gamma = 1$) for \emph{just} the nonsmooth portion of the composite potential. This generates a family of  nonsmooth IMEX Newmark integrators given by 

\begin{align}
\label{eq:newmark_1_kane} \vc q^{t+1} &\in \vc q^{t} + h \dot{\vc q}^{t} -
\frac{h^2}{2} \Bigl[ \bigl(1 - 2\beta \bigr) \> \> \vc M^{-1}\nabla V({\vc q}^{t})
+2 \beta \> \> \vc M^{-1} \nabla V({\vc q}^{t+1}) \Bigl]- \frac{h^2}{2} \vc M^{-1}  \partial I_{\vc A}({\vc q}^{t+1}),\\
\label{eq:newmark_2_kane} \dot{\vc q}^{t+1} &\in \dot{\vc q}^{t} 
- h \Bigl[\bigl(1 - \gamma \bigr)\> \> \vc M^{-1} \nabla V({\vc q}^{t})+ \gamma \> \>
\vc M^{-1} \nabla V({\vc q}^{t+1}) \Bigl] 
- h  \vc M^{-1}  \partial I_{\vc A}({\vc q}^{t+1}).
\end{align}

\subsection{Variational Interpretation of Direct Nonsmooth Integrators: Optimization Forms}

Given the explicitly variational definition of the generalized gradient~\citep{RockWets98}, direct, nonsmooth constrained extensions of symplectic methods, as described above, generally reduce to constrained, nonlinear minimizations of the form 
\begin{align} 
\label{eq:ref_min}
\vc q^{t+1} = \argmin_{\vc q} \Big\{ \> \> e(\vc q^t, \vc q, h) : \> \vc f(\vc q^t, \vc q, h) \in \vc A \> \Big\}.
\end{align}

Interpreted in this equivalent form, symplectic methods, applied to Hamiltonian systems, generate an objective function, $e$, generally polynomial in $\vc q$, and a vector-valued, constrained configuration function, $\vc f$, linear in $\vc q$.  

\subsubsection{Implicit Midpoint Example}  

As an example, consider the nonsmooth midpoint rule. We substitute Equation (\ref{eq:standard_nonsmooth_midpoint2}) into  (\ref{eq:standard_nonsmooth_midpoint1}) to obtain the DI 
\begin{align}
\label{eq:optimization_nonsmooth_midpoint}
\vc q^{t+1}  \in  \vc q^t + h \vc M^{-1} \vc p^t - \frac{h^2}{2} \vc M^{-1} \nabla V(\frac{\vc q^t + \vc q^{t+1}}{2}) - \frac{h^2}{2}  \vc M^{-1} \partial I_{\vc A} (\frac{\vc q^t + \vc q^{t+1}}{2}). 
\end{align}
Applying the definition of the generalized gradient, we then obtain an equivalent local, nonlinear minimization that corresponds to the template given by (\ref{eq:ref_min}),
\begin{align}
\vc q^{t+1} = \argmin_{\vc q} \Bigl\{ \frac{1}{2}   \vc q^T \vc M \> \vc q - \vc q^T \big( \vc q^t + h \vc M^{-1} \vc p^t \big) + h^2 V(\frac{\vc q^t + \vc q}{2}) : (\frac{\vc q^t + \vc q}{2}) \in \vc A \Bigr\}.
\end{align}
Rearranging (\ref{eq:standard_nonsmooth_midpoint1}) the corresponding midpoint method, momentum update is then given by 
\begin{align}
\vc p^{t+1} =  \frac{2}{h} \vc M ( \vc q^{t+1} - \vc q^{t} )  - \vc p^t.
\end{align}
Newmark and other direct methods follow similarly.

\subsection{Direct Method Behavior}

Unfortunately, many of the characteristic advantages of symplectic methods are lost in the above direct nonsmooth treatment. The chief observation is, in the smooth setting, the good 
behavior of symplectic methods is ascribed, via backwards error analysis, to the existence of trajectory shadowing \emph{numerical} Hamiltonians~\citep{Hairer03}. More specifically, the flow of fixed-step, symplectic methods follow, up to truncation, a numerical Hamiltonian system that is constructed via a series expansion. Because the numerical Hamiltonian is obtained under a smoothness assumption, which does not hold in the nonsmooth case, standard backwards error analysis guarantees no longer hold for the inequality constrained systems~\citep{Stewart00,Bond07}.

Despite these issues, during free motion (i.e., when constraints are inactive), these nonsmooth constrained symplectic methods trivially reduce to their corresponding unconstrained method and are thus still guaranteed to shadow numerical Hamiltonians. At each constraint boundary event, however, this guarantee is lost, and the solution obtained no longer matches the shadow trajectory. 

In practice, it is exactly at these boundary events that such direct methods obtain a wide variety of incorrect and undesirable behaviors.  Constraint enforcement generates dissipative trajectories for some direct methods, energy growth in others, and in many cases produces effectively nondeterministic restitution behavior both when varying time-step sizes across simulations and within a single fixed step-size simulation~\citep{Stewart00}. 

\subsection{No Energy Conservation}
\label{sec:no_energy_conservation}
Considering (\ref{eq:ref_min}) we, in particular, note that, perhaps surprisingly, a large reason for these difficulties stems directly from the underlying variational structure of these methods. 

\begin{theorem}
All direct, one-step, nonsmooth symplectic methods that can be posed in the form of (\ref{eq:ref_min}), can not, in general, preserve, either approximately, nor exactly, the energy of the underlying unconstrained symplectic method. 
\end{theorem}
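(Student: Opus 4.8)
The plan is to prove this negative result through the minimal counterexample already used in the paper — the one-degree-of-freedom particle subject to gravity and a single wall constraint $g(\vc q)=\vc q\ge 0$ — and to show that \emph{no} choice of the (polynomial) objective $e$ and (affine) constrained-configuration map $\vc f$ in (\ref{eq:ref_min}) conserves this system's energy across a boundary crossing. First I would reduce the problem to boundary events: during free motion every method of the form (\ref{eq:ref_min}) coincides exactly with the underlying unconstrained symplectic scheme, whose energy behaviour (exact: never; approximate: via backward error analysis) is already understood. Hence the constrained method preserves energy, exactly or approximately, along a trajectory if and only if it does so across every step on which a constraint activates, so it suffices to analyse a single such ``collision'' step.

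Next I would isolate the structural obstruction. On a step whose free update is infeasible, the minimizer in (\ref{eq:ref_min}) must lie on the boundary of the feasible set $\{\vc q:\vc f(\vc q^t,\vc q,h)\in\vc A\}$; in the single-active-constraint case this set is a half-space, and because $\vc f$ is affine in $\vc q$ and the quadratic part of $e$ dominates for small $h$, the minimizer is pinned at the unique boundary point $\vc q^{t+1}=c(\vc q^t,h)$, a fixed function of $(\vc q^t,h)$ that is \emph{independent of the incoming momentum $\vc p^t$ and of the choice of $e$}. Substituting this back into the method's momentum-recovery relation expresses $\vc p^{t+1}$ as a polynomial — affine, $\vc p^t\mapsto A(\vc q^t,h)\,\vc p^t+B(\vc q^t,h)$, for all the standard methods — function of $\vc p^t$. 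Energy conservation across the step would force this map to coincide, on the whole interval of incoming momenta that trigger a collision at the given $\vc q^t$, with the exact elastic relation $|\vc p^{t+1}|^2=|\vc p^t|^2+2m\bigl(V(\vc q^t)-V(c(\vc q^t,h))\bigr)$. A polynomial identity on an interval is a polynomial identity, so matching coefficients yields $A^2=1$, $B\equiv 0$, $V(c(\vc q^t,h))=V(\vc q^t)$; and this last escape is closed off by taking $V$ strictly monotone near the wall (gravity), which forces $c(\vc q^t,h)=\vc q^t$ — a step that never moved the particle, contradicting that it was a collision step. Therefore $\Delta H\neq 0$ at collisions, ruling out exact conservation.

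To rule out even \emph{approximate} conservation I would then quantify the defect. Writing the crossing fraction $s=g(\vc q^t)\big/\bigl(h\,|\nabla g(\vc q^t)^T\vc M^{-1}\vc p^t|\bigr)\in(0,1)$, a short computation gives $\Delta H$ as a nonconstant function of $s$; for the endpoint-constrained midpoint rule, $\Delta H=2m^{-1}|\vc p^t|^2\,s(s-1)-\bigl(V(\vc q^t)-V(c)\bigr)$, with an analogous expression, and genuine energy \emph{growth} over two steps, for the midpoint-constrained rule. The quantity $s$ records only the uncontrolled alignment of the time grid with the constraint boundary: sweeping $h$ over any interval drives $s$ across all of $(0,1)$, so on a set of step sizes of positive measure $s$ remains bounded away from the elastic values and $|\Delta H|\ge\delta_0>0$ uniformly in $h$. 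Since approximate conservation would require $|\Delta H|\to 0$ as $h\to 0$, this settles the theorem.

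The step I expect to be the main obstacle is the uniformity claim of the second paragraph — that \emph{every} admissible pair $(e,\vc f)$, not merely the midpoint and Newmark instances, falls under the conclusion. The leverage comes from two rigidity facts that must be stated with care: (i) with a single active constraint the end-of-step configuration is dictated by the constraint boundary alone, independently of $e$, so the only residual freedom lives in the momentum map; and (ii) that momentum map is constrained by the symplectic/variational structure of the base method to be a fixed polynomial of bounded degree in $\vc p^t$, hence cannot reproduce the square-root (branch-point) dependence on $\vc p^t$ that an exact, energy-preserving reflection requires. Making (i)--(ii) precise — including the mild regularity hypothesis on $e$ (strict convexity of its dominant quadratic part) that places the constrained minimizer exactly on the boundary — is where the real work lies; the energy estimates themselves are routine.
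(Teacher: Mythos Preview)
Your proposal is correct and reaches the same conclusion, but the route differs substantially from the paper's. The paper argues structurally from the KKT system of~(\ref{eq:ref_min}): complementarity forces $g_i\big((1-\alpha)\vc q^t+\alpha\vc q^{t+1}\big)=0$ whenever $\lambda_i>0$, so the constraint-force magnitude is dictated entirely by the requirement that the evaluation point land on the boundary --- a geometric condition that is decoupled from energy and time-step size, and hence can perturb energy by an $O(1)$ amount. No example is computed; the proof stops at this qualitative observation.

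Your approach is instead a direct, quantitative counterexample. You fix the one-dimensional gravity-and-wall system, observe that in one dimension the active half-space pins $\vc q^{t+1}=c(\vc q^t,h)$ independently of $\vc p^t$ and of the objective $e$, feed this into the (affine-in-$\vc p^t$) momentum recovery, and match coefficients to show exact conservation would force $c=\vc q^t$; you then exhibit the explicit defect $\Delta H\sim 2m^{-1}|\vc p^t|^2\,s(s-1)$ in terms of the crossing fraction $s\in(0,1)$ to rule out approximate conservation. What this buys you is a concrete $O(1)$ lower bound on $|\Delta H|$ and a transparent explanation of the nondeterministic restitution seen in the paper's Figures~\ref{fig:1D_particle_gravity} and~\ref{fig:oscillator}: the error is governed by the grid--boundary phase $s$, not by $h$. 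What the paper's KKT argument buys is nominal generality --- it is phrased for arbitrary $(e,\vc f)$ and multiple constraints without specializing to one dimension --- though at the cost of leaving the final step (``arbitrarily large perturbation may be applied'') essentially asserted rather than derived.

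Your self-identified obstacle is real but not fatal. Point~(i) is immediate in the single-constraint, affine-$\vc f$ setting the paper stipulates. Point~(ii) need not be stated as ``polynomial of bounded degree'': for the variational one-step methods under discussion, the momentum recovery from $(\vc q^t,\vc q^{t+1})$ is either independent of $\vc p^t$ (e.g., $\vc p^{t+1}=D_2L_d$) or affine in it (midpoint, Newmark), and in either case your coefficient-matching argument on an interval of incoming momenta goes through verbatim. The ``square-root/branch-point'' remark is heuristic and unnecessary once you have the coefficient identity; I would drop it and simply close with $V(c)=V(\vc q^t)\Rightarrow c=\vc q^t$ under strict monotonicity.
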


\begin{proof}
For all such methods,  (\ref{eq:ref_min}) can be rewritten, with respect to the constraint functions, as
\begin{align}
\vc q^{t+1} = \argmin_{\vc q} \Big\{ \> \> e(\vc q^t, \vc q, h) : \> \vc g\Big( (1-\alpha)\vc q^t + \alpha \vc q \Big) \geq 0 \> \Big\},
\end{align}
for some fixed $\alpha \in (0,1]$.
 
We then note that we retrieve the solution of the underlying, unconstrained symplectic method by solving for $\vc q$ in 
\begin{align}
\nabla_{\scriptsize \vc q} e(\vc q^t, \vc q, h) = 0.
\end{align}
The constrained variant will return this solution for any step in which the unconstrained solution returns a non-violating configuration.
More generally, formulating the Karush-Kuhn-Tucker (KKT) first order optimality conditions~\citep{Bertsekas}, for the above minimization, we obtain, for optimal $\vc q^{t+1} = \vc q$,
\begin{align}
\nabla_{\scriptsize \vc q} e(\vc q^t, \vc q, h) - \alpha \nabla \vc g \Big( (1-\alpha)\vc q^t + \alpha \vc q \Big) \lambda &= 0,\\
0 \leq \lambda \perp \vc g \Big( (1-\alpha)\vc q^t + \alpha \vc q \Big) &\geq 0.
\end{align}
We then consider any time step in which an unconstrained solution violates constraints. The above KKT system implies that constraint forces are applied along constraint gradients and, more specifically, are of the form $\alpha \nabla \> \vc g \> \> \lambda, \> \> \> \lambda \geq 0$. 

The complementarity term of the above KKT system then additionally requires, component-wise, that a constraint force magnitude, 
applied along the gradient $\nabla g_i$, can be nonzero (and thus enforce the corresponding constraint, $g_i $) if and only if 
\begin{align}
g_i \Big( (1-\alpha)\vc q^t + \alpha \vc q^{t+1} \Big)  = 0. 
\end{align}
Thus constraining forces can only be applied along constraint gradients (to enforce feasibility) if the
position at which the constraint force is evaluated, $(1-\alpha)\vc q^t + \alpha \vc q^{t+1}$, 
lies \emph{exactly} on the corresponding constraint's boundary. Since these methods demand constraint enforcement by construction, they implicitly enforce this additional condition as a secondary constraint. In turn, this implicit constraint overrides the underlying symplectic structure of the base, unconstrained method. In particular, the magnitude of the corresponding constraint force, that achieves this secondary constraint, scales independently of time-step size and energy. Instead, these constraint force magnitudes scale with constraint geometry and state, so that an arbitrarily large perturbation may be applied to the discrete system's energy and momentum in order to place the configuration at which constraint forces are evaluated, $(1-\alpha)\vc q^t + \alpha \vc q^{t+1}$, on the admissible set boundary.
\end{proof}

\section{The Discrete-Smooth Integrator}
\label{sec:last}
We now show symplecticity and momentum conservation for the Discrete-Smooth Integrator (DSI) by proving Theorem 5.1.
\begin{proof}
As a preliminary, starting at time $t^+$, we first define an \emph{Oracle Set} composed of constraint indices  that will be active at the end of the current time step, $\mathbb{O}( \vc q^{t+1}) \defeq \{ i : g_i(\vc q^{t+1}) = 0\} $.  
The intersection of the oracle set with the \emph{Smooth Set},  $\mathbb{S}(\vc q^t,\vc p^{t^+})$, defined in (6.1), gives $\mathbb{I} \defeq \mathbb{O}( \vc q^{t+1}) \> \> \> \cap \> \> \> \mathbb{S}(\vc q^t,\vc p^{t^+})$.

Now consider the optimal $\lambda^-$ and $\mu$ that obtain $(\vc q^{t+1}, \vc p^{t+1})$ in (6.3) -- (6.6). We then observe that setting $\gamma = \lambda^-$ and $\delta = \mu$ we also satisfy the following equality constrained integration system
\begin{align}
\label{eq:ds1B}
 D_1L_d (\vc q^t, \vc q^{t+1}) + \vc p^t + \vc G_{\mathbb{I}}(\vc q^t) \gamma &= 0,\\
 \label{eq:ds2B}
\vc g_{\mathbb{I}}(\vc q^{t+1}) &= 0,\\
 \label{eq:ds3B}
\vc p^{t+1} &= D_2 L_d (\vc q^{t}, \vc q^{t+1}) + \vc G_{\mathbb{I}}(\vc q^{t+1}) \> \> \delta,\\
 \label{eq:ds4B}
\vc G_{\mathbb{I}}(\vc q^{t+1})^T \vc M^{-1} \vc p^{t+1} &= 0.
\end{align} 
In particular, plugging $\gamma = \lambda^-$ and $\delta = \mu$  into the above system gives \emph{exactly} the same $(\vc q^{t+1}, \vc p^{t+1}$) solution as the DSI system. 

Next we observe that Equations (\ref{eq:ds1}) through (\ref{eq:ds4}) give the position-momentum form of the equality constrained discrete Hamiltonian map of~\citet[(3.5.2a) -- (3.5.2d)] {MarsdenWest01A}. Symplecticity and momentum conservation then follow directly by noting that restricting DSI steps to constraints indexed in $\mathbb{S}(\vc q^t,\vc p^{t^+})$ guarantees initial position and momentum feasibility~\citep[(3.5.1)] {MarsdenWest01A} for the corresponding equality constrained discrete Hamiltonian maps, at all steps. 
\end{proof}

\end{document}